\numberwithin{equation}{section}
\newtheorem{thm}[equation]{Theorem}
\newtheorem{cor}[equation]{Corollary}
\newtheorem{lem}[equation]{Lemma}
\newtheorem{prop}[equation]{Proposition}
\newtheorem{qu}[equation]{Question}
\newtheorem{defn}[equation]{Definition}
\newtheorem{rem}[equation]{Remark}
\newtheorem{ex}[equation]{Example}
\newcommand{\corref}[1]{Corollary~\ref{#1}}
\newcommand{\defref}[1]{Definition~\ref{#1}}
\newcommand{\exref}[1]{Example~\ref{#1}}
\newcommand{\lemref}[1]{Lemma~\ref{#1}}
\newcommand{\propref}[1]{Proposition~\ref{#1}}
\newcommand{\thmref}[1]{Theorem~\ref{#1}}
\newcommand{\ol}{\overline}
\newcommand{\ra}{\rightarrow}
\newcommand{\invs}{^{-1}}
\newcommand{\br}{\,\overline{\phantom{n}}\,} % overline
\newcommand{\dbf}{\itshape\bfseries}   % emphasis for definitions
\newcommand{\ord}{\operatorname{ord}}
\newcommand{\End}{\operatorname{End}}
\newcommand{\rk}{\operatorname{rk}}
\newcommand{\nriso}{\cong_{\text{\rm nr}}}  % near--isomorphism
\newcommand{\Z}{{\mathbb Z}}
\newcommand{\Q}{{\mathbb Q}}
\newcommand{\Tc}{\operatorname{T_{cr}}}
\newcommand{\R}{\operatorname{R}}
\newcommand{\calP}{\mathcal{P}}
\newcommand{\calC}{\mathcal{C}}
\newcommand{\calA}{\mathcal{A}}
\newcommand{\calB}{\mathcal{B}}
\newcommand{\calG}{\mathcal{G}}
\newcommand{\calS}{\mathcal{S}}
\newcommand{\ptn}{\operatorname{ptn}}
\newcommand{\olP}{\overline{P}}
\newcounter{noteno}
\begin{document}
\title%[MaSchu2017fnl6\quad \today]
{Indecomposable decompositions of
  torsion--free abelian groups}  

\author {Adolf Mader}
\address {Department of Mathematics \\
University of Hawaii at Manoa \\
2565 McCarthy Mall, Honolulu, HI 96922, USA}
\email{adolf@math.hawaii.edu}
  
\author {Phill Schultz}
\address {School of Mathematics and Statistics\\
The University of Western Australia \\ Nedlands\\
 Australia,  6009}
\email {phill.schultz@uwa.edu.au}
 
\subjclass[2010]{20K15, 20K25} 

\keywords{torsion-free abelian group, finite rank, almost completely
  decomposable, cyclic regulator quotient, block--rigid,
  indecomposable decomposition, realization of partitions}

\begin{abstract} An indecomposable decomposition of a torsion-free
  abelian group $G$ of rank $n$ is a decomposition $ G = A_1 \oplus
  \cdots \oplus A_t $ where each $A_i$ is indecomposable of rank
  $r_i$, so that $\sum_i r_i = n$ is a partition of $n$. The group $G$
  may have indecomposable decompositions that result in different
  partitions of $n$. We address the problem of characterizing those
  sets of partitions of $n$ which can arise from indecomposable
  decompositions of a torsion-free abelian group.
\end{abstract} 

\maketitle

%{\bf File:} Dropbox/MaSchu2017fnl6.tex \hfill October 4, 2017 
%\smallskip

\section{Introduction} 

All the groups considered in this paper belong to the class $\calG$ of
finite rank torsion--free abelian groups; we shall refer to them
simply as \lq groups\rq. Any group $G \in \calG$ is viewed as an
additive subgroup of a finite dimensional $\Q$--vector space and its
{\dbf rank}, $\rk(G)$, is the dimension of the subspace $\Q G$ that $G$
generates. If a group $G$ of rank $n$ has an indecomposable
decomposition $G = A_1 \oplus \cdots \oplus A_t$ where $A_i$ is
indecomposable of rank $r_i$, then $\sum_i r_i = n$ so $P=(r_1,\ldots,
r_t)$ is an unordered partition of $n$ into $t$ parts, and we say that $G$
{\dbf realizes} $P$.   

\medskip A group $G$ may realize several different partitions and we denote by
$\calP(G)$ the set of all partitions that are realized by $G$.
Determining $\calP(G)$ for a given group $G$ and characterizing
realizable sets of partitions have received a great deal of attention
in the literature. For example, \cite[Section 90]{F2} displays groups
realizing $(1,3)$ and $(2,2)$, groups realizing partitions into $2$
and $t$ parts for each $2 \leq t < n$, and groups realizing every
partition of $n$ into $t$ parts (Corner's Theorem \ref{Corner} below).

\medskip
On the other hand, there are classes of groups, such as completely
decomposable groups, direct sums of indecomposable fully invariant
subgroups, and almost completely decomposable groups with primary
regulating index (see \cite[Theorem 3.5]{FS96}) that realize a unique
partition. Furthermore, some sets of partitions are incompatible, in
the sense that no group can realize them.  For example, we showed in
\cite[Theorem 3.5]{MS17} (\thmref{main decomposition}) that no group
can realize both $(k, 1^{n-k})$ and $(m, 1^{n-m})$ if $k \ne m$, where
$(i,1^{j})$ stands for the partition $(i,1,\ldots,1)$ with $j$ copies
of~$1$.  
\medskip
In this paper, we consider three problems:
\begin{enumerate}
\item Characterize families of partitions that can be realized by some
  group.
\item For a given family $\calP$ of partitions of $n$ that can be
  realized, determine the groups $G$ that realize $\calP$.
\item For a given class $\calA$ of groups, determine the families
  $\calP(G)$ of partitions realized by $G \in \calA$.
\end{enumerate}
\smallskip

For the most part, we adopt the notation of \cite{F2}.  The
definitions and properties of almost completely decomposable, rigid
and block--rigid cyclic regulator quotient groups can be found in
\cite{MA00}. The properties we need are described in Section
\ref{BRCRQ}.
\medskip

This paper contains the following results. By $\calB$ we denote
the class of all block--rigid almost completely decomposable groups
with cyclic regulator quotient.

\begin{enumerate}
\item The ``Hook condition'' is shown to be necessary for the sets of
  partitions $\calP(X)$ of groups $X \in \calB$ (\propref{P(G)
    hooked}(1)). 
\item A useful ``standard'' description of $\calB$--groups is established
  (\lemref{integral coordinates}).
\item The workhorse \lemref{sums of rigid crq groups} describes the
  direct sum of two $\calB$--groups given in standard description.
\item General obstructions to realization are established
  (\lemref{indecomposable fully invariant summand}, \lemref{completely
    decomposable tau homogeneous fully invariant summand},
  \lemref{bound for ranks}).
\item The maximal sets of partitions of $n$ that satisfy the Hook
  condition are described and denoted by $\calS(n,k)$ (\propref{unique
    hook}).  The (few) $\calS(n,k)$ that can be realized by groups in
  $\calB$ and the (many) that cannot so realized are determined
  (\thmref{complete answer}).
\end{enumerate} 

\section{Background}\label{BRCRQ}

The decomposition properties of torsion--free groups of finite rank in
general are extremely complex (see \cite{BlM94} and \cite{F2}). For
example, A. L. S. Corner \cite{Cor} proved an intriguing theorem on
\lq pathological\rq\ decompositions:

\begin{thm}\label{Corner} {\rm (Corner's Theorem)} Given integers 
  $n \geq k \geq 1$, there exists a group $G$ that realizes every
  partition of $n$ into $k$ parts.
\end{thm}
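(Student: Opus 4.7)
The plan is to translate the problem into a question about idempotents in endomorphism rings and then invoke a realization theorem. If $G$ is a torsion--free group of rank $n$ and $e \in \End(G)$ is idempotent, then $G = eG \oplus (1-e)G$ with $\rk(eG) = \rk_\Q(e)$ (the rank of $e$ viewed as a $\Q$--linear map on $\Q G$), and $eG$ is indecomposable iff $e$ is primitive in $\End(G)$, i.e.\ the corner ring $e\End(G)e$ has no nontrivial idempotents. Hence indecomposable decompositions of $G$ into $k$ summands correspond bijectively to decompositions of $1 \in \End(G)$ as a sum of $k$ orthogonal primitive idempotents, with the ranks of the summands reading off the underlying partition.

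Accordingly, I would first construct a countable torsion--free reduced ring $R$ sitting inside $\Mat_n(\Q)$ and containing the identity matrix, such that for every partition $P = (r_1, \ldots, r_k)$ of $n$ into $k$ parts, $R$ contains orthogonal idempotents $e_1^P, \ldots, e_k^P$ of $\Q$--ranks $r_1, \ldots, r_k$ summing to $1$, each primitive in $R$. A natural starting point is the $\Z$--subalgebra of $\Mat_n(\Z)$ generated by the block--diagonal projectors onto the coordinate subspaces associated with the various partitions of the standard basis, together with carefully chosen \emph{glue} matrices whose entries are divisible by distinct primes $p_P$ (one prime assigned to each partition $P$). Once $R$ is in place, a suitable form of Corner's endomorphism--ring realization theorem produces a torsion--free abelian group $G$ with $\End(G) \cong R$, and the idempotent decompositions $1 = \sum_i e_i^P$ transfer directly to the required indecomposable decompositions of $G$ realizing every $P$.

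The main obstacle is the simultaneous control of two opposing constraints on $R$: it must be large enough to contain all the idempotent families $\{e_i^P\}_P$, yet small enough that each $e_i^P$ remains primitive in $R$. The key device is to exploit the primes $p_P$, arranging the glue so that any hypothetical refinement of $e_i^P$ inside $R$ would force a $p_{P'}$--divisibility relation (for some $P' \neq P$) that $R$ has been set up to forbid; this is a $p$--local primitivity argument done separately at each prime. Verifying that the resulting primitive--idempotent calculus yields the required decomposition for every partition, and tuning the realization step so that $\rk(G) = n$ rather than some multiple of $n$, are the two places where the construction demands genuine care. If the latter rank control proves awkward, an alternative plan is to bypass endomorphism rings entirely and build $G$ directly as an almost completely decomposable group with rigid type lattice, using the machinery of Section~\ref{BRCRQ}: choose $n$ pairwise incomparable critical types and a cyclic regulator quotient whose cocycle is generic enough to admit, for each partition $P$, a change of basis block--diagonalizing $G$ into indecomposable summands of the prescribed ranks.
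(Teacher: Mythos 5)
Your proposal is a plan rather than a proof, and both of the places where you say ``genuine care'' is needed are in fact unresolved obstructions rather than technical tuning. First, the rank control: Corner's endomorphism--ring realization theorem produces, from a countable reduced torsion--free ring $R$, a group $G$ with $\End(G)\cong R$ of rank $2\operatorname{rk}(R)$ (the additive rank of $R$), not a group of rank $n$ on which $R\subseteq \Mat_n(\Q)$ acts by its natural matrix representation. Your ring $R$ is generated by idempotents attached to \emph{all} partitions of $n$ into $k$ parts, so its additive rank can be as large as $n^2$; without a realization of the much stronger form ``$\Z^n\subseteq G\subseteq \Q^n$, $RG\subseteq G$, and $\End(G)=R$ acting as the given matrices,'' the equality $\rk(e_i^P G)=r_i$ simply does not follow, and the whole translation collapses. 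Such minimal--rank realizations exist only under restrictive hypotheses that you have not verified. Second, the construction of $R$ itself is never carried out: the entire difficulty of the theorem is concentrated in showing that each $e_i^P$ stays primitive in the ring generated by all the families $\{e_j^{P'}\}_{P'}$ simultaneously (note that $\Mat_n(\Z)$ itself already splits $1$ into $n$ rank--one idempotents, so $R$ must be large enough to contain every family yet avoid all coordinate projections), and the ``glue matrices divisible by distinct primes $p_P$'' device is only named, not shown to control the idempotent content of the corner rings $e_i^P R\, e_i^P$.

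By contrast, the construction the paper uses (Definition~\ref{Corner groups}, Theorem~\ref{decompositions of Corner groups}) --- which is essentially Corner's original one, and which your closing sentence gestures at without executing --- is elementary and sidesteps both problems: take $R=\tau u_1\oplus\cdots\oplus\tau u_k\oplus\tau_1v_1\oplus\cdots\oplus\tau_{n-k}v_{n-k}$ with pairwise incomparable types and set $X=R+\sum_i\frac{\Z}{q_i}(u_1+v_i)$ with the $q_i$ pairwise coprime. The rank is $n$ by construction, the group lies in $\calB$ so Lemma~\ref{sums of rigid crq groups} and Proposition~\ref{indecomposable via frame} identify the indecomposable summands, and the different partitions in $\calC(n,k)$ are obtained by changing basis in the $\tau$--homogeneous block and regrouping which of the $v_i$ are glued to which $u_j$. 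If you want a complete argument, you should either carry out that direct construction or supply (i) a minimal--rank realization theorem of the required form and (ii) an explicit ring $R$ with a verified primitivity calculus; as it stands, neither is done.
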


Specializations and simplifications are needed in order to have
any hope for results.

Our essential specialization consists in passing to the subclass of
almost completely decomposable groups, first defined and
systematically investigated by Lady (\cite{LA74}), and further to the
much more special but still interesting and rich subclass $\calB$ of
block rigid groups with cyclic regulator quotient. To our knowledge
all published examples of groups with ``pathological'' decompositions
are almost completely decomposable groups. The essential
simplification occurs by passing from isomorphism to Lady's
near--isomorphism (\cite{LA75}, \cite[Chapter 9]{MA00}). A theorem by
David Arnold (\cite[Corollary~12.9]{AR82},
\cite[Theorem~10.2.5]{MA00}) says that passing from isomorphism to the
coarser equivalence relation of Lady does not mean a loss of
generality for our line of questioning: if $G$ and $G'$ are nearly
isomorphic groups, then $\calP(G) = \calP(G')$. 
\medskip

  A completely decomposable group $A$ is the direct sum of
  rank--$1$ groups. We always use its \lq homogeneous decomposition\rq\ 
  $A = \bigoplus_{\rho \in \Tc(A)} A_\rho$ where $A_\rho$ is a
  non--zero direct sum of rank--$1$ groups all of type $\rho$.
  If so, the index set $\Tc(A)$ is the {\dbf critical typeset} of
  $A$.  An {\dbf almost completely decomposable group} is a group
containing a completely decomposable subgroup of finite index. Let $X$
be an almost completely decomposable group. Then $X$ contains
completely decomposable subgroups of least index in $X$, and these are
Lady's {\dbf regulating subgroups.} The intersection of all regulating
subgroups is a fully invariant completely decomposable subgroup of
$X$, Burkhardt's {\dbf regulator $\R(X)$ of $X$}. Let $\R(X) =
\bigoplus_{\rho \in \Tc(\R(X))} R_\rho$ be the homogeneous decomposition
of $R$. An almost completely decomposable group $X$ is {\dbf
  block--rigid} if $\Tc(\R(X))$ is an antichain and   {\dbf rigid} if
$\Tc(X)$ is an antichain and for all $\rho \in \Tc(X)$, $\rk R_\rho =
1$. We denote by $\calB$ the class of all block--rigid groups for
which $X/\R(X)$ is cyclic.
\smallskip

Let $G$ and $H$ be groups. We say that $G$ is {\dbf nearly
  isomorphic} to $H$, denoted $G \nriso H$, if there is a group $K$
such that $G \oplus K \cong H \oplus K$.  The properties of near
isomorphism are described in \cite[Section 12]{AR82} and in
\cite[Chapter 10]{MA00}. In our context, the most important such
property is \cite[Corollary 12.9]{AR82}, \cite[Theorem 10.2.5]{MA00}:
for any groups $G$ and $H$, if $G \nriso H$ and $G = G_1 \oplus \cdots
\oplus G_t$, then $H = H_1 \oplus \cdots \oplus H_t$ with $H_i \nriso
G_i$, in particular $\calP(G)=\calP(H)$.

E. Blagoveshchenskaya proved an interesting obstruction to realization
(\cite{BlM94}, \cite[Chapter~13]{MA00}) which we quote as Theorem
\ref{Blago2}.

\begin{thm}\label{Blago2} \rm{(\cite{BlM94})} Let $(r_1, \ldots,
  r_s)$ and $(r'_1, \ldots, r'_t)$ be two partitions of $n$. Then
  there exists a block--rigid crq--group realizing $(r_1, \ldots,
  r_s)$ and $(r'_1, \ldots, r'_t)$ if and only if $r_i \leq n-t+1$ for
  all $i = 1, \ldots, s$ and $r'_j \leq n-s+1$ for all $j = 1, \ldots,
  t$.
\end{thm}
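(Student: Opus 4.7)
The plan is to treat necessity and sufficiency as separate arguments, both resting on the structural description of $\calB$-groups developed in \cite[Chapter~13]{MA00}.

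\textbf{Necessity.} Suppose $G \in \calB$ of rank $n$ admits indecomposable decompositions $G = A_1 \oplus \cdots \oplus A_s = B_1 \oplus \cdots \oplus B_t$ with $\rk(A_i) = r_i$ and $\rk(B_j) = r'_j$. The bound $r_i \leq n-s+1$ is automatic since the remaining $s-1$ summands each contribute rank at least one, and similarly $r'_j \leq n-t+1$. The content of the theorem is the \emph{cross} bound $r_i \leq n-t+1$ (and symmetrically $r'_j \leq n-s+1$), which is genuinely sharper whenever $s < t$. To prove it, I would first use block-rigidity to observe that the regulator decomposes compatibly in each direct-sum, $\R(G) = \R(A_1) \oplus \cdots \oplus \R(A_s) = \R(B_1) \oplus \cdots \oplus \R(B_t)$, with each homogeneous component of $\R(G)$ lying entirely inside some $\R(A_i)$ and inside some $\R(B_j)$. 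Then, using that $G/\R(G)$ is cyclic, one argues that if $r_i > n-t+1$, then $A_i$ must share its critical typeset with only a proper subfamily of the $B_j$'s, in a way that produces an idempotent on $A_i$ refining it along the partition induced by the $B_j$'s, contradicting the indecomposability of $A_i$.

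\textbf{Sufficiency.} Given partitions $(r_1, \ldots, r_s)$ and $(r'_1, \ldots, r'_t)$ of $n$ satisfying the bounds, I would construct $G \in \calB$ realizing both via the standard $\calB$-group recipe. Pick a prime $p$ and $n$ pairwise incomparable rigid rank-one types $\tau_1, \ldots, \tau_n$ with representatives $R_1, \ldots, R_n$. Let $R = R_1 \oplus \cdots \oplus R_n$ and $G = R + \Z\,\tfrac{1}{p}(c_1 u_1 + \cdots + c_n u_n)$, where $u_k$ generates $R_k/pR_k$ and the $c_k \in \Z$ are coprime to $p$. The admissible indecomposable decompositions of $G$ correspond to certain partitions of $\{1, \ldots, n\}$ whose blocks must interact suitably with the vector $(c_1, \ldots, c_n) \in (\Z/p\Z)^n$; by choosing this vector carefully one can arrange that both target partitions appear. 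The cross bounds encode precisely the combinatorial condition on block sizes that permits a single vector to support both decompositions simultaneously.

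The main obstacle is the sufficiency direction: producing one vector $(c_1, \ldots, c_n)$ supporting two different indecomposable block structures is a nontrivial combinatorial construction, and this is exactly where the cross bounds $r_i \leq n-t+1$ and $r'_j \leq n-s+1$ are consumed. The necessity direction is conceptually a pigeonhole argument, but its rigor depends on the specific $\calB$-structure --- indeed, Corner's Theorem~\ref{Corner} shows that no comparable bounds hold for general torsion-free groups of finite rank, so any proof of necessity must exploit block-rigidity together with the cyclic regulator quotient in an essential way.
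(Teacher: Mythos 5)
First, note that the paper itself does not prove this statement: Theorem~\ref{Blago2} is quoted from \cite{BlM94} and \cite[Chapter~13]{MA00}, so there is no internal proof to compare against. Judged on its own terms, your proposal is a plan rather than a proof, and both halves have substantive problems. The sufficiency construction is actually wrong as stated: if $G = R + \Z\frac{1}{p}(c_1u_1+\cdots+c_nu_n)$ with $p$ prime, then $[G:\R(G)]=p$, and the Product Formula (Theorem~\ref{product formula}) says that in any decomposition $G=X_1\oplus\cdots\oplus X_t$ the quotients $X_i/\R(X_i)$ are cyclic of pairwise relatively prime orders multiplying to $p$; hence exactly one summand is not completely decomposable, so $\calP(G)$ consists of hooks only, and by Corollary~\ref{hook} of a single hook. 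A one-prime group can therefore never realize two distinct partitions. The correct construction (as in \cite{BlM94} and as Lemma~\ref{sums of rigid crq groups} and Examples~\ref{S53},~\ref{S64} illustrate) must use several pairwise coprime regulator indices, gluing indecomposable pieces along shared critical types; the whole difficulty of sufficiency lives in choosing those indices and overlaps, and none of that appears in your sketch.

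For necessity, the bound $r_i\le n-s+1$ is indeed trivial, but your mechanism for the cross bound ("$A_i$ shares its typeset with only a proper subfamily of the $B_j$, producing an idempotent") does not identify the actual obstruction, which is arithmetic rather than set-theoretic. The argument that works is: each indecomposable summand is rigid with connected frame (Proposition~\ref{indecomposable via frame}); the regulator indices $[B_j:\R(B_j)]$ are pairwise coprime and $\mu_\tau(G)=\prod_j\mu_\tau(B_j)$ (Product Formula); hence any prime witnessing an edge $\{\sigma,\tau\}$ of the frame of $A_i$ divides exactly one $[B_j:\R(B_j)]$, forcing $\sigma,\tau\in\Tc(B_j)$ for that same $j$. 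A spanning tree of the connected frame of $A_i$ has $r_i-1$ edges, and at most $\rk(B_j)-1$ of them can lie inside a single $\Tc(B_j)$ without creating a cycle, so $r_i-1\le\sum_j(\rk(B_j)-1)=n-t$. Your sketch contains neither the coprimality step nor this count, and without them no contradiction is reached. As written, the proposal establishes neither direction.
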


Another obstruction to realization is implicit in the ``Main Decomposition
Theorem'' below. A group is {\dbf clipped} if it does not have a
rank--one summand. Clearly any group in $\calG$ has a completely
decomposable direct summand of maximal rank (that may be $0$) and the
complementary summand is necessarily clipped.  

\begin{thm}\label{main decomposition} {\rm (\cite[Theorem~2.5]{MS17})}
  Let $G \in \calG$ and let $G = G_{cd} \oplus G_{cl}$ be a
  decomposition of $G$ with completely decomposable summand $G_{cd}$
  and clipped summand $G_{cl}$. Then $G_{cd}$ is unique up to
  isomorphism and $G_{cl}$ is unique up to near--isomorphism, i.e., if
  also $G = G_{cd}' \oplus G_{cl}'$ is such that $G_{cd}'$ is
  completely decomposable and $G_{cl}'$ is clipped, then $G_{cd} \cong
  G_{cd}'$ and $G_{cl} \nriso G_{cl}'$.
\end{thm}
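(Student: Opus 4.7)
The plan is to prove uniqueness in two stages: first show $G_{cd} \cong G'_{cd}$ via type-theoretic invariants, and then deduce $G_{cl} \nriso G'_{cl}$ from Arnold's near-isomorphism cancellation quoted in the excerpt. Existence of the decomposition is already noted to be clear via rank maximization: any completely decomposable direct summand of maximum rank has a clipped complement, since a rank-$1$ summand of the complement would enlarge the completely decomposable part, contradicting maximality.

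So suppose $G = G_{cd} \oplus G_{cl} = G'_{cd} \oplus G'_{cl}$ are two decompositions of the stated form, and let $G_{cd} = \bigoplus_{\tau \in \Tc(G_{cd})} G_{cd,\tau}$ be the homogeneous decomposition. The isomorphism class of $G_{cd}$ is determined by the function $\tau \mapsto \rk G_{cd,\tau}$, so it suffices to show that $\rk G_{cd,\tau} = \rk G'_{cd,\tau}$ for every type $\tau$. I would proceed by induction over the finite poset of critical types of $G$, starting from the maximal types and descending. At each stage one identifies an invariant of $G$ itself (a suitable quotient of the $\tau$-socle $G(\tau)$ by the subgroup generated by elements of strictly greater type, corrected by the inductively known ranks at higher types) whose value equals the number of rank-$1$ summands of type $\tau$ in the completely decomposable part of any decomposition of the stated form. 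The main obstacle is precisely this step: one must use the clipped hypothesis on both $G_{cl}$ and $G'_{cl}$ to rule out that a clipped indecomposable of rank $\geq 2$ whose typeset contains $\tau$ inflates the invariant without actually supplying a rank-$1$ summand of type $\tau$. Only after this obstruction is cleared can the invariant be read off purely from $G$, and the equality of ranks be concluded.

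With $G_{cd} \cong G'_{cd}$ in hand, the isomorphism $G_{cd} \oplus G_{cl} \cong G_{cd} \oplus G'_{cl}$ combined with Arnold's cancellation up to near-isomorphism (\cite[Corollary 12.9]{AR82}, \cite[Theorem 10.2.5]{MA00}) immediately gives $G_{cl} \nriso G'_{cl}$. This last step is formal and also explains why the conclusion cannot be strengthened to $G_{cl} \cong G'_{cl}$: cancellation in $\calG$ holds only up to near-isomorphism, and this weaker cancellation is what is inherited by the clipped summand.
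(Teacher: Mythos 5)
This theorem is not proved in the present paper at all: it is imported verbatim from \cite[Theorem~2.5]{MS17}, so the only honest comparison is with the argument there, which proceeds by induction on rank via exchange/substitution lemmas for rank--one summands in the near--isomorphism category, not by the type--poset induction you propose. Your second stage is fine and essentially free of content: once $G_{cd}\cong G_{cd}'$ is known, $G_{cl}\oplus G_{cd}\cong G_{cl}'\oplus G_{cd}$ gives $G_{cl}\nriso G_{cl}'$ directly from the paper's definition of near--isomorphism (take $K=G_{cd}$); no appeal to Arnold's Corollary~12.9 is even needed. Likewise Baer's classification reduces the first stage to showing that each homogeneous rank $\rk G_{cd,\tau}$ is an invariant of $G$.

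But that first stage is the entire theorem, and your proposal does not prove it. The candidate invariant you gesture at, a quotient of the $\tau$--socle such as $G(\tau)/G^\sharp(\tau)$ where $G^\sharp(\tau)$ is generated by the elements of type strictly greater than $\tau$, is additive over direct sums, so its rank equals $\rk G_{cd,\tau}$ \emph{plus} the contribution $\rk\bigl(G_{cl}(\tau)/G_{cl}^\sharp(\tau)\bigr)$ of the clipped part. That contribution is typically nonzero --- for instance an indecomposable rank--$2$ group $[\tau\,\sigma]$ in $\calB$ contributes $1$ at $\tau$ without supplying any rank--one summand --- and it is \emph{not} controlled by the inductively known ranks at types above $\tau$; it depends on the internal structure of $G_{cl}$, which is exactly the object whose uniqueness is in question. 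So the ``correction term'' you would need to subtract is not available to the induction, and the argument is circular. Your own text concedes this (``the main obstacle is precisely this step'') without resolving it; naming the obstruction is not the same as clearing it, and since every known proof of this result has to work around precisely the failure of Krull--Schmidt illustrated by the $(1,3)$ versus $(2,2)$ examples, the missing step cannot be waved through. As written, the proposal is a plan with the theorem's difficulty left intact at its center.
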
 

\begin{cor}\label{hook}
 If $G$ realizes $(r,1^{n-r})$ and $(r',1^{n-r'})$, then $r=r'$. 
\end{cor}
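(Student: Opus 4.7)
The plan is to apply the Main Decomposition Theorem (\thmref{main decomposition}) to the two given indecomposable decompositions of $G$ and read off the clipped part in each case. The uniqueness of $G_{cl}$ up to near-isomorphism, together with the fact that near-isomorphic groups have the same rank, will force the two candidates for $G_{cl}$ to have equal rank, from which $r=r'$ follows.

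First I would observe that an indecomposable group of rank $\ge 2$ is automatically clipped (any rank-one summand would contradict indecomposability), while any rank-one group is completely decomposable. Given a decomposition $G = A \oplus B_1 \oplus \cdots \oplus B_{n-r}$ realizing $(r,1^{n-r})$, the subgroup $B_1 \oplus \cdots \oplus B_{n-r}$ is completely decomposable and, if $r\ge 2$, the summand $A$ is clipped. Hence when $r\ge 2$ this presents $G$ in the form $G_{cd}\oplus G_{cl}$ required by \thmref{main decomposition}, with $\rk(G_{cl})=r$; when $r=1$ the whole group is completely decomposable so $G_{cl}=0$ has rank $0$. Applying the same analysis to the other decomposition realizing $(r',1^{n-r'})$ gives $\rk(G_{cl})=r'$ if $r'\ge 2$, and $\rk(G_{cl})=0$ if $r'=1$.

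Since \thmref{main decomposition} asserts that $G_{cl}$ is unique up to near-isomorphism, and near-isomorphic groups have the same rank, the two computed values of $\rk(G_{cl})$ must agree. If both $r,r'\ge 2$, this gives $r=r'$ directly. If $r=1$, then $\rk(G_{cl})=0$, forcing $r'=1$ as well (since $r'\ge 2$ would give $\rk(G_{cl})=r'\ge 2$); by symmetry, $r'=1$ forces $r=1$. In every case $r=r'$.

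The argument is essentially a one-line consequence of \thmref{main decomposition}, so there is no substantive obstacle; the only point that needs care is the boundary case $r=1$ (respectively $r'=1$), where the ``indecomposable rank-$r$ summand'' is itself rank one and therefore contributes to $G_{cd}$ rather than to $G_{cl}$. Treating this case separately, as above, closes the proof.
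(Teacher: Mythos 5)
Your proof is correct and is exactly the argument the paper intends: the corollary is stated as an immediate consequence of \thmref{main decomposition}, identifying the rank-$r$ indecomposable summand (when $r\ge 2$) as the clipped part and the rank-one summands as the completely decomposable part, then invoking uniqueness of $G_{cl}$ up to near-isomorphism to equate the ranks. Your separate treatment of the boundary case $r=1$ is a sensible piece of added care.
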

  
\section{Hooked Partitions}

Recall that $\calP(G)$ denotes the set of all partitions $P = (r_1,
\ldots, r_t)$ of $\rk(G)$ for which there is an indecomposable
decomposition $G = G_1 \oplus \cdots \oplus G_t$ such that for all $i
\in \{1, \dots, t\}$, $r_i = \rk(G_i)$. There are two extreme values
of evident interest: Let $r^G$ be the largest rank of an
indecomposable summand of $G$, and let $t^G$ be the largest number of
summands in an indecomposable decomposition of $G$. Intuitively, if
$r^G$ is large, then $t^G$ should be small and conversely. For
example, if $r^G = \rk(G)$, then $t^G = 1$, $G$ is indecomposable and
$\calP(G) = \{(\rk(G))\}$; if $t^G = \rk(G)$, then $r^G = 1$, $G$ is
completely decomposable and $\calP(G) = \{(1, \ldots, 1)\}$. Thus if
$G$ is indecomposable or completely decomposable, then $r^G + t^G =
\rk(G)+1$. 

\medskip In view of Corner's Theorem denote by $\calC(n,k)$ the set
of all partitions of $n$ into $k$ parts.
The following proposition describes restrictions on realizable
partitions.

\begin{prop}\label{P(G) hooked} \hfill
\begin{enumerate}
\item For any $G \in \calB$, it is true that $r^G + t^G \leq \rk(G)+1$.
\item For any $G \in \calG$ with $\calP(G) = \calC(n,k)$ it is true
  that $r^G + t^G = n+1$.
\end{enumerate} 
\end{prop}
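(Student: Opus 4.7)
The plan is to deduce both parts essentially from Blagoveshchenskaya's obstruction (\thmref{Blago2}) for part (1), and from a direct combinatorial observation about $\calC(n,k)$ for part (2).

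For part (1), I would choose a partition $P = (r_1,\ldots,r_s) \in \calP(G)$ in which the largest part equals $r^G$; by relabelling, assume $r_1 = r^G$. Then choose a partition $P' = (r'_1,\ldots,r'_{t^G}) \in \calP(G)$ with the maximum number of parts $t^G$. Since $G \in \calB$ realizes both $P$ and $P'$, \thmref{Blago2} applies, giving in particular $r_1 \leq n - t^G + 1$, that is
\[
r^G + t^G \leq n+1 = \rk(G) + 1.
\]
(If $P = P'$, one simply applies \thmref{Blago2} to the same partition with itself, and the same inequality results.) This is the entire argument; no obstacle is foreseen.

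For part (2), assume $\calP(G) = \calC(n,k)$. Every partition in $\calC(n,k)$ has exactly $k$ parts, so every indecomposable decomposition of $G$ has exactly $k$ summands, which forces $t^G = k$. On the other hand, every partition of $n$ into $k$ parts has its largest part at most $n - k + 1$, with equality realized by the hook $(n-k+1,1,\ldots,1) \in \calC(n,k) = \calP(G)$. Hence $r^G = n-k+1$, and adding gives
\[
r^G + t^G = (n-k+1) + k = n+1.
\]

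The argument is essentially bookkeeping, so the main (mild) obstacle is only to confirm that \thmref{Blago2} is applicable in the generality in which part (1) is stated: the theorem is a near-isomorphism-invariant statement about $\calB$-groups, so replacing $G$ by a nearly isomorphic group does not affect the realized partitions (by \thmref{main decomposition}-style invariance quoted just before it), and the hypothesis $G \in \calB$ in part (1) is exactly what is needed.
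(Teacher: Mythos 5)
Your proof is correct and follows essentially the same route as the paper: part (1) applies the ``only if'' direction of \thmref{Blago2} to a partition containing a part of size $r^G$ and a partition of length $t^G$, and part (2) reads off $t^G = k$ and $r^G = n-k+1$ directly from $\calC(n,k)$. The paper's argument is identical in substance, so no further comment is needed.
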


\begin{proof} (1) Let $G$ be a $\calB$--group of rank $n$ and suppose
  that $G = C \oplus D = B_1 \oplus \cdots \oplus B_t$ are
  decompositions of $G$ with $C$ indecomposable, $r^G = \rk C$ and all
  $B_i$ indecomposable with $t = t^G$. According to \thmref{Blago2}
  $r^G \leq n-t^G+1$ or $r^G + t^G \leq n+1$.

  (2) For a group $G$ with $\calP(G) = \calC(n,k)$ we have $t^G = k$
  and as $(n-k+1,1^{k-1}) \in \calC(n,k)$ we have $r^G = n-k+1$, hence
  $r^G + t^G = n+1$.
\end{proof}

Let $\calP(n)$ be the set of all partitions of $n$ and let $\calP
\subseteq \calP(n)$, so $\calP$ is a family of partitions of $n$. A
{\dbf hook} in $\calP(n)$ is a partition $(k,1^{n-k})$ where $k \in
\{1,\ldots,n\}$. Let $r^\calP$ be the largest entry of any partition
in $\calP$ and let $t^\calP$ be the length of the longest partition in
$\calP$. We say that {\dbf $\calP$ satisfies the Hook Condition} or
simply that {\dbf $\calP$ is hooked} if $r^\calP + t^\calP \leq n+1$.

\medskip By \propref{P(G) hooked} the family $\calP(G)$ is hooked if $G \in
\calB$ or if $\calP(G)=\calC(n,k)$ for some $k$.

\begin{qu}\label{hooked?} Is $\calP(G)$ hooked for every $G \in \calG$?
\end{qu}

We conjecture, but so far have no proof that the Hook Condition is
indeed necessary for $\calP(G)$.   
On the other hand, we show below that the Hook Condition is far from
sufficient to characterize $\calP(G)$.

\medskip Let $P \in \calP(n)$, so $P = (r_1, r_2, \cdots, r_t)$, $r_i \geq 1$,
$\sum r_i = n$. Then $\olP$ denotes the largest term of the partition
and $|P|=t$ is its length. 

\medskip For $n \geq 1$ and $1 \leq k \leq n$ we define \[\calS(n,k) = \{P \in
\calP(n) \mid \olP \leq k \text{ and } k+|P| \leq n+1\}.\] The sets
$S(n,k)$ are well known in Partition Theory under the name of
\lq restricted partitions\rq.  They are studied, enumerated and
tabulated for example in \cite[Section 14.7]{A71}.

\begin{prop} \label{unique hook} \hfill
\begin{enumerate}
\item The family of partitions $\calS(n,k)$ is hooked, contains a
  unique hook $(k,\,1^{n-k})$ and is maximal with respect to inclusion
  among the hooked families of partitions of $n$.
\item If a group $G$ realizes $\calS(n,k)$ and $\calP(G)$ is hooked,
  then $\calP(G) = \calS(n,k)$.
\item If a group $G$ realizes $\calC(n,k)$ and $\calP(G)$ is hooked,
  then $\calP(G) = \calC(n,k)$.
  \end{enumerate}
\end{prop}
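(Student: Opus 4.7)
The strategy is direct verification for (1), a one-line deduction from (1) for (2), and a more delicate argument combining (1), Corollary \ref{hook}, and the Main Decomposition Theorem (\thmref{main decomposition}) for (3). Write $H_k = (k, 1^{n-k})$ for the hook parameterized by $k$.

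For (1), the central observation is that $H_k$ itself lies in $\calS(n,k)$, since its largest entry is $k$ and $k + |H_k| = k + (n-k+1) = n+1$. This single partition attains both $r^{\calS(n,k)} = k$ and $t^{\calS(n,k)} = n-k+1$, so $\calS(n,k)$ is hooked with $r+t = n+1$. Any hook $(j, 1^{n-j}) \in \calS(n,k)$ must satisfy both $j \leq k$ and $k + (n-j+1) \leq n+1$, forcing $j=k$, so $H_k$ is the unique hook. For maximality, a hooked family $\calP' \supseteq \calS(n,k)$ inherits $H_k$, whence $r^{\calP'} \geq k$ and $t^{\calP'} \geq n-k+1$; the hook inequality then promotes these to equalities, and every $P \in \calP'$ automatically satisfies the defining inequalities of $\calS(n,k)$.

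Part (2) is immediate: $\calP(G)$ is a hooked family containing $\calS(n,k)$, so by the maximality clause of (1), $\calP(G) = \calS(n,k)$. For (3), first apply (1) with parameter $n-k+1$: the hook $H_{n-k+1} = (n-k+1, 1^{k-1})$ lies in $\calC(n,k) \subseteq \calP(G)$, so $\calP(G) \subseteq \calS(n, n-k+1)$, meaning every $P \in \calP(G)$ has $|P| \leq k$ and $\olP \leq n-k+1$. The remaining task is to exclude any $P$ of length strictly less than $k$. Here I would invoke the Main Decomposition Theorem: realizing $H_{n-k+1}$ pins down $G_{cd}$ up to isomorphism as completely decomposable of rank $k-1$ and $G_{cl}$ up to near-isomorphism as indecomposable of rank $n-k+1$, so $G \nriso \Z^{k-1} \oplus A$ with $A$ indecomposable of rank $n-k+1$. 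A hypothetical decomposition $G = C_1 \oplus \cdots \oplus C_t$ with $t<k$ would then be transferred through this near-isomorphism, and a count of rank-one summands against the invariant cd-rank $k-1$ must yield the contradiction.

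The main obstacle is exactly this last step. Finite-rank torsion-free groups fail Krull--Schmidt, and near-isomorphism respects direct-sum decompositions only by matching indecomposable pieces one at a time, so the count must genuinely exploit that the clipped summand $A$ is indecomposable (a property preserved by near-isomorphism) to prevent $\Z^{k-1} \oplus A$ from splitting into fewer than $k$ indecomposable pieces. Everything else in the proof reduces to routine bookkeeping with the definitions of hookedness, $\calS(n,k)$, and $\calC(n,k)$.
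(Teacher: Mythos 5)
Parts (1) and (2) of your proposal are correct and essentially coincide with the paper's treatment (the paper outsources (1) to a reference on restricted partitions and proves (2) exactly as you do, via maximality). The genuine gap is in part (3), and it sits precisely at the step you yourself flag as the main obstacle. Your reduction is right as far as it goes: hookedness plus $(n-k+1,1^{k-1})\in\calP(G)$ gives $\olP\le n-k+1$ and $|P|\le k$ for every $P\in\calP(G)$, i.e.\ $\calP(G)\subseteq\calS(n,n-k+1)$, and partitions of length strictly less than $k$ (for instance $(5,4)$ when $n=9$, $k=3$) still have to be excluded. But the route you sketch cannot close this. First, the rank-one count is not available: in an indecomposable decomposition $G=C_1\oplus\cdots\oplus C_t$ the sum of the summands of rank $\ge 2$ need not be clipped --- \exref{from (2,2) to (2,1,1)} and \lemref{sums of rigid crq groups} show that a direct sum of clipped indecomposables can acquire rank-one summands --- so the number of rank-one $C_i$ is only bounded above by $\rk G_{cd}=k-1$, not determined by it. Second, and decisively, the intermediate claim your plan needs --- that a group whose main decomposition is (completely decomposable of rank $k-1$) $\oplus$ (indecomposable clipped of rank $n-k+1$) cannot split into fewer than $k$ indecomposables --- is false. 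By \thmref{Blago2} there is a group $G\in\calB$ of rank $9$ realizing both $(7,1,1)$ and $(5,4)$; by \thmref{main decomposition} its main decomposition has the stated form with an indecomposable clipped part of rank $7$, its $\calP(G)$ is hooked by \propref{P(G) hooked}(1), and yet it decomposes into two indecomposable pieces. Hence any argument using only the single hook $(n-k+1,1^{k-1})$ together with hookedness must fail; the full hypothesis $\calC(n,k)\subseteq\calP(G)$ has to be exploited, and your proposal does not indicate how.

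For comparison, the paper's own proof of (3) is shorter and different in emphasis: from $t^G\ge k$ and $(n-k+1,1^{k-1})\in\calP(G)$ it deduces $r^G=n-k+1$ and $t^G=k$ and stops. That establishes exactly the containment $\calP(G)\subseteq\calS(n,n-k+1)$ that you also reach, and does not itself address the exclusion of partitions with fewer than $k$ parts. So your instinct that a further idea is needed is sound; but the further idea is not the one you propose, and you should be aware that it is not supplied by the argument printed in the paper either.
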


\begin{proof} (1) \cite[Section 14.7]{A71}.

  (2) By hypothesis $\calS(n,k) \subseteq \calP(G)$. As
  $\calP(G)$ is hooked and $\calS(n,k)$ is a maximal hooked family of
  partitions we must have equality.

  (3) Suppose that $\calC(n,k) \subseteq \calP(G)$ and $\calP(G)$ is
  hooked. Then $t^G \geq k$ and as $(n-k+1,1^{k-1}) \in \calC(n,k)
  \subseteq \calP(G)$, further $r^G \geq n-k+1$. Now $\calP(G)$ being
  hooked we get $n+1 \geq r^G + t^G \geq (n-k+1) + k = n+1$ and
  therefore $r^G + t^G = n+1$. Furthermore $r^G = n-k+1$ and $t^G =
  k$ because either $r^G > n-k+1$ or $t^G > k$ contradicts the Hook
  condition. 
\end{proof}

\begin{qu}\label{indecomposable summand of largest rank} Suppose $G
  \in \calG$  realizes $(k, 1^{n-k})$; then is
  $r^G = k$ and/or $t^G = n-k+1$?
\end{qu} 

\medskip The answer is yes for a group $G$ with $\calP(G) = \calC(n,k)$ and any
group realizing some $\calS(n,k)$. 
  On the other hand, it is not known if there  are   groups realizing, for example, 
  both $(6,1,1)$ and $(2,2,2,2)$.  

\section{Fully invariant direct summands}

Recall that $\calP(X)$ denotes that set of partitions of $\rk(X)$ that
can be realized by indecomposable decompositions of $X$. Recall that
$\calS(n,k) = \{P \in \calP(n) \mid \olP \leq k \text{ and } k+|P| \leq
n+1\}$. For an indecomposable decomposition $X = \bigoplus_{\rho}
X_\rho$ let $\ptn(\bigoplus_{\rho} X_\rho) = (\ldots, \rk(X_\rho),
\ldots) \in \calP(\rk X)$.

\begin{prop}\label{direct sum of fully invariant subgroups} Suppose
  that $X = Y \oplus Z$ and that both $X$ and $Y$ are fully invariant
  in $X$. Then the following statements are true.
\begin{enumerate}
\item Suppose that $X = \bigoplus_{\omega \in \Omega} X_\omega$ is an
  indecomposable decomposition of $X$. Then there is a disjoint union
  $\Omega = \Omega_Y \cup \Omega_Z$ such that $Y = \bigoplus_{\omega \in
    \Omega_Y} X_\omega$ and $Z = \bigoplus_{\omega \in \Omega_Z} X_\omega$.
\item $\calP(X) = \calP(Y) \times \calP(Z)$.
\end{enumerate} 
\end{prop}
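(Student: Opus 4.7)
I read the hypothesis as ``both $Y$ and $Z$ are fully invariant in $X$,'' since full invariance of $X$ in itself is automatic. The plan for (1) is to use the projections of any indecomposable decomposition as endomorphisms of $X$ and exploit full invariance to force each indecomposable summand to lie entirely in $Y$ or in $Z$. Concretely, let $X=\bigoplus_{\omega\in\Omega} X_\omega$ be an indecomposable decomposition and write $\pi_\omega\in\End(X)$ for the associated projection onto $X_\omega$. Full invariance gives $\pi_\omega(Y)\subseteq Y$ and $\pi_\omega(Z)\subseteq Z$.

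For $x\in X_\omega$, the unique expression $x=y+z$ with $y\in Y$ and $z\in Z$ yields
\[
x=\pi_\omega(x)=\pi_\omega(y)+\pi_\omega(z),
\]
where $\pi_\omega(y)\in X_\omega\cap Y$ and $\pi_\omega(z)\in X_\omega\cap Z$. Since $Y\cap Z=0$, the sum is direct, and therefore $X_\omega=(X_\omega\cap Y)\oplus(X_\omega\cap Z)$. Because $X_\omega$ is indecomposable and nonzero, exactly one of these summands vanishes, so $X_\omega\subseteq Y$ or $X_\omega\subseteq Z$. Setting $\Omega_Y=\{\omega:X_\omega\subseteq Y\}$ and $\Omega_Z=\{\omega:X_\omega\subseteq Z\}$ produces a disjoint union $\Omega=\Omega_Y\cup\Omega_Z$. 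The inclusions $\bigoplus_{\omega\in\Omega_Y}X_\omega\subseteq Y$ and $\bigoplus_{\omega\in\Omega_Z}X_\omega\subseteq Z$ whose internal direct sum is $X=Y\oplus Z$ must then be equalities.

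For (2), part (1) shows that any indecomposable decomposition of $X$ restricts canonically to indecomposable decompositions of $Y$ and $Z$ with partitions $P_Y\in\calP(Y)$ and $P_Z\in\calP(Z)$ whose multiset union is the partition realized by $X$. Conversely, direct-summing indecomposable decompositions of $Y$ and $Z$ yields an indecomposable decomposition of $X$. Under the natural identification of $\calP(Y)\times\calP(Z)$ with the set of partitions built by multiset union of a partition from $\calP(Y)$ with one from $\calP(Z)$, this gives the claimed equality.

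I do not anticipate a serious obstacle; the crux is simply the observation that the projections $\pi_\omega$ are endomorphisms of $X$, which is what allows the full invariance hypothesis to bite. Indecomposability of each $X_\omega$ then supplies the dichotomy, and part (2) is a formal consequence. The only mildly delicate point is the interpretation of the Cartesian product in (2), but the multiset-union reading is the natural one given that partitions are unordered.
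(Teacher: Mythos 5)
Your proof is correct and follows essentially the same route as the paper: you establish $X_\omega = (X_\omega\cap Y)\oplus(X_\omega\cap Z)$ and invoke indecomposability, merely unpacking via the projections $\pi_\omega$ the standard fact (which the paper cites without proof) that a fully invariant subgroup decomposes along any direct decomposition. Your reading of the hypothesis as ``$Y$ and $Z$ fully invariant'' and of the product in (2) as multiset union both match the paper's intent.
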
 

\begin{proof} (1) As $Y$ and $Z$ are fully invariant in $X$ it follows
  that $Y = \bigoplus_{\omega} (Y \cap X_\omega)$ and $Z =
  \bigoplus_{\omega} (Z \cap X_\omega)$. It is easily checked that
  $X_\omega = Y \cap X_\omega \oplus Z \cap X_\omega$. As $X_\omega$
  is indecomposable it follows that either $Y \cap X_\omega =0$ and $Z
  \cap X_\omega = X_\omega$ or $Y \cap X_\omega = X_\omega$ and $Z
  \cap X_\omega = 0$. Let $\Omega_Y = \{\omega \in \Omega \mid Y \cap
  X_\omega = X_\omega\}$ and $\Omega_Z = \{\omega \in \Omega \mid Z
  \cap X_\omega = X_\omega\}$. Then $Y = \bigoplus_{\omega \in
    \Omega_Y} X_\omega$ and $Z = \bigoplus_{\omega \in \Omega_Z}
  X_\omega$ as claimed.

  (2) It follows from (1) that $\ptn(\bigoplus_{\omega \in \Omega}
  X_\omega) = \ptn(\bigoplus_{\omega \in \Omega_Y} X_\omega) \times
  \ptn(\bigoplus_{\omega \in \Omega_Z} X_\omega)$, so $\calP(X)
  \subseteq \calP(Y) \times \calP(Z)$. Conversely, if $Y =
  \bigoplus_{\omega \in \Omega_Y} Y_\omega$ and $Z = \bigoplus_{\omega \in
    \Omega_Z} Z_\omega$ are indecomposable decompositions of $Y$ and
  $Z$, then $X = (\bigoplus_{\omega \in \Omega_Y} Y_\omega) \oplus
  (\bigoplus_{\omega \in \Omega_Z} Z_\omega)$ is an indecomposable
  decomposition of $X$ and \newline
  $\ptn((\bigoplus_{\omega} Y_\omega) \oplus
  (\bigoplus_{\omega} Z_\omega)) = \ptn(\bigoplus_{\omega} Y_\omega)
  \times \ptn(\bigoplus_{\omega} Z_\omega)$, so $\calP(Y) \times
  \calP(Z) \subseteq \calP(X)$.
\end{proof}

\lemref{indecomposable fully invariant summand} and \lemref{completely
  decomposable tau homogeneous fully invariant summand} exhibit
obstructions to realization. Assuming that certain decompositions are
present, they say that certain other decompositions and certain
critical typesets are impossible (see \lemref{pinning down critical
  types}).

\begin{lem}\label{indecomposable fully invariant summand} Let $G \in
  \calG$ and suppose that $H$ is a non--zero indecomposable fully
  invariant summand of $G$. Then $H$ appears as a summand in any
  indecomposable decomposition of $G$.
\end{lem}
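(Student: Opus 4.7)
The plan is to exploit full invariance of $H$ to force every projection coming from an indecomposable decomposition of $G$ to preserve $H$, and then to use the indecomposability of $H$ and of each constituent summand to identify $H$ with exactly one of them.

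Let $G = \bigoplus_{\omega \in \Omega} X_\omega$ be any indecomposable decomposition, with projections $\pi_\omega\colon G \to X_\omega$. The first step is to show that $H = \bigoplus_{\omega \in \Omega}(X_\omega \cap H)$. This is by the same short argument used in the proof of \propref{direct sum of fully invariant subgroups}(1): each $\pi_\omega$ is an endomorphism of $G$, so by full invariance $\pi_\omega(H) \subseteq H$; combined with $\pi_\omega(H) \subseteq X_\omega$ this gives $\pi_\omega(H) \subseteq X_\omega \cap H$, and the directness is inherited from the directness of $\sum X_\omega$. Since $H$ is non-zero and indecomposable, there is a unique $\omega_0 \in \Omega$ with $X_{\omega_0} \cap H = H$, i.e., $H \subseteq X_{\omega_0}$.

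The second step is to reverse this inclusion. Write $G = H \oplus K$ and let $e\colon G \to G$ be the projection onto $H$ along $K$. Then $e(X_{\omega_0}) \subseteq H \subseteq X_{\omega_0}$, so the restriction $e|_{X_{\omega_0}}$ is an idempotent endomorphism of $X_{\omega_0}$. Because $X_{\omega_0}$ is indecomposable, this endomorphism must be either $0$ or the identity. The zero case would force $X_{\omega_0} \subseteq \ker e = K$, whence $H \subseteq X_{\omega_0} \cap K = 0$, contradicting $H \neq 0$. Hence $e$ acts as the identity on $X_{\omega_0}$, giving $X_{\omega_0} \subseteq H$, and so $X_{\omega_0} = H$ appears as a summand of $G = \bigoplus_\omega X_\omega$.

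The only mildly subtle point is that we are \emph{not} assuming the complement $K$ is fully invariant, so \propref{direct sum of fully invariant subgroups} does not apply directly to $G = H \oplus K$. The idempotent-restriction trick in the second step is what bypasses this asymmetry, leveraging indecomposability of $X_{\omega_0}$ in place of a missing invariance hypothesis on $K$.
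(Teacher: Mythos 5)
Your proof is correct and follows essentially the same route as the paper's: full invariance gives $H=\bigoplus_\omega (H\cap X_\omega)$, indecomposability of $H$ places it inside a single $X_{\omega_0}$, and indecomposability of $X_{\omega_0}$ forces equality. The only cosmetic difference is in the last step, where the paper observes directly that $H$ is a (nonzero) direct summand of $G_1$ and concludes $G_1=H$, while you phrase the same fact via the idempotent restriction $e|_{X_{\omega_0}}$; both are valid.
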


\begin{proof} Let $G = G_1 \oplus \cdots \oplus G_t$ be an
  indecomposable decomposition of $G$. As $H$ is fully invariant in
  $G$ we have $H = H \cap G_1 \oplus \cdots \oplus H \cap G_t$. As $H$
  is indecomposable, without loss of generality $H = H \cap G_1$, and
  moreover $G_1 = H \oplus H'$. But $G_1$ is also indecomposable and
  so it follows that $G_1 = H$.
\end{proof}

\begin{lem}\label{completely decomposable tau homogeneous fully
    invariant summand} Let $G \in \calG$ and suppose that $H$ is a
  non--zero completely decomposable $\tau$--homogeneous fully 
  invariant summand of $G$. Then every indecomposable decomposition of
  $G$ contains a rank--$1$ summand of type $\tau$.
\end{lem}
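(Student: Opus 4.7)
The plan is to take an arbitrary indecomposable decomposition $G = G_1 \oplus \cdots \oplus G_t$ and show that at least one of the $G_i$ must already be a rank--$1$ summand of type $\tau$. Exactly as in the proof of \lemref{indecomposable fully invariant summand}, full invariance of $H$ makes each projection $\pi_i : G \to G_i$ stabilize $H$, which yields $\pi_i(H) = H \cap G_i$ and the direct decomposition $H = \bigoplus_i (H \cap G_i)$. Since $H \ne 0$, I would fix an index $i$ with $H \cap G_i \ne 0$ and aim to show $G_i = H \cap G_i$.

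The key observation is that $H \cap G_i$ is a direct summand not just of $H$ but of the whole group $G$: it is a summand of $H$ (with complement $\bigoplus_{j \ne i} (H \cap G_j)$ coming from the decomposition just established), and $H$ is a summand of $G$ by hypothesis, so the transitivity of direct summands applies. Since $H \cap G_i \subseteq G_i$, the modular law then gives a decomposition $G_i = (H \cap G_i) \oplus G_i'$ for some subgroup $G_i' \subseteq G_i$. Indecomposability of $G_i$ together with $H \cap G_i \ne 0$ forces $G_i' = 0$, so $G_i = H \cap G_i \subseteq H$.

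Thus $G_i$ is an indecomposable direct summand of the completely decomposable $\tau$--homogeneous group $H$. The standard structure theorem for homogeneous completely decomposable groups says that every direct summand of such a group is again completely decomposable and $\tau$--homogeneous, and indecomposability then forces $\rk(G_i) = 1$. Hence $G_i$ is a rank--$1$ summand of type $\tau$ appearing in the given decomposition, proving the lemma. The only non--bookkeeping step is the transitivity--of--summands argument that promotes $H \cap G_i$ from a summand of $H$ to a summand of $G_i$; after that the chain ``indecomposable + non--zero $\Rightarrow$ equality'' and the rigidity of $\tau$--homogeneous completely decomposable groups do the rest.
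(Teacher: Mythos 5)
Your argument is correct and follows essentially the same route as the paper's proof: decompose $H = \bigoplus_i (H \cap G_i)$ via full invariance, use indecomposability of $G_i$ to conclude $G_i = H \cap G_i \subseteq H$, and then invoke the structure of summands of homogeneous completely decomposable groups. The only difference is that you spell out, via transitivity of summands and the modular law, why $H \cap G_i$ is a summand of $G_i$ --- a step the paper asserts without comment --- and this justification is valid.
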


\begin{proof} Let $G = G_1 \oplus \cdots \oplus G_t$ be an
  indecomposable decomposition of $G$. As $H$ is fully invariant in
  $G$ we have $H = H \cap G_1 \oplus \cdots \oplus H \cap G_t$.
  Without loss of generality $H \cap G_1 \neq 0$, and moreover $G_1 =
  (H \cap G_1) \oplus H'$. But $G_1$ is indecomposable and so it
  follows that $G_1 = H \cap G_1$. This means that $G_1 \subseteq H$
  and furthermore $G_1$ is a direct summand of the $\tau$--homogeneous
  completely decomposable group $H$, hence is itself
  $\tau$--homogeneous completely decomposable. Being indecomposable,
  $G_1$ has rank $1$ and type $\tau$.
\end{proof}

\section{$\calB$--groups}

This section lists essential details on block--rigid groups $X$ with
cyclic regulator quotient, i.e., groups in $\calB$, also referred to
as \lq $\calB$--groups \rq or \lq block--rigid crq--groups\rq.  
  We will use greatest common divisors $\gcd^A(e,a)$ where $e$ is a
  positive integer and $a$ is an element of the torsion-free group
  $A$. Greatest common divisors are explained in
  \cite[Section~11.2]{MA00} in great generality and they have the
  usual properties. In our special case $\gcd^A(e,a) = d$ when $m =
  \ord(a+eA)$ and $e = m d$.

A $\calB$--group $X$ has a representation
\begin{equation}\label{eq: representation BRCRQ}
  X = A + \Z e\invs a, \quad A = \bigoplus_{\rho \in \Tc(A)}
  A_\rho, \quad a = \sum_{\rho \in \Tc(A)} a_\rho, \quad a_\rho \in
  A_\rho
\end{equation}
with the following properties
\begin{equation}
  [X:A] = e \text{ if and only if } {\gcd}^A(e,a)=1,
\end{equation} 
\begin{equation}
  A = \R(X) \text{ if and only if for all }
   \tau \in \Tc(A),\  
  A(\tau) = A_\tau = X(\tau). \text{ (\cite[4.5.1]{MA00})}  
\label{eq: regulator criterion 0} 
\end{equation}

There is a more practical version of the Regulator Criterion
\eqref{eq: regulator criterion 0}.

\begin{lem}\label{regulator criterion 1} {\rm (Regulator Criterion)} 
  Assume that $\gcd^A(e,a)=1$ in \eqref{eq: representation BRCRQ}.
  Then $A = \R(X)$ if and only if for all $\tau \in \Tc(A),\ 
  \gcd^A(e,\sum_{\rho \neq \tau} a_\rho) = 1$.
\end{lem}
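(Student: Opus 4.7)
The plan is to derive the stated criterion directly from the earlier form \eqref{eq: regulator criterion 0}. Because $X \in \calB$ is block--rigid, $\Tc(A)$ is an antichain, so in the completely decomposable group $A$ the identity $A(\tau) = A_\tau$ holds automatically for every $\tau \in \Tc(A)$. The criterion $A = \R(X)$ therefore collapses to the family of conditions $X(\tau) = A_\tau$, $\tau \in \Tc(A)$, and since $A_\tau \subseteq X(\tau)$ is trivial, the content lies in translating the reverse inclusion into the $\gcd^A$ language.

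Next I would parametrize $X$: the hypothesis $\gcd^A(e,a) = 1$ guarantees $[X:A] = e$, so every $x \in X$ has a unique expression $x = y + k e\invs a$ with $y = \sum_{\rho} y_\rho \in A$ and $0 \leq k < e$. Since $A$ has finite index in $X$, $p$--heights at every prime differ between $A$ and $X$ by a bounded amount, and hence $\tp^X(x) = \tp^X(ex) = \tp^A(ex)$. Expanding $ex = \sum_{\rho}(e y_\rho + k a_\rho)$ in the homogeneous decomposition of $A$ and invoking the antichain property of $\Tc(A)$, the element $ex$ lies in $A_\tau$ precisely when $e y_\rho + k a_\rho = 0$ for every $\rho \neq \tau$. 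The case $k = 0$ just returns $y \in A_\tau$ by purity of $A_\tau$ in $A$, so $X(\tau) \supsetneq A_\tau$ if and only if there exist $k \in \{1,\ldots,e-1\}$ and $z \in A$ with $ez + k \sum_{\rho \neq \tau} a_\rho = 0$.

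Writing $a'_\tau := \sum_{\rho \neq \tau} a_\rho$, the last condition is solvable for some such $k$ if and only if $\ord(a'_\tau + eA) < e$, which by the definition of $\gcd^A$ recalled just before the lemma is equivalent to $\gcd^A(e, a'_\tau) \neq 1$. Hence $X(\tau) = A_\tau$ iff $\gcd^A(e, \sum_{\rho \neq \tau} a_\rho) = 1$, and quantifying over $\tau \in \Tc(A)$ yields the lemma.

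The main obstacle I foresee is pinning down the equivalence $x \in X(\tau) \Leftrightarrow ex \in A_\tau$ cleanly: one must marshal the finite--index relationship between $A$ and $X$, the invariance of type under multiplication by a nonzero integer, and the antichain structure of $\Tc(A)$, in order to confirm that testing membership in $A_\tau$ inside the direct--sum decomposition of $A$ captures exactly the type condition defining $X(\tau)$. Once that bridge is built, the remainder of the argument is a short calculation in $A/eA$.
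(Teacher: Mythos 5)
Your argument is correct, and it arrives at the same two intermediate facts as the paper's proof --- that by \eqref{eq: regulator criterion 0} and the antichain hypothesis the condition $A = \R(X)$ reduces to $X(\tau) = A_\tau$ for all $\tau \in \Tc(A)$, and that each of these conditions is equivalent to $\gcd^A(e, \sum_{\rho \neq \tau} a_\rho) = 1$ --- but by a genuinely different route for the second step. The paper obtains it by citing the Purification Lemma of \cite[3.1.2, 11.4.1]{MA00}, which gives $\left(A_\tau\right)_*^X = A_\tau + \Z e_\tau^{-1} a_\tau$ with $e_\tau = \gcd^A(e, \sum_{\rho\neq\tau} a_\rho)$ outright, and then forces $e_\tau = 1$ in the forward direction by a short divisibility argument from the purity of $X(\tau)$ and the hypothesis $\gcd^A(e,a)=1$. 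You instead recompute the content of that lemma by hand: parametrizing $X$ via the cyclic quotient $X/A$, using that types are unchanged by multiplication by $e$ and by passage between $X$ and the finite--index subgroup $A$, and testing membership of $ex$ in $A_\tau$ componentwise, which converts $X(\tau) \supsetneq A_\tau$ into the solvability of $ez + k\sum_{\rho\neq\tau}a_\rho = 0$ for some $1 \leq k \leq e-1$, i.e.\ into $\ord(\sum_{\rho\neq\tau}a_\rho + eA) < e$. This buys a self--contained, elementary proof at the cost of length; note that $\gcd^A(e,a)=1$ is still essential in your version too (it is what guarantees that the witness $x = z + k e^{-1}a$ lies outside $A$, hence outside $A_\tau$, in the backward implication), so the hypothesis is not an artifact of the paper's method. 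The step you flag as delicate, $x \in X(\tau) \Leftrightarrow ex \in A_\tau$, does go through exactly as you describe, precisely because $\Tc(A)$ is an antichain and $eX \subseteq A$.
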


\begin{proof} Set $B := A_\tau$, $C = \bigoplus_{\rho \neq \tau}
  A_\rho$ and $c = \sum_{\rho \neq \tau} a_\rho$. Then $A = B \oplus
  C$. By the Purification Lemma (\cite[3.1.2,11.4.1]{MA00}), $B_*^X =
  B + \Z e_\tau\invs a_\tau$ where $e_\tau = \gcd^A(e,c)$. 
  
 % {\color{red} Should we say \lq where $B_*^X$ means the purification of $B$ in $X$ \rq ?}
  
   So $e_\tau
  \mid e$ and $c = e_\tau c'$ for some $c' \in A$. Suppose that $A =
  \R(X)$. Then $B = X(\tau)$ is pure in $X$, hence $e_\tau\invs a_\tau
  \in A_\tau \subseteq A$ and $e_\tau(e_\tau\invs a_\tau + c') = a_\tau
  + c = a$. By assumption $\gcd^A(e,a) = 1$, so $e_\tau = 1$.
  Conversely, if $\forall\, \tau \in \Tc(A): e_\tau = 1$, then
  $\forall\, \tau \in \Tc(A): A_\tau = \left(A_\tau\right)_*^X =
  X(\tau)$ and $A = \R(X)$.
\end{proof}

We next restate well--known invariants and the classification of
$\calB$--groups. 

\begin{defn}\label{defn: m} {\rm \cite[Definition~12.6.2]{MA00}} Let
  $X$ be as in \eqref{eq: representation BRCRQ}. Let $e$ be any
  positive integer such that $eX \subset A$. Let $\br \: A \ra A/eA =
  \ol{A}$ be the natural epimorphism. For any type $\tau$, set
  \[
  \mu_\tau(X) = \ord(\ol{a_\tau}) = \ord(a_\tau + e A).
  \]
\end{defn}

The values $\mu_\tau(X)$, do not depend on the choices of $a$ and $e$
involved in their definition (\cite[Lemma~12.6.3]{MA00}).

The classification theorem for block--rigid crq--groups is easy to
state. 

\begin{thm}\label{near iso criterion} {\rm
    \cite[Theorem~12.6.5]{MA00}\ (Near--Classification)}
    Let $X$ and $Y$ be block--rigid crq--groups. Then $X \nriso Y$ if
    and only if $\R(X) \cong \R(Y)$ and $\mu_\tau(X) = \mu_\tau(Y)$
    for all types $\tau$.
\qed\end{thm}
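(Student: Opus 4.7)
The plan is to prove the two directions separately.

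For necessity, recall that near-isomorphism of finite-rank torsion-free groups is characterized by $X \otimes \Z_{(p)} \cong Y \otimes \Z_{(p)}$ for every prime $p$ (a standard consequence of Arnold's theory cited in Section~\ref{BRCRQ}). Such $p$-local isomorphisms carry fully invariant subgroups to fully invariant subgroups; since $\R(X)$ is intrinsically the intersection of the regulating subgroups of $X$ and is fully invariant, one gets $\R(X) \cong \R(Y)$. The invariants $\mu_\tau$ are, by the already-cited \cite[Lemma~12.6.3]{MA00}, independent of the choices of $a$ and $e$ in \eqref{eq: representation BRCRQ}, so they descend to invariants of each $p$-local isomorphism class and must agree on $X$ and $Y$.

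For sufficiency, identify $A := \R(X)$ with $\R(Y)$ via the given isomorphism, and write $X = A + \Z e^{-1} a$ and $Y = A + \Z e^{-1} b$ in the form \eqref{eq: representation BRCRQ} with a common denominator $e$ (this is possible because the order of $X/A$ is determined by the $\mu_\tau$ together with \lemref{regulator criterion 1}, and likewise for $Y/A$). Using the $p$-local characterization of near-isomorphism, it suffices to produce for each prime $p$ an automorphism $\phi$ of $A \otimes \Z_{(p)}$ with $\phi(a) \equiv u b \pmod{eA}$ for some unit $u \in \Z_{(p)}^{\times}$; such a $\phi$ extends to a $p$-local isomorphism by sending $e^{-1} a \mapsto e^{-1} u^{-1} \phi(a)$. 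Because $\Tc(A)$ is an antichain, each $A_\rho$ is fully invariant in $A$, so $\phi$ is forced to decompose as $\bigoplus_\rho \phi_\rho$ with $\phi_\rho \in \Aut(A_\rho \otimes \Z_{(p)})$, and the task reduces block by block to producing $\phi_\rho$ sending $a_\rho$ to a unit multiple of $b_\rho$ modulo $eA_\rho$.

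The main obstacle is exactly this block-wise orbit problem: $a_\rho$ and $b_\rho$ share the same order $\mu_\rho$ modulo $eA_\rho$ by hypothesis, and \lemref{regulator criterion 1} guarantees that each generates a primitive cyclic factor of $A_\rho / eA_\rho$; I need to show that these data alone suffice to produce $\phi_\rho$. I would reduce to Smith normal form over the local PID $\Z_{(p)}$: extend $a_\rho$ to a $\Z_{(p)}$-basis of $A_\rho \otimes \Z_{(p)}$, do the same for $b_\rho$, and compare the matrices that express the inclusion of $eA_\rho \otimes \Z_{(p)}$ in each basis. Their elementary divisors are determined entirely by $\mu_\rho$, $e$, and $\rk(A_\rho)$, so the two matrices are equivalent and a suitable $\phi_\rho$ can be read off. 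Assembling the $\phi_\rho$'s gives the required $\phi$ and, with it, the $p$-local isomorphism of $X$ and $Y$ for every $p$, completing the proof.
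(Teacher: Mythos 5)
The paper never proves this statement: it is imported verbatim from \cite[Theorem~12.6.5]{MA00} and stated with the proof omitted, so your argument can only be judged on its own terms. Your sufficiency half is a workable sketch. Once $\R(X)$ and $\R(Y)$ are identified with a common $A$, both groups are quasi-equal to $A$, the common index is $e=\lcm_\tau\mu_\tau$, and the block-by-block construction over $\Z_{(p)}$ does succeed: for $p\tau\neq\tau$, writing $p^k$ and $p^m$ for the $p$-parts of $e$ and $\mu_\rho$, the element $a_\rho$ is $p^{k-m}$ times a primitive element of the free $\Z_{(p)}$-module $A_\rho\otimes\Z_{(p)}$, and likewise $b_\rho$, so an automorphism carries one exactly to the other (the unit $u$ is not even needed); for $p\tau=\tau$ the block is $p$-divisible and there is nothing to do. For groups that are quasi-equal to a common completely decomposable subgroup, the passage from ``isomorphic after localizing at every prime'' back to near-isomorphism is legitimate.

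The necessity half, however, rests on a false premise. Near-isomorphism of finite rank torsion-free groups is \emph{not} characterized by $X\otimes\Z_{(p)}\cong Y\otimes\Z_{(p)}$ for all $p$: the group $\Z$ and a rank-one group whose type has $p$-height $1$ at every prime are locally isomorphic at every prime, yet they are not even quasi-isomorphic, hence not nearly isomorphic. Only the implication you actually need here (near-isomorphism implies local isomorphism at every prime) survives, but then your deduction of $\R(X)\cong\R(Y)$ collapses: from $\R(X)\otimes\Z_{(p)}\cong\R(Y)\otimes\Z_{(p)}$ for all $p$ one cannot conclude $\R(X)\cong\R(Y)$, because completely decomposable groups that are locally isomorphic at every prime need not be isomorphic (same counterexample). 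The standard repair is different: near-isomorphism implies quasi-isomorphism, and the regulator of an almost completely decomposable group is determined up to isomorphism by the critical typeset together with the ranks of its homogeneous components, all of which are quasi-isomorphism invariants. Similarly, \cite[Lemma~12.6.3]{MA00} only says that $\mu_\tau$ is independent of the chosen representation, i.e.\ that it is an \emph{isomorphism} invariant; that it is a \emph{near-isomorphism} invariant is exactly what this direction must establish, and your phrase ``descend to invariants of each $p$-local isomorphism class'' asserts this rather than proving it (one must still show that the $p$-part of $\mu_\tau$ can be read off from $X\otimes\Z_{(p)}$ intrinsically, which requires locating the localized regulator inside the localized group).
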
 

\thmref{product formula} says in particular that direct summands of
$\calB$--groups are again $\calB$--groups and relates the invariants
of group and summands. 

\begin{thm}\label{product formula} {\rm \cite[Theorem~13.1.1]{MA00}
  (Product Formula)} 
  Let $X$ be a block--rigid crq--group with regulator $R$. Suppose $X
  = X_1 \oplus X_2$ is a direct decomposition of $X$. Then each $X_i$
  is a block--rigid crq--group with regulator $R_i = X_i \cap R$, and 
  $$
  \frac{X}{R} \cong \frac{X_1}{R_1} \oplus \frac{X_2}{R_2}
  $$
  is a decomposition of the cyclic group $X/R$. The groups
  $X_i/R_i$ are cyclic and have relatively prime orders.
  Furthermore, 
  $$
  \mu_\tau(X) = \mu_\tau(X_1)\cdot \mu_\tau(X_2).
  $$
\qed\end{thm}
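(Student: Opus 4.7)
The plan is to build the proof in four stages, matching the four claims of the Product Formula: splitting the regulator along the decomposition, identifying it as the regulator of each summand, decomposing the cyclic quotient, and finally verifying the multiplicative formula for $\mu_\tau$.

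First I would use that $R = \R(X)$ is fully invariant in $X$ and therefore splits across any direct decomposition: $R = (R \cap X_1) \oplus (R \cap X_2)$. Setting $R_i := R \cap X_i$, each $R_i$ is a direct summand of the completely decomposable group $R$, hence itself completely decomposable with critical typeset contained in $\Tc(R)$. Since $\Tc(R)$ is an antichain, so is $\Tc(R_i)$, showing $X_i$ is block--rigid once we know $R_i$ is its regulator. Furthermore $[X_i:R_i]$ divides $[X:R]$, so $X_i/R_i$ is finite.

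Next, to confirm $R_i = \R(X_i)$, I would invoke the regulator criterion \eqref{eq: regulator criterion 0}: for every $\tau \in \Tc(R_i)$ one has $(R_i)_\tau = R_\tau \cap X_i = X(\tau) \cap X_i = X_i(\tau)$, using that intersection with a summand preserves $\tau$--purification. For the quotient part, the standard isomorphism $X/R = (X_1 \oplus X_2)/(R_1 \oplus R_2) \cong X_1/R_1 \oplus X_2/R_2$ exhibits the cyclic group $X/R$ as a direct sum of two cyclic groups; a finite cyclic group splits as a direct sum of two cyclic subgroups precisely when their orders are coprime, giving $\gcd(e_1, e_2) = 1$ where $e_i := [X_i : R_i]$ and $e_1 e_2 = e := [X:R]$.

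Finally, for the product formula, I would set up compatible standard presentations. Using the projections $\pi_i : X \to X_i$, write $a = a^{(1)} + a^{(2)}$ with $a^{(i)} = \pi_i(a) \in X_i$, and componentwise $a_\tau = a^{(1)}_\tau + a^{(2)}_\tau$ with $a^{(i)}_\tau \in (R_i)_\tau$. Because $e^{-1}a \in X$ projects to $e^{-1}a^{(i)} \in X_i$ and $X_i/R_i$ has order $e_i$, divisibility shows $a^{(i)} = (e/e_i)\, b^{(i)}$ for some $b^{(i)} \in R_i$, and that $b^{(i)}/e_i$ generates $X_i$ over $R_i$ in standard form; then $\mu_\tau(X_i) = \ord\bigl(b^{(i)}_\tau + e_i R_i\bigr)$ where $b^{(i)}_\tau = a^{(i)}_\tau/(e/e_i)$. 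Computing $\mu_\tau(X) = \ord(a_\tau + eR)$ in $R/eR = R_1/eR_1 \oplus R_2/eR_2$ gives $\mu_\tau(X) = \lcm\bigl(\ord(a^{(1)}_\tau + eR_1),\, \ord(a^{(2)}_\tau + eR_2)\bigr)$, and a short calculation in $R_i/eR_i$ identifies each factor with $\mu_\tau(X_i)$; since $\mu_\tau(X_i) \mid e_i$ and $\gcd(e_1,e_2) = 1$, the $\lcm$ becomes the product.

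The main obstacle, as usual with $\calB$--group arguments, is the bookkeeping in the last step: one must carefully relate the single distinguished element $a$ used to present $X = A + \Z e\invs a$ to the analogous elements in the two summands, and convert an $\lcm$ into a product via the coprimality extracted from the cyclic quotient decomposition. Steps 1--3 are essentially formal once one invokes full invariance of the regulator and the regulator criterion.
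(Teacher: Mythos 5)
Your proof is correct. Note that the paper itself gives no argument for this theorem: it is quoted from \cite[Theorem~13.1.1]{MA00} and marked with a qed, so there is no internal proof to compare against. Your reconstruction follows the standard route and all the steps check out: full invariance of the regulator gives $R = (R\cap X_1)\oplus(R\cap X_2)$; the regulator criterion \eqref{eq: regulator criterion 0} applied componentwise (using $R(\tau)=R_1(\tau)\oplus R_2(\tau)$ and $X(\tau)=X_1(\tau)\oplus X_2(\tau)$) identifies $R_i$ as $\R(X_i)$; the uniqueness of subgroups of each order in a finite cyclic group forces $\gcd(e_1,e_2)=1$; and the order computation in $R/eR = R_1/eR_1\oplus R_2/eR_2$ yields $\mu_\tau(X)=\lcm(\mu_\tau(X_1),\mu_\tau(X_2))$, which coprimality converts to a product. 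The only step worth making fully explicit is the identification $\ord(a^{(1)}_\tau+eR_1)=\ord(b^{(1)}_\tau+e_1R_1)$: since $a^{(1)}_\tau=e_2 b^{(1)}_\tau$ and $R_1$ is torsion--free, multiplication by $e_2$ induces an order--preserving injection $R_1/e_1R_1\to R_1/eR_1$; alternatively one may simply invoke the stated independence of $\mu_\tau$ from the choice of the integer $e$ in \defref{defn: m}.
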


\lemref{direct sums in B} deals with direct sums in $\calB$ and their
regulators.  This lemma justifies the direct sums that we use in
  the later sections.

\begin{lem}\label{direct sums in B} Let $X_1, X_2 \in \calB$ with
  regulators $R_1, R_2$ respectively. Assume that $\Tc(X_1) \cup
  \Tc(X_2)$ is an antichain and $\gcd([X_1:R_1],[X_2:R_2])=1$. Then $X
  = X_1 \oplus X_2 \in \calB$ and $\R(X) = R_1 \oplus R_2$.
\end{lem}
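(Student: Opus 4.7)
The plan is to place $X$ in the standard $\calB$-representation \eqref{eq: representation BRCRQ} with completely decomposable subgroup $A := R_1 \oplus R_2$, and then apply the practical Regulator Criterion \lemref{regulator criterion 1} to identify $A$ with $\R(X)$.

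First, I write each $X_i = R_i + \Z e_i\invs a_i$ as in \eqref{eq: representation BRCRQ}, with $e_i = [X_i:R_i]$ and $a_i \in R_i$. Since $X = X_1 \oplus X_2$, we have $X/A \cong X_1/R_1 \oplus X_2/R_2$, a cyclic group of order $e := e_1 e_2$ because its two summands are cyclic of coprime orders. A generator modulo $A$ is $e\invs a$ with $a := e_2 a_1 + e_1 a_2 \in A$, so $X = A + \Z e\invs a$; the equality $[X:A] = e$ then forces $\gcd^A(e,a) = 1$ by the first clause of \eqref{eq: representation BRCRQ}. The homogeneous expansion $a = \sum_\rho(e_2 a_{1,\rho} + e_1 a_{2,\rho})$ is exactly the one required by \eqref{eq: representation BRCRQ}.

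The crux is to verify \lemref{regulator criterion 1}: for every $\tau \in \Tc(A) = \Tc(R_1) \cup \Tc(R_2)$, the class of $c := \sum_{\rho \neq \tau} a_\rho$ in $A/eA$ must have order exactly $e$. I will compute the order componentwise in $A/eA = R_1/eR_1 \oplus R_2/eR_2$: the $R_i$-component of $c$ is $e_{3-i}(a_i - a_{i,\tau})$, and applying \lemref{regulator criterion 1} to $X_i$ (or, when $\tau \notin \Tc(R_i)$, the bare hypothesis $\gcd^{R_i}(e_i, a_i) = 1$) shows that $a_i - a_{i,\tau}$ has order $e_i$ in $R_i/e_i R_i$, whence $e_{3-i}(a_i - a_{i,\tau})$ has order exactly $e_i$ in $R_i/eR_i$. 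The coprimality $\gcd(e_1,e_2) = 1$ then lifts these componentwise orders via $\lcm$ to order $e_1 e_2 = e$ in $A/eA$, so $\gcd^A(e,c) = 1$ as required, and $\R(X) = A$.

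Block-rigidity of $X$ follows because $\Tc(\R(X)) \subseteq \Tc(X_1) \cup \Tc(X_2)$ is an antichain by hypothesis, while $X/\R(X)$ has already been shown to be cyclic; hence $X \in \calB$. The only step of any substance is the order computation in the previous paragraph: the use of \lemref{regulator criterion 1} for each $X_i$ together with the coprimality $\gcd(e_1,e_2) = 1$ are precisely the ingredients that make the regulator of $X$ split along the given direct sum.
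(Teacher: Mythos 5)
Your proof is correct, but it takes a genuinely different and more computational route than the paper. The paper's argument is two lines: it checks the abstract Regulator Criterion \eqref{eq: regulator criterion 0} directly, using that type subgroups distribute over direct sums, so $R(\tau)=R_1(\tau)\oplus R_2(\tau)=X_1(\tau)\oplus X_2(\tau)=X(\tau)$ for each $\tau$ in the (antichain) critical typeset; no explicit generator of $X$ over $R_1\oplus R_2$ is ever needed. You instead build the standard representation $X=A+\Z e\invs(e_2a_1+e_1a_2)$ with $e=e_1e_2$ and verify the practical criterion of \lemref{regulator criterion 1} by an order computation in $A/eA=R_1/eR_1\oplus R_2/eR_2$; the key steps --- that $\ord\bigl(e_{3-i}(a_i-a_{i,\tau})+eR_i\bigr)=\ord\bigl((a_i-a_{i,\tau})+e_iR_i\bigr)=e_i$ in a torsion-free group, and that coprime componentwise orders combine via $\lcm$ to give $e$ --- are all sound, including your treatment of the case $\tau\notin\Tc(R_i)$ via $\gcd^{R_i}(e_i,a_i)=1$. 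What your approach buys is that it simultaneously establishes the explicit description of the direct sum and its generator, which the paper defers to \lemref{get summands back} and \corref{direct sum of two groups}; what it costs is length and the (correctly handled) bookkeeping over homogeneous components that may receive contributions from both $R_1$ and $R_2$ when $\Tc(R_1)$ and $\Tc(R_2)$ overlap. The paper's route is the more economical one if the only goal is the statement of the lemma itself.
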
 

\begin{proof} Set $R = R_1 \oplus R_2$. Then $\frac{X}{R} \cong
  \frac{X_1}{R_1} \oplus \frac{X_2}{R_2}$ is a finite cyclic group of
  order $[X_1:R_1][X_2:R_2]$ and $\Tc(X) = \Tc(R) = \Tc(R_1) \cup
  \Tc(R_2) = \Tc(X_1) \cup \Tc(X_2)$ is an antichain. For any $\tau
  \in \Tc(X)$, we have $R(\tau) = R_1(\tau) \oplus R_2(\tau) =
  X_1(\tau) \oplus X_2(\tau) = X(\tau)$, so $\R(X) = R$ by
  \eqref{eq: regulator criterion 0}. So $X \in \calB$.
\end{proof} 

A criterion for the absence of rank--one direct summands is needed.

\begin{prop}\label{clipped iff} {\rm \cite[13.1.5]{MA00}} A
  block--rigid crq--group $X$ is clipped if and only if $X$ is rigid
  and $\mu_\tau(X) > 1$ for all $\tau \in \Tc(X)$.  
\qed\end{prop}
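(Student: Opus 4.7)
The plan is to prove both directions using the Product Formula (\thmref{product formula}) together with direct manipulation of the standard representation \eqref{eq: representation BRCRQ} with $A = \R(X)$.

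For ($\Leftarrow$), I assume $X$ is rigid with $\mu_\tau(X) > 1$ for every $\tau \in \Tc(X)$, and suppose for contradiction that $X = A_0 \oplus Y$ with $\rk(A_0) = 1$. Let $\tau := \tp(A_0) \in \Tc(X)$. Since $A_0$ is completely decomposable, $\R(A_0) = A_0$ and $\mu_\sigma(A_0) = 1$ for every type $\sigma$. The Product Formula gives $\R(X) = A_0 \oplus \R(Y)$, and separating the type-$\tau$ homogeneous component yields $R_\tau = A_0 \oplus R_\tau(Y)$. Rigidity forces $\rk R_\tau = 1$, hence $R_\tau(Y) = 0$, whence $\mu_\tau(Y) = 1$. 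Then $\mu_\tau(X) = \mu_\tau(A_0)\,\mu_\tau(Y) = 1$, contradicting the hypothesis.

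For ($\Rightarrow$), I prove the contrapositive: if $X$ is not rigid or if $\mu_\tau(X) = 1$ for some $\tau \in \Tc(X)$, then $X$ has a rank-$1$ summand. Suppose first that $\mu_\tau(X) = 1$ for some $\tau \in \Tc(X)$. Then $a_\tau \in eA \cap A_\tau = eA_\tau$, so $a_\tau = eb_\tau$ with $b_\tau \in A_\tau$. Writing $a' := a - a_\tau$ and $A' := \bigoplus_{\rho \neq \tau} A_\rho$, the identity $e\invs a = b_\tau + e\invs a'$ together with the fact that $A' + \Z e\invs a'$ has zero $\Q A_\tau$-component give $X = A_\tau \oplus (A' + \Z e\invs a')$, so the nonzero completely decomposable summand $A_\tau$ contributes a rank-$1$ summand of $X$. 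Suppose instead that $X$ is not rigid and $\mu_\tau(X) > 1$ for every $\tau \in \Tc(X)$. Pick $\tau$ with $\rk A_\tau \geq 2$; then $a_\tau \neq 0$, and the pure rank-$1$ subgroup $B_\tau$ of $A_\tau$ generated by $a_\tau$ splits off as $A_\tau = B_\tau \oplus C_\tau$ with $C_\tau \neq 0$. One verifies that $X = (B_\tau + A' + \Z e\invs a) \oplus C_\tau$, whence any rank-$1$ summand of $C_\tau$ is a rank-$1$ summand of $X$.

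I expect the main obstacle to be the second sub-case of ($\Rightarrow$): unlike the first sub-case, here $B_\tau$ must remain on the side carrying $\Z e\invs a$ (because $a_\tau \notin eA$), so the disjointness $C_\tau \cap (B_\tau + A' + \Z e\invs a) = 0$ must be checked by projecting an intersection element first onto $\Q A'$, to pin down the coefficient of $e\invs a$, and then onto $\Q A_\tau = \Q B_\tau \oplus \Q C_\tau$, to conclude using $a_\tau \in B_\tau$.
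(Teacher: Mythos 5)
Your argument is correct. Note, though, that the paper offers no proof of this proposition to compare against: it is quoted verbatim from \cite[13.1.5]{MA00} and stamped with \qed, so you have supplied a self-contained derivation where the paper only cites. Your two halves are sound: the ($\Leftarrow$) direction correctly combines the Product Formula with rigidity (which forces $R_\tau(Y)=0$ and hence $\mu_\tau(Y)=1$) to contradict $\mu_\tau(X)>1$; and in ($\Rightarrow$) the split into the two sub-cases (some $\mu_\tau=1$, versus some $\rk A_\tau\geq 2$ with all $\mu_\tau>1$) is exhaustive because block-rigidity already supplies the antichain condition, so failure of rigidity can only mean a homogeneous component of rank $\geq 2$. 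Two small points you should make explicit: first, the splitting $A_\tau = B_\tau \oplus C_\tau$ rests on Baer's lemma that a pure subgroup of a finite-rank homogeneous completely decomposable group is a direct summand (this deserves a citation, e.g.\ to \cite{F2}); second, your directness check in the last sub-case is more elaborate than necessary --- since $a_\tau \in B_\tau$ gives $e\invs a \in \Q B_\tau \oplus \Q A'$, a single projection onto $\Q C_\tau$ along $\Q B_\tau \oplus \Q A'$ kills the intersection immediately, with no need to first pin down the coefficient of $e\invs a$ via $\Q A'$.
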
 

The indecomposability of a $\calB$--group is easily recognized by
associating a certain graph with the group.  

\begin{defn}
  \textup{(Blagoveshchenskaya)} Let $X$ be a rigid $\calB$--group. The
  {\dbf frame} of $X$ is a graph whose vertices are the elements of
  $\Tc(X)$ and two vertices $\sigma$ and $\tau$ are joined by an edge
  if and only if $\gcd(\mu_\sigma(X),\mu_\tau(X)) > 1$.
\end{defn}

\begin{prop}\label{indecomposable via frame}
  {\rm \cite[Corollary~13.1.10]{MA00}} A $\calB$--group is
  indecomposable if and only if it is rigid and its frame is
  connected.
\qed\end{prop}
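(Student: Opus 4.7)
The plan is to establish the two implications of \propref{indecomposable via frame} separately, using the Product Formula (\thmref{product formula}) for the ``if'' direction and an explicit splitting of the standard representation \eqref{eq: representation BRCRQ} for the ``only if'' direction.

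For the ``if'' direction I would assume $X$ is rigid with connected frame and argue by contradiction. If $X = X_1 \oplus X_2$ is a non-trivial decomposition, then \thmref{product formula} gives $X_i \in \calB$, the cyclic-group isomorphism $X/\R(X) \cong X_1/R_1 \oplus X_2/R_2$ forcing $\gcd([X_1:R_1],[X_2:R_2]) = 1$, and $\mu_\tau(X) = \mu_\tau(X_1)\mu_\tau(X_2)$ for every type $\tau$. Rigidity forces $\Tc(X)$ to split as a disjoint union $\Tc(X_1) \cup \Tc(X_2)$, with $\mu_\tau(X_{3-i}) = 1$ whenever $\tau \in \Tc(X_i)$, so $\mu_\tau(X) \mid [X_i:R_i]$ for $\tau \in \Tc(X_i)$. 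Coprimality of the two indices then yields $\gcd(\mu_\sigma(X),\mu_\tau(X)) = 1$ whenever $\sigma \in \Tc(X_1)$ and $\tau \in \Tc(X_2)$, so no frame edge crosses this cut, contradicting connectedness.

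For the ``only if'' direction I would first invoke \propref{clipped iff}: an indecomposable $\calB$--group $X$ of rank greater than $1$ must be clipped, hence rigid with $\mu_\tau(X) > 1$ for every $\tau \in \Tc(X)$ (the rank-one case is trivial). If the frame were disconnected one would take a component-based partition $\Tc(X) = T_1 \cup T_2$ with $\gcd(\mu_\sigma,\mu_\tau) = 1$ whenever $\sigma \in T_1$ and $\tau \in T_2$. Fixing the representation $X = A + \Z e\invs a$ of \eqref{eq: representation BRCRQ} with $A = \R(X)$ and $e = [X:A]$, set $e_i = \lcm\{\mu_\tau(X) : \tau \in T_i\}$; then $e = e_1 e_2$ with $\gcd(e_1,e_2) = 1$. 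Writing $A^{(i)} = \bigoplus_{\tau \in T_i} A_\tau$ and $a^{(i)} = \sum_{\tau \in T_i} a_\tau$, I would propose the candidate summands $X_i := A^{(i)} + \Z e_i\invs a^{(i)}$.

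The main obstacle is to verify that these $X_i$ actually sit inside $X$ and that $X = X_1 \oplus X_2$ internally. The key technical point is the divisibility claim $a^{(i)} \in e_{3-i} A^{(i)}$: for $\tau \in T_2$ one has $\mu_\tau a_\tau \in eA \cap A_\tau = eA_\tau$, and since $\mu_\tau \mid e_2$ while $\gcd(\mu_\tau,e_1) = 1$, this forces $a_\tau \in e_1 A_\tau$; summing over $T_2$ yields $a^{(2)} \in e_1 A^{(2)}$, and the symmetric argument handles $a^{(1)}$. Consequently $e_1\invs a^{(2)}$ and $e_2\invs a^{(1)}$ lie in $A$, and a B\'ezout identity $u e_1 + v e_2 = 1$ rewrites $e\invs a = u\, e_2\invs a + v\, e_1\invs a$ as a $\Z$--combination of elements of $X_1$ and $X_2$, so $X \subseteq X_1 + X_2$. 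Finally $X_1 \cap X_2 \subseteq \Q A^{(1)} \cap \Q A^{(2)} = 0$, delivering the forbidden non-trivial decomposition and completing the contradiction.
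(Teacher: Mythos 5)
Your argument is correct, but note that the paper does not prove this proposition at all: it is quoted from \cite[Corollary~13.1.10]{MA00} and closed with an immediate \qed, so there is no in-paper proof to match. What you have supplied is a self-contained derivation from the tools the paper does make available, and both halves check out. The ``if'' direction is the standard one: the Product Formula forces $\Tc(X)=\Tc(X_1)\sqcup\Tc(X_2)$ under rigidity (each $R_\tau=X_1(\tau)\oplus X_2(\tau)$ has rank one), each $\mu_\tau(X)$ then divides the index of the summand containing $\tau$, and coprimality of the two indices kills every frame edge across the cut. The ``only if'' direction is the substantive part, and your divisibility claim is the right key: from $\mu_\tau a_\tau\in eA_\tau$ and $\mu_\tau\mid e_2$ one gets $a_\tau=(e/\mu_\tau)c_\tau\in e_1A_\tau$ for $\tau\in T_2$ (the coprimality of $\mu_\tau$ with $e_1$ is not even needed here, since $e=e_1e_2$ already makes $e/\mu_\tau$ a multiple of $e_1$), and likewise $a^{(1)}\in e_2A^{(1)}$; the identity $e=\lcm_\tau\mu_\tau$, which follows from $\gcd^A(e,a)=1$ and the direct-sum structure of $A/eA$, justifies $e=e_1e_2$. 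The only step you leave implicit is the reverse containment $X_1+X_2\subseteq X$, which is needed to conclude $X=X_1\oplus X_2$ rather than merely $X\subseteq X_1\oplus X_2$; it follows from the same facts, since $e_1\invs a^{(1)}=e_2\,(e\invs a)-e_1\invs a^{(2)}\in X$ because $e_1\invs a^{(2)}\in A$. With that one line added, the proof is complete and is exactly the kind of argument the cited source gives.
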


Recall  that any completely decomposable group $A$
of rank~$r$ can be written in the form $A = \bigoplus_{1 \le i \le r}
A_i = \bigoplus_{1 \le i \le r} \tau_i v_i$ where the $A_i$ are
rank~$1$ summands of $A$, $0 \neq v_i \in A_i$ and $\tau_i = \{x \in
\Q \mid x v_i \in A_i\}$. The $\tau_i$ are {\dbf rational groups},
i.e. additive subgroups of $\Q$ containing $1$. We identify a rational
group $\tau$ with its type, and so speak of the rational group $\tau$
of type $\tau$.  The set $\{v_1, \ldots, v_r\}$ is called a {\dbf
  decomposition basis} of $A$.

The following \lemref{integral coordinates} is restricted to rigid
groups because this simplifies the notation considerably and is all
that is used later.
\smallskip

\begin{lem}\label{integral coordinates} Let $X$ be a rigid
  $\calB$--group and let $R := \R(X) = \bigoplus_{\rho \in \Tc(X)}
  R_\rho$. Then $X$ has a description $X = R + \frac{\Z}{e} \sum_{\rho
    \in \Tc(X)} \alpha_\rho v_\rho$ where 
\begin{enumerate} 
\item $\{v_\rho \mid \rho \in \Tc(R)\}$ is a decomposition basis of
  $R$, so $R = \bigoplus_{\rho \in \Tc(X)} \rho v_\rho$,
\item $\forall\, \tau \in \Tc(X) : \gcd^R(e,\sum_{\rho \neq \tau}
  \alpha_\rho v_\rho) = 1$, 
\item $e = [X : R]$,
\item $\alpha_\rho$ an integral factor of $e$, and
\item $\forall\, \tau \in \Tc(X): \mu_\tau(X) = \frac{e}{\alpha_\tau}$.
\end{enumerate} 

  If $\rk(X) = 2$, $R = \R(X) = \tau_1 v_1 \oplus \tau_2 v_2$ and $[X
  : R] = e$, then we may assume the representation $X = A + \Z
  e\invs(v_1 + v_2)$ with $\gcd^R(e,v_i) = 1$. 
\end{lem}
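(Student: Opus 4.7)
The plan is to construct the claimed normal form by successively normalizing the representation $X = R + \Z e\invs a$ through two moves that each preserve the group $X$: changes of the decomposition basis (replacing $v_\rho$ by $u_\rho v_\rho$ for $u_\rho \in \rho^\times$), and multiplication of $a$ by a unit $u \in (\Z/e\Z)^\times$ (which only rotates the cyclic subgroup $X/R$).

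First I would use the rigidity of $X$: this gives $R = \bigoplus_{\rho \in \Tc(X)} R_\rho$ with $\rk R_\rho = 1$, so each $R_\rho = \rho v_\rho$ for any chosen generator $v_\rho$, yielding (1); and $e = [X:R]$ is just (3). Since $X/R$ is cyclic of order $e$, there is $a \in R$ with $X = R + \Z e\invs a$ and $\gcd^R(e,a) = 1$. Writing $a = \sum_\rho a_\rho$ with $a_\rho \in R_\rho$ and setting $\alpha_\rho = \gcd^R(e, a_\rho)$, the conventions from the paper's preamble immediately give $\alpha_\rho \mid e$, establishing (4), and $\ord(a_\rho + eR) = e/\alpha_\rho$, which is $\mu_\rho(X)$ by \defref{defn: m}, establishing (5).

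The main technical step is to place $a$ in the form $a \equiv \sum_\rho \alpha_\rho v_\rho \pmod{eR}$. By the definition of $\alpha_\rho$ we have $a_\rho = \alpha_\rho s_\rho v_\rho$ for some $s_\rho \in \rho$, and the maximality in the gcd definition forces the image of $s_\rho$ in $\rho/(e/\alpha_\rho)\rho$ to be a unit. The goal is then to choose a global unit $u \in (\Z/e\Z)^\times$ together with units $u_\rho \in \rho^\times$ such that $u s_\rho \equiv u_\rho \pmod{(e/\alpha_\rho)\rho}$ for every $\rho$; once achieved, $ua \equiv \sum_\rho \alpha_\rho(u_\rho v_\rho) \pmod{eR}$, and replacing $v_\rho$ by $u_\rho v_\rho$ produces the desired normal form. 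The compatibility of these adjustments across all types is the main obstacle; I expect to handle it by a prime-by-prime CRT-style argument on the divisors of $e$, exploiting that in the rigid setting $\Tc(X)$ is an antichain (so, for rank at least two, each $\rho$ strictly contains $\Z$ and the unit group $\rho^\times$ surjects onto a large enough subgroup of $(\Z/(e/\alpha_\rho)\Z)^\times$ to absorb the discrepancy).

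Once the normal form is in place, condition (2) is exactly the Regulator Criterion \lemref{regulator criterion 1} applied to this representation, and it holds by our hypothesis $R = \R(X)$. For the rank-$2$ addendum, specialize to $\Tc(X) = \{\tau_1,\tau_2\}$: the general form becomes $X = R + \Z e\invs(\alpha_1 v_1 + \alpha_2 v_2)$, and in the normalized rank-$2$ situation where $\alpha_1 = \alpha_2 = 1$ this reduces to $X = R + \Z e\invs(v_1 + v_2)$, with the condition $\gcd^R(e, v_i) = 1$ merely restating $\mu_{\tau_i}(X) = e/\alpha_i = e$ via \defref{defn: m}.
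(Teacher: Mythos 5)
Your outline of (1), (3), (4), (5) is fine, but the step you yourself flag as ``the main technical step'' is where the argument breaks, and the repair is not a CRT refinement --- it is a different choice of what to normalize. You fix a decomposition basis $\{v_\rho\}$ (hence fixed rational groups $\rho$) in advance and then try to reach $a \equiv \sum_\rho \alpha_\rho v_\rho \pmod{eR}$ using only the moves $a \mapsto ua$ with $u \in (\Z/e\Z)^\times$ and $v_\rho \mapsto u_\rho v_\rho$ with $u_\rho \in \rho^\times$. The claim that $\rho^\times$ surjects onto a large enough subgroup of $(\Z/\beta_\rho\Z)^\times$ (where $\beta_\rho = e/\alpha_\rho$) is false in general: a rational group such as $\rho = \tfrac{1}{7}\Z$ strictly contains $\Z$, has a type incomparable to many others, and yet has $\rho^\times = \{\pm 1\}$. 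With two such types and $\beta_{\rho_1} = \beta_{\rho_2} = e = 5$, say $s_{\rho_1} \equiv 1$ and $s_{\rho_2} \equiv 2 \pmod{5\rho_i}$, you would need a single $u$ with $u \equiv \pm 1$ and $2u \equiv \pm 1 \pmod 5$, which is impossible. One global unit $u$ cannot reconcile independent discrepancies at several types when the local unit groups are trivial, so the proposed normalization cannot be carried out along this route.

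The correct move --- and the one the paper makes --- is to choose the decomposition basis \emph{from the element $a$ itself} rather than in advance. Writing $a = \sum_\rho u_\rho$ with $u_\rho \in R_\rho$ and $\alpha_\rho = \gcd^R(e,u_\rho)$, one factors $u_\rho = \alpha_\rho v_\rho$ inside the rank--one group $R_\rho$ and then \emph{defines} the rational group as $\rho = \{x \in \Q \mid x v_\rho \in R_\rho\}$, so that $R_\rho = \rho v_\rho$ holds automatically; the representing rational group is allowed to change with the generator, which is exactly the freedom your unit-only basis changes forbid. No compatibility across types ever arises, and conditions (2)--(5) then follow as you describe. A secondary point: in the rank--$2$ addendum you assume $\alpha_1 = \alpha_2 = 1$ as part of a ``normalized situation,'' but this is the content to be proved --- it is forced because for $|\Tc(X)| = 2$ the Regulator Criterion (condition (2)) reads $\gcd^R(e,\alpha_i v_i) = 1$, which is incompatible with $\alpha_i > 1$.
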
 

\begin{proof} Certainly, $X = R + \Z \frac{1}{e} \sum_{\rho \in
    \Tc(X)} u_\rho$, where $u_\rho \in R_\rho$ and (Regulator
  Criterion) for all $\tau \in \Tc(X)$, $\gcd^R(e,\sum_{\rho \neq
    \tau} u_\rho) = 1$. Let $\rho \in \Tc(X)$ and $\alpha_\rho =
  \gcd^R(e,u_\rho)$. Then
  \[\textstyle
  e = \alpha_\rho \beta_\rho, \quad u_\rho = \alpha_\rho v_\rho,
  \quad\text{and}\quad \gcd^R(\beta_\rho, v_\rho) = 1.
  \]
  Let $\rho = \{x \in \Q \mid x v_\rho \in R_\rho\}$. Then $\rho$ is a
  rational group of type $\rho$ and $R_\rho = \rho v_\rho$. 

  It remains to show that $\mu := \mu_\tau(X) = \ord(\alpha_\tau
  v_\tau + eR) = \frac{e}{\alpha_\tau}$. Clearly
  $\frac{e}{\alpha_\tau} (\alpha_\tau v_\tau + e R) = 0$, so $\mu \mid
  \frac{e}{\alpha_\tau}$. On the other hand $\mu = \ord(\alpha_\tau
  v_\tau + e R) = \ord(\alpha_\tau v_\tau + \alpha_\tau \beta_\tau R)
  = \ord(v_\tau + \beta_\tau R_\tau) = \ord(v_\tau + \beta_\tau \tau
  v_\tau)$. Hence $\mu v_\tau = \beta_\tau \frac{a}{b} v_\tau$ where
  $\frac{a}{b}$ is a reduced fraction in the rational group $\tau$. It
  follows that $b \mu = a \beta_\tau$ and hence $\beta_\tau = b
  b'$. Furthermore as $\frac{a}{b} v_\tau \in \tau v_\tau = R_\tau$ we
  also have $\frac{1}{b} v_\tau \in R_\tau$ and $v_\tau = b
  \left(\frac{1}{b} v_\tau \right)$. Thus $b$ is a common factor of
  $\beta_\tau$ and $v_\tau$. But $\gcd^R(\beta_\tau, v_\tau) = 1$, so
  $b=1$ and $\frac{e}{\alpha_\tau} = \beta_\tau \mid
  \mu_\tau$. Together with the previous observation that $\mu \mid
  \frac{e}{\alpha_\beta}$ we have the desired equality. 

  Consider $X = R + \Z e\invs(\alpha_1 v_1 + \alpha_2 v_2)$ where $R =
  \R(X) = \tau_1 v_1 \oplus \tau_2 v_2$ and $\alpha_i \mid e$. The
  Regulator Criterion requires that $\gcd^R(e, \alpha_i v_i) = 1$ this
  forces $\alpha_i = 1$. But then $\gcd^R(e, v_i) = 1$.
\end{proof}

Given a rigid $\calB$--group we can assume without loss of generality
that it is in the ``standard form'' described in \lemref{integral
  coordinates}. On the other hand, in constructing rigid
$\calB$--groups, in particular indecomposable $\calB$--groups, we must
make judicious choices in order to achieve the standard form and the
easy way of computing invariants.

Let $e$ be a positive integer. A type $\tau$ is {\dbf $e$--free} if
for all prime divisors $p$ of $e$, $p \tau \neq \tau$. It is easily
seen that a rank--$1$ group $A$ of $e$--free type $\tau$ has a
representation $A = \tau v$ such that $\gcd^A(e,v) = 1$. Consequently,
a rigid completely decomposable group $A$ all of whose critical types
are $e$--free has a description $A = \bigoplus_{\rho \in \Tc(A)} \rho
v_\rho$ with $\gcd^A(e,v_\rho) = 1$.   In this case $\{v_\rho :
  \rho \in \Tc(A)\}$ is called an {\dbf $e$--basis of $A$.}

It is easy to construct indecomposable $\calB$--groups with evident
invariants.

\begin{lem}\label{existence indecomposable} Let $e$ be a positive integer,
  and let $\tau_1, \ldots \tau_r$ be pairwise incomparable types such
  that $\forall\, i: \tau_i \text{ is $e$--free}$. Let $A =
  \bigoplus_{1 \le i \le r} \tau_i v_i$ such that $\forall\, i :
    \gcd^A(e,v_i)=1$. Then 
  \[\textstyle
  X = A + \frac{\Z}{e}(\alpha_1 v_1 + \cdots + \alpha_r v_r)
  \]
  with $\alpha_i$ integral factors of $e$ is an indecomposable
  $\calB$--group with $A = \R(X)$, $[X:A]=e$ and $\mu_{\tau_i}(X) =
  e/\alpha_i$ provided that the $\alpha_i$ are so chosen that
\begin{enumerate}
\item $\forall\, i : \gcd^A(e,\sum_{j \neq i} \alpha_j v_j) = 1$,
\item the frame of $X$ is connected. 
\end{enumerate}
\end{lem}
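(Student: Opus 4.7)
The plan is to verify, in order, the structural data $[X:A]=e$, $A=\R(X)$, the invariant formula, and indecomposability, each time reducing to a result already in the paper.

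First I would establish $[X:A]=e$, which by (5.2) amounts to showing $\gcd^A(e,a)=1$ for $a=\sum_i \alpha_i v_i$. Fix any prime $p\mid e$. Since each $\tau_i$ is $e$-free, $\tau_i/p\tau_i\cong\Z/p\Z$, so $A/pA$ is free over $\Z/p\Z$ on the images of the $v_i$'s. Pick any index $i$; hypothesis (1) gives $\gcd^A(e,\sum_{j\neq i}\alpha_j v_j)=1$, so in $A/pA$ the element $\sum_{j\neq i}\alpha_j v_j$ is nonzero, forcing some $\alpha_j$ with $j\neq i$ to be coprime to $p$. Consequently $a\not\equiv 0\pmod{pA}$, and this for every prime $p\mid e$ yields $\gcd^A(e,a)=1$, as required.

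Next, for $A=\R(X)$, the Regulator Criterion (\lemref{regulator criterion 1}) says exactly that we need $\gcd^A(e,\sum_{\rho\neq\tau}\alpha_\rho v_\rho)=1$ for all $\tau\in\Tc(A)$, which is hypothesis~(1). Combined with the antichain assumption on the $\tau_i$ this shows $X\in\calB$ and that $X$ is rigid, since each $R_{\tau_i}=\tau_i v_i$ has rank~$1$. The invariant computation $\mu_{\tau_i}(X)=e/\alpha_i$ is not new: the construction puts $X$ directly in the ``standard form'' of \lemref{integral coordinates}, and the formula $\mu_\tau(X)=e/\alpha_\tau$ derived there applies verbatim.

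Finally, indecomposability follows from \propref{indecomposable via frame}, which asserts that a $\calB$-group is indecomposable if and only if it is rigid and its frame is connected. Rigidity was just established, and connectedness of the frame is hypothesis~(2). The main obstacle is really only Step~1, the verification that hypothesis~(1) — a statement about each partial sum omitting one term — forces $\gcd^A(e,a)=1$ for the full sum; the argument via $e$-freeness of the $\tau_i$ handles this cleanly prime by prime, and everything else is a direct citation of results already in Section~\ref{BRCRQ}.
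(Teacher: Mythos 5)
Your proof is correct and follows the same route as the paper, whose entire proof is the single line ``Evident by \lemref{regulator criterion 1} and \propref{indecomposable via frame}.'' You have simply supplied the details the authors leave implicit --- most usefully the prime-by-prime verification that hypothesis (1) together with $e$--freeness forces $\gcd^A(e,\sum_i\alpha_i v_i)=1$, which is what makes $[X:A]=e$ and legitimizes the appeal to the Regulator Criterion.
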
 

\begin{proof} Evident by \lemref{regulator criterion 1} and
  \propref{indecomposable via frame}. 
\end{proof} 

For example, for $e = 3^2 \cdot 5 \cdot 7^3$ and $\tau_i$ $e$--free,
the group 
  \[\textstyle
  X = (\tau_1 v_1 + \tau_2 v_2 + \tau_3 v_3 + \tau_4 v_4 +
  \tau_5 v_5) + \frac{\Z}{e}(3^2 v_1 + 7 v_2 + 3 v_3 + 5 v_4 + v_5) 
  \] 
is an indecomposable $\calB$--group with invariants 
$\mu_1(X) = 5 \cdot 7^3, \mu_2(X) = 3^2 \cdot 5 \cdot 7^2,
\mu_3(X) = 3 \cdot 5 \cdot 7^3, \mu_4(X) = 3^2 \cdot 7^3,
\mu_5(X) = 3^2 \cdot 5 \cdot 7^3$.  

The indecomposable decompositions of rigid $\calB$--groups are
unique. Hence different indecomposable decompositions can only occur
in properly block--rigid groups. 

\begin{thm}\label{rigid unique} The endomorphism ring of a rigid
  $\calB$--group is commutative. Consequently, rigid $\calB$--groups
  have a unique indecomposable decomposition.
\end{thm}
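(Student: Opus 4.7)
The plan is to embed $\End(X)$ into a visibly commutative ring, namely $\End(R)$ where $R = \R(X)$. Two facts drive the argument: the regulator $R$ is fully invariant in $X$, and the hypothesis that $X$ is rigid forces $R$ to be a direct sum of rank--$1$ groups of pairwise incomparable types.

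First I would show that the restriction map $\rho : \End(X) \to \End(R)$, $\phi \mapsto \phi|_R$, is a well--defined ring homomorphism. Well--definedness uses that $R$ is fully invariant. For injectivity, recall that $X/R$ is finite, so $X \subseteq \Q R$ and $\Q X = \Q R$; any $\phi \in \End(X)$ extends uniquely to a $\Q$--linear map on $\Q X$, and if $\phi|_R = 0$ then this extension is zero, hence $\phi = 0$. Thus $\End(X)$ embeds as a subring of $\End(R)$.

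Next I would compute $\End(R)$. Write $R = \bigoplus_{\rho \in \Tc(X)} R_\rho$ with each $R_\rho$ of rank~$1$ and type~$\rho$, and the types pairwise incomparable by rigidity. For rank--$1$ groups with incomparable types one has $\Hom(R_\rho, R_\sigma) = 0$ whenever $\rho \ne \sigma$, so
\[
\End(R) \;=\; \prod_{\rho \in \Tc(X)} \End(R_\rho).
\]
Each factor $\End(R_\rho)$ is a subring of $\Q$, hence commutative, so $\End(R)$ is commutative, and therefore so is its subring $\End(X)$. This proves the first assertion.

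For the ``consequently'' clause, an indecomposable decomposition $X = A_1 \oplus \cdots \oplus A_t$ corresponds to a decomposition $1 = \pi_1 + \cdots + \pi_t$ of the identity in $\End(X)$ into orthogonal primitive idempotents (primitive because each $A_i$ is indecomposable). In a commutative ring, such a decomposition of $1$ is unique: given any second decomposition $1 = \pi_1' + \cdots + \pi_s'$, the identities $\pi_i = \sum_j \pi_i \pi_j'$ and the primitivity of $\pi_i$ force each product $\pi_i \pi_j'$ to be either $\pi_i$ or $0$, yielding a bijection between the two families with $\pi_i = \pi_{\sigma(i)}'$ for a permutation $\sigma$. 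Hence the indecomposable summands $A_i = \pi_i(X)$ are uniquely determined as subgroups of $X$. The main obstacle, and the only substantive step, is the embedding of $\End(X)$ into $\End(R)$; everything else is formal manipulation of commutative idempotents and rank--$1$ endomorphism rings.
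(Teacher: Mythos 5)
Your proof is correct and takes essentially the same approach as the paper for the substantive step: embed $\End(X)$ into $\End(\R(X))$ by restriction (injective because $R$ has finite index, hence is full, in $X$) and observe that rigidity forces $\End(R)$ to be a product of subrings of $\Q$, hence commutative. For the ``consequently'' clause the paper argues in a slightly different but equivalent way --- commutativity makes every endomorphic image, in particular every direct summand, fully invariant, and two indecomposable decompositions are then matched by intersecting summands --- whereas you use the uniqueness of a decomposition of $1$ into orthogonal primitive idempotents in a commutative ring; both routes are standard and complete.
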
 

\begin{proof} Let $X$ be a rigid $\calB$--group. The restriction map
  imbeds $\End(X)$ in $\End(\R(X))$ and we will see shortly that
  $\End(\R(X))$ is commutative. Let $R := \R(X) = \bigoplus_{\rho
    \in \Tc(X)} R(\rho)$ where $\rk(R(\rho)) = 1$ and let $\varphi \in
  \End(R)$. Then $\varphi$ restricts to an endomorphism $\varphi_\rho$
  of $R(\rho)$ and hence is multiplication by a rational number. Hence
  $\varphi = (\ldots, \varphi_\rho, \ldots)$ and such maps commute
  with one another.

  In a group with commutative endomorphism ring every endomorphic
  image is a fully invariant subgroup. In particular, any direct
  summand is fully invariant. Let $X = \bigoplus_i X_i = \bigoplus_j
  Y_j$ be indecomposable decompositions.
  Let $\tau \in \Tc(X_1)$. Then there is $j$ such that $\tau \in
  \Tc(Y_j)$. Without loss of generality $j=1$ and for convenience
  write $X = Y_1 \oplus Y_1'$. As $X_1$ is fully invariant, $X_1 = (X_1
  \cap Y_1) \oplus (X_1 \cap Y_1')$.  As $X_1$ is indecomposable and
  $0 \neq X(\tau) = R(\tau) \subset X_1 \cap Y_1$ it follows that $X_1
  \subset Y_1$ and $X_1 \cap Y_1' = 0$. Then $X_1$ is a summand of
  $Y_1$ and since $Y_1$ is indecomposable, so $X_1 = Y_1$. The claim
  follows by induction on the number of summands $X_i$.
\end{proof} 

\begin{cor}\label{clipped so unique decompositions} If $X$ is a
  clipped $\calB$--group, then $X$ has a unique indecomposable
  decomposition. 
\end{cor}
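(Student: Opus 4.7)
The plan is to reduce this to \thmref{rigid unique}, which already delivers uniqueness of the indecomposable decomposition for rigid $\calB$--groups. So the only thing to check is that a clipped $\calB$--group is automatically rigid; no further work with endomorphism rings or invariants should be needed.

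First I would invoke \propref{clipped iff}: a block--rigid crq--group $X$ is clipped if and only if $X$ is rigid and $\mu_\tau(X) > 1$ for every $\tau \in \Tc(X)$. From the hypothesis that $X$ is clipped, this immediately gives that $X$ is rigid (the condition on the $\mu_\tau$ is not needed for what follows, but is a free bonus). Then I would apply \thmref{rigid unique}, which states that rigid $\calB$--groups have a unique indecomposable decomposition, to conclude.

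There is essentially no obstacle here: the corollary is a two-line consequence of the two results stated immediately before it. The only thing worth emphasising is why we need clipped and not just $\calB$--group, namely that without ruling out rank--one completely decomposable summands, nontrivial rearrangements of critical types across blocks could in principle occur, whereas \propref{clipped iff} forbids any completely decomposable summand and therefore forces rigidity. So the written proof will be a single short paragraph citing \propref{clipped iff} and \thmref{rigid unique}.
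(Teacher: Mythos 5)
Your proof is correct and follows exactly the paper's own argument: apply \propref{clipped iff} to deduce that a clipped $\calB$--group is rigid, then conclude by \thmref{rigid unique}. Nothing further is needed.
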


\begin{proof} Let $X$ be a clipped $\calB$--group. By
  \propref{clipped iff} $X$ is rigid and by \thmref{rigid unique} $X$
  has a unique indecomposable decomposition. 
\end{proof}

\begin{comment}
We are now in a position to provide examples of groups $X$ with $r^X +
t^X < \rk(X) + 1$.

\begin{ex}\label{r + t < n + 1} \rm
  Let $X$ be a clipped rigid group and $X = X_1 \oplus \cdots \oplus
  X_t$ with $\rk(X_1) \geq \cdots \geq \rk(X_t)$ its indecomposable
  decomposition. Then $r^X + t^X = \rk(X_1) + t = \rk(X) - (\rk(X_2) +
  \cdots + \rk(X_t)) + t \leq \rk(X) - 2(t-1) + t = \rk(X) - (t-2)$,
  hence $r^X + t^X < \rk(X) + 1$ for $t \geq 2$. If $\forall\, i \in
  \{1, \ldots, t\}: \rk(X_i) = 2$, then $r^X + t^X = \rk(X) - (t-2)$.
\end{ex} 
\end{comment}

\section{Tools and Obstructions}

We start with a consequence of realization.

\begin{lem}\label{bound for ranks} Let $X \in \calB$. Let
  $R = \R(X) = \bigoplus_{\rho \in \Tc(X)} R_\rho$. Suppose that $X =
  X_1 \oplus \cdots \oplus X_s$ is an indecomposable decomposition of
  $X$. Then $\forall\, \tau \in \Tc(X) : \rk(R_\tau) \leq s$. In
  particular, if $X$ realizes $(r_1,r_2)$, then $\forall\, \tau \in
  \Tc(X) : \rk(R_\tau) \leq 2$.
\end{lem}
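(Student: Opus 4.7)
The plan is to chain together the Product Formula (\thmref{product formula}) with the characterization of indecomposable $\calB$--groups via the frame (\propref{indecomposable via frame}). The key observation is that indecomposability of each summand $X_i$ forces each $X_i$ to be rigid, which means every homogeneous component of its regulator has rank exactly $1$.

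First, I would apply \thmref{product formula} to the given decomposition $X = X_1 \oplus \cdots \oplus X_s$. This yields that each $X_i \in \calB$ with regulator $R_i = X_i \cap R$, and moreover $R = R_1 \oplus \cdots \oplus R_s$. Since $R$ and each $R_i$ are completely decomposable and each $R_i$ is fully invariant in $R$ (being the regulator of the summand), the $\tau$--homogeneous components split accordingly:
\[
R_\tau = \bigoplus_{i=1}^{s} (R_i)_\tau.
\]

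Next, I invoke \propref{indecomposable via frame}: since each $X_i$ is an indecomposable $\calB$--group, it is rigid. By the definition of rigidity, $\Tc(X_i)$ is an antichain and $\rk\bigl((R_i)_\rho\bigr) = 1$ for every $\rho \in \Tc(X_i)$, while $(R_i)_\rho = 0$ for $\rho \notin \Tc(X_i)$. Hence $\rk\bigl((R_i)_\tau\bigr) \leq 1$ for every type $\tau$ and every $i$. Summing,
\[
\rk(R_\tau) \;=\; \sum_{i=1}^{s} \rk\bigl((R_i)_\tau\bigr) \;\leq\; s,
\]
which is the desired bound. The final statement about $X$ realizing $(r_1,r_2)$ follows by taking $s = 2$.

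I do not anticipate a serious obstacle here: the only subtlety is verifying that the homogeneous components of $R$ really are the direct sums of the homogeneous components of the $R_i$, but this is immediate once one notes that $R_i$, being a direct summand of the completely decomposable group $R$, is itself completely decomposable and its homogeneous components agree with its intersections with the homogeneous components of $R$.
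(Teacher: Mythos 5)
Your proof is correct and follows essentially the same route as the paper: the key point in both is that indecomposable $\calB$--groups are rigid (\propref{indecomposable via frame}), so each summand contributes at most rank $1$ to each $\tau$--homogeneous component of the regulator. The paper phrases the splitting via the functorial subgroups $X(\tau)$ and the regulator criterion rather than via the Product Formula, but the content is the same.
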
 

\begin{proof} Indecomposable $\calB$--groups are rigid. Let $\tau \in
  \Tc(X)$. Then $R_\tau = X(\tau) = X_1(\tau) \oplus \cdots \oplus
  X_s(\tau)$ with $\rk X_i(\tau) \leq 1$ (\ref{eq: regulator
    criterion 0}). Hence $\rk R_\tau = \rk X(\tau) \leq s$.
\end{proof} 

We use the following convenient shorthand that displays essential
features of indecomposable $\calB$--groups but not the whole story.

\medskip {\bf Notation. } The expression $[\tau_1\, \tau_2\, \ldots \, \tau_r]$
denotes an indecomposable $\calB$--group $X$ with critical types
$\tau_1, \ldots, \tau_r$ and rank $r$. It is tacitly assumed that $X$
is given in the form $X = R + \frac{\Z}{e} a$ where $R = \R(X) =
\bigoplus_{\rho \in \Tc(X)} \rho v_\rho$, $a \in R$, and $[X:R] = e$.

On occasion the regulator index $e$ is relevant to the discussion and
we encode the groups $X = R + \frac{\Z}{e} a$ by $[\tau_1\,\tau_2\,
\ldots\, \tau_r; e]$. In a direct sum $[\tau_1\,\tau_2\, \ldots; e_1]
\oplus [\sigma_1, \sigma_2, \ldots; e_2]$ it is tacitly assumed that   the poset of types 
$\{\tau_i, \sigma_j\}$ is an antichain, and the regulator indices
$e_1, e_2$ are relatively prime so that the direct sum is a group in
$\calB$ {\rm (\lemref{direct sums in B})}.
Of course, when the invariants $\mu_\tau(X)$ are needed, the explicit
representations \eqref{eq: representation BRCRQ} must be used.

\medskip If a group realizes particular partitions, then there are consequences
for the critical typeset of the group.

\begin{lem}\label{pinning down critical types} Suppose that $X$
  realizes $(k,1^{n-k})$ and $(r_1,r_2)$ where $r_1,r_2 \geq 2$.
  Then $\Tc(X) = \{\tau_1, \ldots, \tau_k\}$ and $X = [\tau_1 \,
  \cdots \, \tau_k] \oplus [\sigma_1] \oplus \cdots [\sigma_{n-k}]$
  where $\{\sigma_1, \ldots, \sigma_{n-k}\} \subseteq \{\tau_1,
  \ldots, \tau_k\}$.
\end{lem}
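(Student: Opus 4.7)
The plan is to extract the claimed decomposition directly from the $(k, 1^{n-k})$ realization, and then to use the $(r_1, r_2)$ realization to rule out any types on the rank-$1$ summands that do not already appear in the rank-$k$ summand. Write $X = X_1 \oplus Y_1 \oplus \cdots \oplus Y_{n-k}$ with $X_1$ indecomposable of rank $k$ and each $Y_j$ of rank~$1$. Indecomposable $\calB$-groups are rigid (as noted before \lemref{bound for ranks}), so $X_1$ has exactly $k$ distinct critical types $\tau_1,\ldots,\tau_k$; in the shorthand, $X_1 = [\tau_1\,\cdots\,\tau_k]$. Let $\sigma_j$ denote the type of $Y_j$. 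The asserted decomposition is then already visible, and since $\Tc(X) = \{\tau_1,\ldots,\tau_k\} \cup \{\sigma_1,\ldots,\sigma_{n-k}\}$, everything reduces to showing $\sigma_j \in \{\tau_1, \ldots, \tau_k\}$ for each $j$.

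For this I would invoke the Product Formula \thmref{product formula} (iterated across the summands) to obtain $\R(X) = \R(X_1) \oplus Y_1 \oplus \cdots \oplus Y_{n-k}$, and combine it with the Regulator Criterion \eqref{eq: regulator criterion 0}, which identifies each homogeneous component $R_\sigma$ of $\R(X)$ with $X(\sigma)$ and therefore exhibits it as fully invariant in $X$. Suppose for contradiction that some $\sigma_j$ fails to lie in $\{\tau_1, \ldots, \tau_k\}$. Since $X_1$ is rigid with critical types among $\{\tau_i\}$, it contributes nothing to $R_{\sigma_j}$, so $R_{\sigma_j} = \bigoplus_{j' \,:\, \sigma_{j'} = \sigma_j} Y_{j'}$ is a nonzero completely decomposable $\sigma_j$-homogeneous direct summand of $X$ (a summand because each $Y_{j'}$ is). Applying \lemref{completely decomposable tau homogeneous fully invariant summand} to this summand, every indecomposable decomposition of $X$ must contain a rank-$1$ summand of type $\sigma_j$; but the hypothesis $r_1, r_2 \geq 2$ forbids any rank-$1$ summand in the $(r_1, r_2)$ decomposition, a contradiction.

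The main obstacle, I expect, is recognizing that $R_{\sigma_j}$ is a direct summand of $X$ itself (not merely of $\R(X)$), since this is precisely what allows \lemref{completely decomposable tau homogeneous fully invariant summand} to fire. This hinges on the observation that when $\sigma_j$ is a ``new'' type, nothing from the rigid piece $X_1$ appears in $R_{\sigma_j}$, so $R_{\sigma_j}$ coincides with the evident direct sum of those $Y_{j'}$ of matching type — and it is at exactly this point that the assumption $r_1, r_2 \geq 2$ is used.
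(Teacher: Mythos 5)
Your argument is correct and follows the paper's own proof in all essentials: extract the decomposition $[\tau_1\cdots\tau_k]\oplus[\sigma_1]\oplus\cdots\oplus[\sigma_{n-k}]$ from the $(k,1^{n-k})$ realization, observe that a ``new'' type $\sigma_j$ would make the sum of the rank-one summands of that type a completely decomposable $\sigma_j$-homogeneous fully invariant direct summand, and then invoke \lemref{completely decomposable tau homogeneous fully invariant summand} to force a rank-one summand in the $(r_1,r_2)$ decomposition, a contradiction. Your justification of full invariance via $R_{\sigma_j}=X(\sigma_j)$ from the Regulator Criterion is exactly the detail the paper leaves implicit.
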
 

\begin{proof} Let $\R(X) = \bigoplus_{\rho \in \Tc(X)} R_\rho$. As $X$
  realizes $(r_1,r_2)$, by \lemref{bound for ranks}, $\forall\, \rho
  \in \Tc(X): \rk(R_\rho) \leq 2$. As $r_1,r_2 \geq 2$ the group $X$
  has no fully invariant summand of rank $1$ (\lemref{completely
    decomposable tau homogeneous fully invariant summand}). As $X$ realizes
  $(k,1^{n-k})$, we have $X = [\tau_1 \, \cdots \, \tau_k] \oplus
  [\sigma_1] \oplus \cdots \oplus [\sigma_{n-k}]$. If $\sigma_i \notin
  \{\tau_1, \ldots, \tau_k\}$, then $[\sigma_i] \oplus \cdots$ is a
  fully invariant direct summand of $X$ and hence any indecomposable
  decomposition of $X$ contains a summand $[\sigma_i]$
  (\lemref{completely decomposable tau homogeneous fully invariant
    summand}), contradicting our earlier observation.
\end{proof} 

We now consider Corner's groups although as a consequence of
\lemref{bound for ranks} Corner's groups are of no help in trying to
realize $\calS(n,n-k+1)$. In particular, we show that Corner groups
are block--rigid crq--groups. \thmref{decompositions of Corner
  groups}(2) is not new (\cite{Cor}, \cite[Exercise~3, p.139]{F2}) but
it is easily derived from our results.

\begin{defn}\label{Corner groups} \rm Given
\begin{enumerate} 
\item integers $k,n$ with $1 \leq k \leq n$,
\item pairwise relatively prime integers $>1$, $q_1, \ldots, q_{n-k}$,
\item $e$--free pairwise incomparable types $\tau, \tau_1, \ldots,
  \tau_{n-k}$, 
\end{enumerate}
a {\dbf Corner group} $X$ has base group 
  \[\textstyle
  R = \tau u_1 \oplus \cdots \oplus \tau u_k \oplus \tau_1 v_1 \oplus
  \cdots \oplus \tau_{n-k} v_{n-k},
  \]
and is defined to be 
  \[\textstyle
  X = R + \frac{\Z}{q_1}(u_1+v_1) + \frac{\Z}{q_2}(u_1+v_2) + \cdots +
  \frac{\Z}{q_{n-k}}(u_1+v_{n-k}).
  \] 
  Hence $\rk(R_\tau) = k$ and for all $ i\in[n-k],\  \rk(R_{\tau_i}) = 1$.
\end{defn}

\begin{thm}\label{decompositions of Corner groups} Let $X$ be a
  Corner group as in \defref{Corner groups}. Then the following hold.
\begin{enumerate}
\item Setting $e := q_1 q_2 \cdots q_{n-k}$, and $e_i := e/q_i$, $R_0
  := \tau u_2 \oplus \cdots \oplus \tau u_k$ and $R_1 := \tau u_1 \oplus
    \tau_1 v_1 \oplus \cdots \oplus \tau_{n-k} v_{n-k}$, 
\begin{equation*}%\label{}
  X = R_0 \oplus \left(R_1 +
    \frac{\Z}{e}((e_1 + \cdots + e_{n-k}) u_1 + e_2 v_2 + \cdots +
    e_{n-k} v_{n-k})\right) \in \calB
\end{equation*}
  realizing $(n-k+1,1^{k-1}) \in \calP(n)$.
\item $\calP(X) = \calC(n,k)$.
\end{enumerate} 
\end{thm}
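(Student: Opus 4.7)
The plan is to prove Part~(1) by a CRT rewrite of the generators of $X$ over $R$, and Part~(2) by two separate inclusions --- one using the Near--Classification Theorem to match invariants, the other ruling out ``bad'' indecomposable summands via frame arguments.

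For Part~(1), the first step is to replace the sum $\sum_i \frac{\Z}{q_i}(u_1+v_i)$ by the single cyclic subgroup $\Z a_0$, where $a_0 := \frac{1}{e}\bigl((\sum_i e_i)u_1 + \sum_i e_i v_i\bigr)$. Pairwise coprimality of the $q_i$ forces $a_0$ to generate the same subgroup of $\Q R/R$ as the original fractional generators, so $X = R + \Z a_0$. The key arithmetic identity is $\gcd(e,\sum_i e_i) = 1$: for each prime $p\mid e$ there is a unique $l_0$ with $p\mid q_{l_0}$, and then $p\mid e_l$ exactly when $l\neq l_0$, so $\sum_i e_i \equiv e_{l_0} \not\equiv 0 \pmod p$. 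Because $a_0\in\Q R_1$, the summand $R_0 = \tau u_2 \oplus\cdots\oplus \tau u_k$ splits off directly, giving $X = R_0\oplus Y$ with $Y := R_1 + \Z a_0$. I then verify $Y\in\calB$ with $\R(Y)=R_1$ and $[Y:R_1]=e$ via \lemref{regulator criterion 1}, and \lemref{integral coordinates}(5) yields $\mu_\tau(Y)=e$ and $\mu_{\tau_i}(Y)=q_i$. Since $\gcd(e,q_i)=q_i>1$, the frame of $Y$ is a star centered at $\tau$, hence connected, so $Y$ is indecomposable by \propref{indecomposable via frame}, and $X = R_0\oplus Y$ realizes $(n-k+1,1^{k-1})$.

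For the inclusion $\calC(n,k)\subseteq\calP(X)$ in Part~(2), given any partition $(s_1,\ldots,s_k)$ of $n$ into $k$ positive parts, I partition $\{1,\ldots,n-k\}$ into blocks $I_1,\ldots,I_k$ with $|I_j|=s_j-1$, and set $Q_j:=\prod_{i\in I_j} q_i$ (pairwise coprime with $\prod_j Q_j = e$). For each $j$ with $s_j\geq 2$ I apply \lemref{existence indecomposable} to produce an indecomposable $\calB$-group $Y_j$ of rank $s_j$ with regulator $\tau w_j\oplus\bigoplus_{i\in I_j}\tau_i v_i'$, regulator index $Q_j$, and invariants $\mu_\tau(Y_j)=Q_j$, $\mu_{\tau_i}(Y_j)=q_i$ (for $s_j=1$ just take $Y_j=\tau w_j$); the Regulator Criterion and the star-shaped connected frame are checked exactly as in Part~(1). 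Pairwise coprimality of the $Q_j$ permits \lemref{direct sums in B} to form $\bigoplus_j Y_j\in\calB$ with regulator $\cong\R(X)$, and the Product Formula \thmref{product formula} shows the $\mu$-invariants agree with those of $X$. Hence \thmref{near iso criterion} gives $X\nriso\bigoplus_j Y_j$; since near-isomorphism preserves $\calP$, the partition $(s_1,\ldots,s_k)$ lies in $\calP(X)$.

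For the reverse inclusion $\calP(X)\subseteq\calC(n,k)$, let $X=\bigoplus_j Y_j$ be any indecomposable decomposition. Rigidity forces $\rk Y_j(\tau)\leq 1$, so since $\rk R_\tau=k$ at most $k$ summands have $\tau$-rank one. The crucial step is to rule out summands $Y$ with $\tau\notin\Tc(Y)$: such a $Y$ has $\Tc(Y)\subseteq\{\tau_1,\ldots,\tau_{n-k}\}$, and the Product Formula forces $\mu_{\tau_i}(Y)=q_i$ for each $\tau_i\in\Tc(Y)$; if $|\Tc(Y)|=1$ then $Y$ is rank one of type $\tau_i$ and the Product Formula gives $\mu_{\tau_i}(X)=1\neq q_i$, while if $|\Tc(Y)|\geq 2$ the frame of $Y$ has no edges (since $\gcd(q_i,q_j)=1$ for $i\neq j$) yet must be connected, both contradictions. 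Hence every summand has $\tau$-rank one and the number of summands is exactly $k$, giving $\calP(X)\subseteq\calC(n,k)$. The main obstacle I anticipate is producing the $Y_j$ cleanly in the forward inclusion --- verifying the Regulator Criterion and the connected frame for each $j$ --- but once those checks are in place the rest of the argument reduces to a tidy invariant match via Near-Classification.
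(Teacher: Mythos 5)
Your proposal is correct, and while your part~(1) matches the paper, your part~(2) takes a genuinely different route. For part~(1) both you and the paper rewrite $\sum_i\frac{\Z}{q_i}(u_1+v_i)$ over the common denominator $e$ and recover the individual fractional generators from the single one (the paper invokes \lemref{get summands back} for this); your additional verifications of the Regulator Criterion, the invariants $\mu_\tau=e$, $\mu_{\tau_i}=q_i$, and the star--shaped connected frame are left implicit in the paper but are exactly right. (Incidentally, your generator correctly carries the term $e_1v_1$, which has dropped out of the displayed formula in the theorem statement.) For part~(2) the paper argues in two lines: $X\in\calB$, so $\calP(X)$ is hooked by \propref{P(G) hooked}(1) (which rests on Blagoveshchenskaya's \thmref{Blago2}), and since $X$ realizes all of $\calC(n,k)$ by Corner's classical theorem, \propref{unique hook}(3) forces $\calP(X)=\calC(n,k)$. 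You instead prove both inclusions from scratch inside the $\calB$--machinery: the inclusion $\calC(n,k)\subseteq\calP(X)$ by building, for each partition, a near--isomorphic direct sum with matching regulator and $\mu$--invariants and appealing to \thmref{near iso criterion} (the same invariant--matching device the paper uses in \exref{S53} and \exref{S64}), and the reverse inclusion by showing that every indecomposable summand of $X$ must contain $\tau$ in its critical typeset --- a rank--one summand of type $\tau_i$ would force $\mu_{\tau_i}(X)=1\neq q_i$ by the Product Formula, and a higher--rank summand avoiding $\tau$ would have an edgeless, hence disconnected, frame --- so every indecomposable decomposition has exactly $\rk R_\tau=k$ summands. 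Your version is longer but more self--contained: it does not lean on \thmref{Blago2} or on the external citation of Corner's theorem, and your direct count of summands delivers $\calP(X)\subseteq\calC(n,k)$ without passing through the extremal quantities $r^X$, $t^X$ and the maximality argument of \propref{unique hook}.
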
 

\begin{proof} (1) \allowbreak $\frac{1}{q_1}(u_1+v_1) +
  \frac{1}{q_2}(u_1+v_2) + \cdots + \frac{1}{q_{n-k}}(u_1+v_{n-k}) =
  \frac{1}{e}((e_1 + \cdots + e_{n-k}) u_1 + e_2 v_2 + \cdots +
  e_{n-k} v_{n-k})$. Hence $X$ contains $(\tau u_2 \oplus
  \cdots \oplus \tau u_k) \oplus$ \newline $\left((\tau u_1 \oplus
    \tau_1 v_1 \oplus \cdots \oplus \tau_{n-k} v_{n-k}) +
    \frac{\Z}{e}((e_1 + \cdots + e_{n-k}) u_1 + e_2 v_2 + \cdots +
    e_{n-k} v_{n-k})\right)$. Equality follows from \lemref{get
    summands back}.
 
  (2) By (1) a Corner group $X$ is a $\calB$--group hence $\calP(X)$
  is hooked. The claim follows from \propref{unique hook}(3).
   % {\color{red} I don't see why}
\end{proof} 

 \smallskip

\lemref{sums of rigid crq groups} is basic. In its proof \lemref{get
  summands back} will be used.

\begin{lem}\label{get summands back} Let $R$ be a torsion-free group,
  $a,b \in R$, $e_1, e_2$ relatively prime integers, and $1 = u_1
  e_1 + u_2 e_2$. Then, in $\Q R$,   
\begin{enumerate} 
\item $\frac{1}{e_1} a + \frac{1}{e_2} b = 
  \frac{1}{e_1 e_2}(e_2 a + e_1 b)$, but also
\item $u_1 e_1 \left(\frac{1}{e_1 e_2}(e_2 a + e_1 b)\right) =
  \frac{1}{e_2}b + u_1 a - u_2 b$, and 
\item $u_2 e_2 \left(\frac{1}{e_1 e_2}(e_2 a + e_1 b)\right) =
  \frac{1}{e_1}a - u_1 a + u_2 b$
\end{enumerate}
  Hence $R + \frac{1}{e_1 e_2}(e_2 a + e_1 b) = 
  R + \frac{1}{e_1} a + \frac{1}{e_2} b$. 
\end{lem}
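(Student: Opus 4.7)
The plan is to verify the three displayed identities by direct computation in $\Q R$ and then deduce the equality of the two subgroups by mutual inclusion.

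For part (1), I would just put the two fractions over the common denominator $e_1 e_2$. For parts (2) and (3), the key substitution is to use the B\'ezout relation $1 = u_1 e_1 + u_2 e_2$ to rewrite $u_1 e_1$ as $1 - u_2 e_2$ (respectively $u_2 e_2$ as $1 - u_1 e_1$). Concretely, in (2) I expand
\[
u_1 e_1 \cdot \tfrac{1}{e_1 e_2}(e_2 a + e_1 b) = u_1 a + \tfrac{u_1 e_1}{e_2}b,
\]
then replace $\tfrac{u_1 e_1}{e_2} = \tfrac{1 - u_2 e_2}{e_2} = \tfrac{1}{e_2} - u_2$. Part (3) proceeds symmetrically, pulling out $u_2 b$ and replacing $\tfrac{u_2 e_2}{e_1}$ by $\tfrac{1}{e_1} - u_1$.

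For the final equality of subgroups of $\Q R$, I would argue both inclusions. The inclusion $R + \tfrac{1}{e_1 e_2}(e_2 a + e_1 b) \subseteq R + \tfrac{1}{e_1}a + \tfrac{1}{e_2}b$ follows at once from (1). For the reverse inclusion, observe that the generator $c := \tfrac{1}{e_1 e_2}(e_2 a + e_1 b)$ lies in the left-hand group, and the integer multiples $u_1 e_1 c$ and $u_2 e_2 c$ therefore also lie there. Identities (2) and (3) rearrange to
\[
\tfrac{1}{e_2}b = u_1 e_1 c - u_1 a + u_2 b, \qquad \tfrac{1}{e_1}a = u_2 e_2 c + u_1 a - u_2 b,
\]
with $u_1 a, u_2 b \in R$; this shows $\tfrac{1}{e_1}a$ and $\tfrac{1}{e_2}b$ both belong to $R + \Z c$, giving the reverse inclusion.

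There is no real obstacle here; the one place to be careful is that ``$R + \tfrac{1}{e_1 e_2}(e_2 a + e_1 b)$'' denotes $R + \Z\cdot\tfrac{1}{e_1 e_2}(e_2 a + e_1 b)$, so when recovering $\tfrac{1}{e_1}a$ and $\tfrac{1}{e_2}b$ one must produce them using integer coefficients times the single generator $c$ together with elements of $R$. This is exactly what identities (2) and (3) are engineered to provide, which is why the statement packages the computation in this particular form rather than simply asserting the subgroup equality.
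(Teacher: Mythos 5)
Your computations are correct and this is exactly the ``easy computation'' the paper leaves to the reader: parts (1)--(3) are verified by the B\'ezout substitutions you describe, and the two inclusions follow as you state. Nothing further is needed.
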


\begin{proof} Easy computation.
\end{proof} 

\begin{cor}\label{direct sum of two groups} For $i = 1,2$, let $X_i =
  R_i + \frac{\Z}{e_i} a_i \in \calB$ where $\Tc(X_1) \cup \Tc(X_2)$
  is an antichain, $e_1, e_2$ are relatively prime integers, $R_i =
  \R(X_i)$, and $a_i \in R_i$. Then $X_1 \oplus X_2 \in
  \calB$,
  \[\textstyle
  X_1 \oplus X_2 = (R_1 \oplus R_2) 
         + \frac{\Z}{e_1 e_2}(e_2 a_1 + e_1 a_2) \text{ and } 
  \R(X_1 \oplus X_2) = R_1 \oplus R_2.
  \]  
\end{cor}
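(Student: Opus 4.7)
The plan is to assemble this from the two immediately preceding results: \lemref{direct sums in B} handles membership in $\calB$ and identification of the regulator, while \lemref{get summands back} handles the rewriting of the generator as a single fraction.

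First I would verify the hypotheses of \lemref{direct sums in B}. Since $[X_i : R_i] = e_i$ and $\gcd(e_1,e_2) = 1$, the coprimality condition on regulator indices is satisfied; the antichain hypothesis is given. Therefore \lemref{direct sums in B} gives $X_1 \oplus X_2 \in \calB$ and $\R(X_1 \oplus X_2) = R_1 \oplus R_2$, which takes care of two of the three assertions.

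Next I would turn to the description of the extension. By definition, $X_1 \oplus X_2 = (R_1 \oplus R_2) + \frac{\Z}{e_1}a_1 + \frac{\Z}{e_2}a_2$ inside $\Q(R_1 \oplus R_2)$. Applying \lemref{get summands back} with $R := R_1 \oplus R_2$, $a := a_1$, $b := a_2$ (and using the Bezout coefficients for $\gcd(e_1,e_2)=1$) yields
\[\textstyle
(R_1 \oplus R_2) + \frac{\Z}{e_1} a_1 + \frac{\Z}{e_2} a_2
= (R_1 \oplus R_2) + \frac{\Z}{e_1 e_2}(e_2 a_1 + e_1 a_2),
\]
which is precisely the required formula.

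There is really no obstacle here: the corollary is a direct combination of \lemref{direct sums in B} and \lemref{get summands back}, with the only mild thing to notice being that the antichain and coprimality conditions of \lemref{direct sums in B} are literally the hypotheses assumed, so no further argument is needed.
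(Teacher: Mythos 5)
Your proof is correct and follows exactly the paper's own argument: Lemma \ref{get summands back} gives the single-generator description $X_1\oplus X_2=(R_1\oplus R_2)+\frac{\Z}{e_1e_2}(e_2a_1+e_1a_2)$, and Lemma \ref{direct sums in B} gives membership in $\calB$ together with $\R(X_1\oplus X_2)=R_1\oplus R_2$. Nothing further is needed.
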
 

\begin{proof} \lemref{get summands back} says that $X_1 \oplus X_2 =
  (R_1 \oplus R_2) + \frac{\Z}{e_1 e_2}(e_2 a_1 + e_1 a_2)$. By
  hypothesis $\R(X_i) = R_i$, and, by \lemref{direct sums in B}, $\R(X_1
  \oplus X_2) = R_1 \oplus R_2$. 
\end{proof}

We now can freely add two $\calB$--groups by adding generators, the
description of the sum and its regulator being clear. We clarify in
all generality what the direct sum of two rigid $\calB$--groups looks
like. Note that the essential feature is the overlap in critical types
of the two groups that are added.
%\vfill\pagebreak

\begin{lem}\label{sums of rigid crq groups} Let $\tau_1, \ldots,
  \tau_d, \tau_{d+1}, \ldots, \tau_r, \tau_{r+1}, \ldots \tau_s$ be
  pairwise incomparable types, 
  $d \geq 1$, $r \geq d$, $s \geq r$
\begin{eqnarray*}%\label{}
  A_1 &=& \tau_1 v_1 \oplus \tau_2 v_2 \oplus \cdots \oplus \tau_d v_d
  \oplus \tau_{d+1} v_{d+1} \oplus \cdots \oplus \tau_r v_r, \\
  A_2 &=& \tau_1 w_1 \oplus \tau_2 w_2 \oplus \cdots \oplus \tau_d w_d
  \oplus \tau_{r+1} v_{r+1} \oplus \cdots \oplus \tau_s v_s,
\end{eqnarray*}  
  and 
  \[
  X_1 = A_1 + \frac{\Z}{e_1}(\alpha_1 v_1 + \alpha_2 v_2 + \cdots +
  \alpha_d v_d + \alpha_{d+1} v_{d+1} + \cdots + \alpha_r v_r),
  \]\[ 
  X_2 = A_2 + \frac{\Z}{e_2}(\beta_1 w_1 + \beta_2 w_2 + \cdots +
  \beta_d w_d + \beta_{r+1} v_{r+1} + \cdots + \beta_s v_s)
  \] 
  where $\alpha_i, \beta_j \in \Z$, $\alpha_i$ divides $e_1$,
  $\beta_j$ divides $e_2$, $A_i = \R(X_i)$ and $e_1, e_2$ are
  relatively prime integers. For $i = 1, \ldots, d$, let $f_i =
  \gcd(e_2 \alpha_i, e_1 \beta_i)$. Then there exists a decomposition
  basis $\{v_1', w_1', \ldots, v_d', w_d', v_{d+1}, \ldots,  v_r,
  v_{r+1}, \ldots v_s\}$ of $A_1 \oplus A_2 = \R(X_1 \oplus X_2)$ such
  that with
  $$A = \tau_1 v_1' \oplus \cdots \oplus \tau_d v_d' \oplus \tau_{d+1}
  v_{d+1} \oplus \cdots \oplus \tau_s v_s,$$
  we have 
\begin{eqnarray*}%\label{}
      X_1\oplus X_2 &=& Y \oplus \tau_1 w_1' \oplus \cdots \oplus
      \tau_d w_d' \quad\text{where}\\
      Y &=& A + \frac{\Z}{e_1 e_2} 
  \left(\sum_{i=1}^d f_i v_i' + \sum_{i=d+1}^r e_2 \alpha_i v_i
  + \sum_{i=r+1}^s e_1 \beta_i v_i \right).
\end{eqnarray*}
Furthermore, if $X_1$ and $X_2$ are both indecomposable, then so is
$Y$. All generator coefficients of $Y$ are divisors of $e_1 e_2$. In
passing from the decomposition $X_1 \oplus X_2$ to $Y \oplus \tau_1
w_1' \oplus \cdots \oplus \tau_d w_d'$ the number of summands does not
decrease. 
\end{lem}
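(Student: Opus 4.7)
The plan is to merge the two generators via Corollary~\ref{direct sum of two groups} and then, for each of the $d$ overlapping rank--$2$ $\tau_i$--homogeneous pieces $\tau_i v_i \oplus \tau_i w_i$, perform a unimodular change of basis that concentrates all of the $\tau_i$--contribution of the combined generator onto a single new basis vector $v_i'$, so that the complementary vector $w_i'$ disappears from the generator and $\tau_i w_i'$ splits off as a rank--$1$ summand.

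First, Corollary~\ref{direct sum of two groups} gives $X_1 \oplus X_2 \in \calB$, $\R(X_1 \oplus X_2) = A_1 \oplus A_2$, and
\[
X_1 \oplus X_2 = (A_1 \oplus A_2) + \frac{\Z}{e_1 e_2}\, g,
\]
with
\[
g = \sum_{i=1}^d (e_2 \alpha_i v_i + e_1 \beta_i w_i) + \sum_{i=d+1}^r e_2 \alpha_i v_i + \sum_{i=r+1}^s e_1 \beta_i v_i.
\]
For each $i \in \{1,\ldots,d\}$, I would write $e_2 \alpha_i = f_i p_i$ and $e_1 \beta_i = f_i q_i$ with $\gcd(p_i, q_i) = 1$, pick $x_i, y_i \in \Z$ with $p_i y_i - q_i x_i = 1$, and set $v_i' := p_i v_i + q_i w_i$, $w_i' := x_i v_i + y_i w_i$. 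The transformation being unimodular and $\tau_i$ a rational group closed under integer scalars, one checks $\tau_i v_i' \oplus \tau_i w_i' = \tau_i v_i \oplus \tau_i w_i$ (this is where the $\tau_i$--homogeneity of the overlap is essential), and the $i$--th pair in $g$ becomes $e_2 \alpha_i v_i + e_1 \beta_i w_i = f_i v_i'$. After performing these $d$ substitutions, $g$ lies in $A$ and involves none of $w_1',\ldots,w_d'$, so
\[
X_1 \oplus X_2 = \Bigl(A + \tfrac{\Z}{e_1 e_2}\, g\Bigr) \oplus \tau_1 w_1' \oplus \cdots \oplus \tau_d w_d' = Y \oplus \tau_1 w_1' \oplus \cdots \oplus \tau_d w_d',
\]
and the divisibility assertion follows from $\alpha_i \mid e_1$, $\beta_i \mid e_2$, and $f_i \mid e_2 \alpha_i \mid e_1 e_2$.

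For indecomposability of $Y$, I would apply Lemma~\ref{integral coordinates}(5) and the Product Formula (Theorem~\ref{product formula}): because $\gcd(e_1,e_2)=1$ one finds $f_i = \alpha_i \beta_i$, whence $\mu_{\tau_i}(Y) = \mu_{\tau_i}(X_1)\mu_{\tau_i}(X_2)$ for $i \le d$, $\mu_{\tau_i}(Y) = \mu_{\tau_i}(X_1)$ for $d < i \le r$, and $\mu_{\tau_i}(Y) = \mu_{\tau_i}(X_2)$ for $r < i \le s$. In every case $\mu_{\tau_i}(X_j) \mid \mu_{\tau_i}(Y)$ for the relevant $j \in \{1,2\}$, so every edge of the frame of $X_j$ is also an edge of the frame of $Y$. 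Since both frames are connected (Proposition~\ref{indecomposable via frame}) and share the nonempty vertex set $\{\tau_1,\ldots,\tau_d\}$ (using $d \ge 1$), the frame of $Y$ is connected, so $Y$ is indecomposable. Finally, the new decomposition has $1 + d \ge 2$ summands, which is at least the number of summands in $X_1 \oplus X_2$.

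The main obstacle is the basis change in the overlap: one must verify not only that $\{v_i', w_i'\}$ is a $\Z$--basis of $\Z v_i \oplus \Z w_i$ but that the rank--$2$ $\tau_i$--homogeneous component splits as $\tau_i v_i' \oplus \tau_i w_i'$, a fact that relies crucially on $v_i$ and $w_i$ having the \emph{same} type and on $\tau_i$ being closed under integer scalars. Everything else is then a direct computation combined with the previously established Regulator Criterion, Product Formula, and frame criterion.
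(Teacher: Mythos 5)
Your proposal is correct and follows essentially the same route as the paper: merge the two generators via Corollary~\ref{direct sum of two groups}, then for each overlapping type perform a unimodular change of basis on the $\tau_i$--homogeneous rank--$2$ piece so that the generator coefficient concentrates on $v_i'$ (using $f_i=\alpha_i\beta_i$, which holds since $\gcd(e_1,e_2)=1$), split off the $\tau_i w_i'$, and prove indecomposability of $Y$ by showing every edge of the frames of $X_1$ and $X_2$ survives in the frame of $Y$, which is connected because the two connected frames share the nonempty vertex set $\{\tau_1,\ldots,\tau_d\}$. The only cosmetic difference is your choice of the unimodular matrix and that you compute the invariants of $Y$ directly from Lemma~\ref{integral coordinates}(5) rather than quoting only the Product Formula, which amounts to the same thing.
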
 

In \lemref{sums of rigid crq groups} we assume that the coefficients
$\alpha_i$ are factors of $e_1$ and the coefficients $\beta_i$ are
factors of $e_2$. By \lemref{integral coordinates} this can be done
without loss of generality, also, in examples, this will always be the
case.

\begin{proof} By \corref{direct sum of two groups} 
  \[\textstyle
  X_1 \oplus X_2 = (A_1 \oplus A_2) + \Z \frac{1}{e_1 e_2}
  \left(\sum_{i=1}^d (e_2 v_i + e_1 w_i) + \sum_{i=d+1}^r e_2 \alpha_i v_i
  + \sum_{i=r+1}^s e_1 \beta_i v_i \right)
  \]  
  and $A_1 \oplus A_2 = \R(X_1 \oplus X_2)$. 

  For $i = 1, \ldots, d$, we defined $f_i = \gcd(e_2 \alpha_i, e_1
  \beta_i)$. Then $f_i = \alpha_i \beta_i$ and setting $e_2 \alpha_i =
  f_i \alpha_i'$, $e_1 \beta_i = f_i \beta_i'$, there exist integers
  $a_i,b_i$ such that $a_i \alpha_i' + b_i \beta_i' = 1$ and hence we
  have that $\tau_i v_i \oplus \tau_i w_i = \tau_i (\alpha_i' v_i +
  \beta_i' w_i) \oplus \tau_i (-b_i v_i + a_i w_i)$.  Setting $v_i' =
  \alpha_i' v_i + \beta_i' w_i$ and $w_i' = -b_i v_i + a_i w_i$, we
  have the new decomposition basis $\{v_1', w_1', \ldots, v_d', w_d',
  v_{d+1}, \ldots, v_r, v_{r+1}, \ldots v_s\}$ of $A_1 \oplus A_2 =
  \R(X_1 \oplus X_2)$. Let $Y$ be as stated. Then $X_1 \oplus X_2 = Y
  \oplus \tau_1 w_1' \oplus \cdots \oplus \tau_d w_d'$ and by
  \thmref{product formula} $A = \R(Y)$.

  For clarity note that $\Tc(Y) = \Tc(X_1) \cup \Tc(X_2)$ and
  $\Tc(X_1) \cap \Tc(X_2) = \{\tau_1, \ldots, \tau_d\} \neq \emptyset$
  which makes it possible to merge the generators of $X_1$ and $X_2$.

  Suppose now that $X_1$ and $X_2$ are indecomposable. By
  \propref{indecomposable via frame} the frames of $X_1$ and $X_2$ are
  connected. We will show that the frame of $Y$ is connected. To do so
  recall that $\mu_\tau(Y) = \mu_\tau(X_1) \mu_\tau(X_2)$. So for any
  $\tau \in \Tc(X_1) $, we have that $\mu_\tau(X_1)$ and 
  $\mu_\tau(X_2)$ divide $\mu_\tau(Y)$. This shows that any edge
  between types of either the frame of $X_1$ or $X_2$ is also an edge
  in the frame of $Y$. As the frames of $X_1$ and $X_2$ are connected
  and $\Tc(X_1) \cap \Tc(X_2) \ne \emptyset$ it follows that the frame
  of $Y$ is connected and $Y$ is indecomposable.
\end{proof}

We use our previous shorthand to display the content of
\lemref{sums of rigid crq groups}. The formula 
  \[\textstyle
  [\tau_1\, \ldots \tau_d\, \tau_{d+1}\, \ldots\, \tau_r] \oplus 
  [\tau_1\, \ldots \tau_d\, \tau_{r+1}\, \ldots  \tau_s] =
  [\tau_1\, \ldots \tau_d\, \tau_{d+1}\, \ldots\, \tau_s] \oplus 
  [\tau_1] \oplus \cdots \oplus [\tau_d]
  \] 
correctly reflects the content of \lemref{sums of rigid crq
  groups}. We frequently use this formula without explicit reference. 

\exref{from (2,2) to (2,1,1)} explicitly describes $[\tau_1 \,
\tau_2] \oplus [\tau_1 \, \tau_2] = [\tau_1 \, \tau_2] \oplus
[\tau_1] \oplus [\tau_2]$.

\begin{ex}\label{from (2,2) to (2,1,1)} \rm There are groups realizing
  $\calS(4,2) = \{(2,2), (2,1,1)\} $. 

  In fact, let $p_1, p_2$ be relatively prime integers $\geq 2$, let
  $\tau_1, \tau_2$ be incomparable $p_1p_2$--free types, and let
  \[
  X_1 = (\tau_1 v_1 \oplus \tau_2 v_2) + \Z \frac{1}{p_1} (v_1 + v_2)
  \text{ and }
  X_2 = (\tau_1 w_1 \oplus \tau_2 w_2) + \Z \frac{1}{p_2} (w_1 + w_2).
  \]  
  Then $\R(X_1) = \tau_1 v_1 \oplus \tau_2 v_2$ and $\R(X_2) = \tau_1
  w_1 \oplus \tau_2 w_2$ and $X_1$ and $X_2$ are indecomposable. 
  With $u_1 p_1 + u_2 p_2 = 1$,
  \[\textstyle
  \begin{array}{cc} 
    X = X_1 \oplus X_2 = \left(\tau_1 (-u_1 v_1 + u_2 w_1) \oplus \tau_2
      (- u_1 v_2 + u_2 w_2)\right) \oplus \\
    \oplus
    \left(\left(\tau_1 (p_2 v_1 + p_1 w_1) \oplus 
                \tau_2 (p_2 v_2 + p_1 w_2)\right)
      + \Z \frac{1}{p_1 p_2} \left((p_2 v_1 + p_1 w1) + (p_2 v_2 +
        p_1 w_2)\right)\right),
  \end{array}
  \]
  so $X$ realizes $(1,1,2)$ in addition to $(2,2)$. 
\end{ex} 

\begin{proof} The summands $X_1$ and $X_2$ are indecomposable by
  \propref{indecomposable via frame} and the completely decomposable
  base groups are the regulators in each case (\thmref{product
    formula}. Now $\frac{1}{p_1} (v_1 + v_2) + \frac{1}{p_2} (w_1 +
  w_2) = \frac{1}{p_1 p_2} \left((p_2 v_1 + p_1 w_1) + (p_2 v_2 + p_1
    w_2)\right)$. Set $v_1' = p_2 v_1 + p_1 w_1$, $v_2' = p_2 v_2 +
  p_1 w_2$, $w_1' = -u_1 v_1 + u_2 w_1$, and $w_2' = -u_1 v_2 + u_2
  w_2$. Then (change of basis) $\tau_1 v_1 \oplus \tau_1 w_1 = \tau_1
  v_1' \oplus \tau_1 w_1'$ and $\tau_2 v_2 \oplus \tau_2 w_2 = \tau_2
  v_2' \oplus \tau_2 w_2'$. Hence $X = \left(\tau_1 w_1' \oplus \tau_2
    w_2'\right) \oplus \left(\left(\tau_1 v_1' \oplus \tau_2
      v_2'\right) + \Z \frac{1}{p_1 p_2} (v_1' + v_2')\right)$.
\end{proof} 

A Corner group is of the form $X = [\tau_1 \, \cdots \, \tau_\ell]
\oplus [\tau_1] \oplus \cdots \oplus [\tau_1]$. \propref{c(2n,n)
  realized} shows that certain Corner families of partitions may be
realized by groups that are not Corner groups.

\begin{prop}\label{c(2n,n) realized} The group $X_{2n} := [\tau_1 \
  \tau_2 ] \oplus [\tau_2 \ \tau_3 ] \oplus \cdots \oplus [\tau_n \
  \tau_{n+1}]$ realizes $\calC(2n,n)$.
\end{prop}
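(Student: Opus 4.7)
The plan is to realize every partition in $\calC(2n,n)$ by applying the workhorse Lemma~\ref{sums of rigid crq groups} iteratively inside appropriately chosen blocks of consecutive summands of $X_{2n}$.

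First, I would characterize $\calC(2n,n)$. A partition of $2n$ into exactly $n$ parts necessarily has the form $(r_1,\ldots,r_m,1^{n-m})$ with each $r_j \ge 2$ and $\sum_{j=1}^m r_j = n+m$. Setting $k_j = r_j - 1 \ge 1$ gives $\sum_{j=1}^m k_j = n$, so partitions of $2n$ into $n$ parts correspond (up to reordering) to compositions $n = k_1 + \cdots + k_m$ of $n$ into positive parts.

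Given such a composition $(k_1,\ldots,k_m)$, I would partition the $n$ rank-2 summands of $X_{2n}$ into $m$ consecutive blocks of sizes $k_1,\ldots,k_m$. Within each block, I apply Lemma~\ref{sums of rigid crq groups} repeatedly via the elementary instance
\[
[\tau_i\,\tau_{i+1}]\oplus[\tau_{i+1}\,\tau_{i+2}] \;=\; [\tau_i\,\tau_{i+1}\,\tau_{i+2}]\oplus[\tau_{i+1}],
\]
followed by successive merges of the resulting longer group with the next rank-2 piece in the same block. By induction a block of $k_j$ consecutive rank-2 summands becomes one indecomposable summand $[\tau_{a_j}\,\tau_{a_j+1}\,\cdots\,\tau_{a_j+k_j}]$ of rank $k_j+1$, together with $k_j-1$ rank-1 summands $[\tau_{a_j+1}],\ldots,[\tau_{a_j+k_j-1}]$. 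Indecomposability of the long piece follows directly from the last assertion of Lemma~\ref{sums of rigid crq groups}, since the shared types keep the frame connected at every step.

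Since each merge only touches the generators of its own block, the operations in distinct blocks do not interfere, and we can assemble the results. This produces a decomposition of $X_{2n}$ with exactly $m + \sum_{j}(k_j - 1) = n$ indecomposable summands of ranks $(k_1+1,\ldots,k_m+1,1^{n-m}) = (r_1,\ldots,r_m,1^{n-m})$, which is the desired partition. The only subtlety I anticipate is cosmetic: the merged summands coming from adjacent blocks share a boundary type $\tau_{a_j+k_j} = \tau_{a_{j+1}}$, but such type overlaps are permitted in direct sums of rigid $\calB$-groups (as they already occur in the original presentation of $X_{2n}$), so this is no obstacle to the conclusion $\calC(2n,n)\subseteq\calP(X_{2n})$.
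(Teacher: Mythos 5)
Your proof is correct and follows essentially the same route as the paper: both arguments partition the $n$ rank-$2$ summands into consecutive blocks of sizes $r_j-1$ and repeatedly apply Lemma~\ref{sums of rigid crq groups} (in the form $[\sigma_1\,\sigma_2]\oplus[\sigma_2\,\sigma_3]=[\sigma_1\,\sigma_2\,\sigma_3]\oplus[\sigma_2]$) to collapse each block into one indecomposable summand of rank $r_j$ plus $r_j-2$ rank-one summands, keeping the total number of summands equal to $n$. Your extra remarks on block independence and the shared boundary types are accurate but not needed beyond what the paper already records.
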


\begin{proof} Let $(r_1, r_2, \ldots, r_n) \in \calC(2n,n)$ where
  $\forall\, i \leq m : r_i \geq 2$ and $\forall\, i > m : r_i=1$.
  Note that $r_1 + \cdots + r_m + (n-m) = 2n$, so $r_1 + \cdots + r_m
  = n+m$ and $(r_1-1) + \cdots + (r_m-1) = n$. The group $X_{2n}$ has
  $n$ summands of rank $2$. We partition the $n$ summands of $X_{2n}$
  into groups of $r_1-1, r_2-1, \ldots, r_m-1$ summands. A typical
  summand is of the form $[\sigma_1 \ \sigma_2] \oplus [\sigma_2 \
  \sigma_3] \oplus \cdots \oplus [\sigma_{r-1} \ \sigma_r] = [\sigma_1 \
  \sigma_2 \cdots \sigma_r] \oplus [\sigma_2] \oplus \cdots \oplus
  [\sigma_{r-1}]$ where $[\sigma_1 \ \sigma_2 \cdots \sigma_r]$ is a
  summand of rank $r$. Note that in combining these summands the total
  number of summands has not changed. Doing this for each summand of
  the partition we get summands of rank $r_1, r_2, \ldots, r_m$ and
  $n-m$ summands of rank $1$, a total of $n$ summands. The partition
  $(r_1, r_2, \ldots, r_n) \in \calC(2n,n)$ has been realized by our
  particular decomposition of $X_{2n}$.
\end{proof}

\section{Realizing families $S(n,k)$}

The following proposition illustrates why some groups cannot realize
$S(n,k)$. 

\begin{prop}\label{no realization} Suppose that $G = Y \oplus Z$ is a
  decomposition with fully invariant summands neither of which is
  completely decomposable. Let $n_G = \rk(G)$, $n_Y = \rk(Y)$, and
  $n_Z = \rk(Z)$.  If $Y$ realizes $\calS(n_Y,k_Y)$, and $Z$ realizes
  $\calS(n_Z,k_Z)$.  then $\calP(G) = \calS(n_Y,k_Y) \times
  \calS(n_Z,k_Z) \subsetneqq \calS(n_G,k_Z)$, so $G$ does not realize
  $\calS(n_G,k_Z)$.
\end{prop}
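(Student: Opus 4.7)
My plan is to begin by applying \propref{direct sum of fully invariant subgroups} to the decomposition $G = Y \oplus Z$ with both summands fully invariant. That gives $\calP(G) = \calP(Y) \times \calP(Z)$, and substituting the hypotheses $\calP(Y) = \calS(n_Y, k_Y)$ and $\calP(Z) = \calS(n_Z, k_Z)$ establishes the equality $\calP(G) = \calS(n_Y, k_Y) \times \calS(n_Z, k_Z)$ in one stroke. For the remainder I would assume without loss of generality that $k_Y \leq k_Z$, which is what makes $k_Z$ the correct parameter in the target set $\calS(n_G, k_Z)$.

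Next I would verify the plain inclusion $\calS(n_Y, k_Y) \times \calS(n_Z, k_Z) \subseteq \calS(n_G, k_Z)$. For a partition $P = P_Y \cup P_Z$ in the product, the largest part is $\olP = \max(\overline{P_Y}, \overline{P_Z}) \leq \max(k_Y, k_Z) = k_Z$, and the length satisfies $|P| = |P_Y| + |P_Z| \leq (n_Y + 1 - k_Y) + (n_Z + 1 - k_Z)$, so that $k_Z + |P| \leq n_G + 2 - k_Y \leq n_G + 1$ since $k_Y \geq 1$. Hence $P \in \calS(n_G, k_Z)$.

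The real content is the strict inclusion, and this is where the hypothesis that neither summand is completely decomposable enters. Since $Y$ is not completely decomposable, the all-ones partition $(1^{n_Y})$ does not belong to $\calP(Y) = \calS(n_Y, k_Y)$; but $(1^{n_Y})$ lies in $\calS(n_Y, k_Y)$ precisely when $k_Y = 1$, so $k_Y \geq 2$, and symmetrically $k_Z \geq 2$. With this in hand, I would exhibit the hook $H = (k_Z, 1^{n_G - k_Z})$, which lies in $\calS(n_G, k_Z)$, and show that $H$ is \emph{not} in the product. Any decomposition $H = P_Y \cup P_Z$ must place the unique part of size $k_Z$ in exactly one of $P_Y, P_Z$, leaving the other component equal to the all-ones partition on all $n_Y$ or all $n_Z$ coordinates respectively. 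As just noted, an all-ones partition of $n_Y$ (resp.\ $n_Z$) fails the hook condition of $\calS(n_Y, k_Y)$ (resp.\ $\calS(n_Z, k_Z)$) because $k_Y, k_Z \geq 2$, a contradiction. Thus $H \notin \calS(n_Y, k_Y) \times \calS(n_Z, k_Z)$, completing the proof.

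The main obstacle is the strictness step; the equality and the plain inclusion are essentially bookkeeping. The leverage is the translation of the geometric hypothesis ``neither $Y$ nor $Z$ is completely decomposable'' into the arithmetic fact $k_Y, k_Z \geq 2$, which forbids the maximal hook from being split across the fully invariant decomposition.
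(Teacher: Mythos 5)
Your proof is correct and follows essentially the same route as the paper: \propref{direct sum of fully invariant subgroups}(2) gives $\calP(G)=\calP(Y)\times\calP(Z)$, and the strictness comes from the hook $(k_Z,1^{n_G-k_Z})$, which cannot be split across the product because neither summand is completely decomposable. You additionally verify the inclusion of the product in $\calS(n_G,k_Z)$, a step the paper leaves implicit.
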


\begin{proof} By \propref{unique hook}(2) we have that
  $\calP(Y) = \calS(n_Y,k_Y)$ and $\calP(Z) = \calS(n_Z,k_Z)$ and by
  \propref{direct sum of fully invariant subgroups}(2) $\calP(X) =
  \calP(Y) \times \calP(Z) = \calS(n_Y,k_Y) \times \calS(n_Z,k_Z)$.
  Without loss of generality, suppose $k_Z\geq k_Y$. By way of
  contradiction assume that $\calS(n_Y,k_Y) \times \calS(n_Z,k_Z) =
  \calS(n_X,k_Z)$. Then $X$ realizes $\calS(n_X,k_Z)$ and, in
  particular, $(k_Z,1^{n_X-k_Z}) \in \calS(n_X,k_Z)$. It follows that
  $(1^{n_Y}) \in \calP(Y)$ and $Y$ is completely decomposable, or else
  $(1^{n_Z}) \in \calP(Z)$ and $Z$ is completely decomposable, either
  way a contradiction.
\end{proof}

Our main result \thmref{complete answer} determines completely which
families $\calS(n,k)$ can be realized in $\calB$ and which cannot.

\begin{thm}\label{complete answer} \rm \hfill 
\begin{enumerate} 
\item The families $\calS(n,n), \calS(n,1), \calS(n,2), \calS(n,n-1)$
  are realizable in $\calB$.
\item The special families $\calS(5,3), \calS(6,4), \calS(4,2)$ can be
  realized in $\calB$.
\item All other hooked families $\calS(n,k)$ are not realizable in
  $\calB$. Specifically, the families
  $\calS(n,k)$ for $k \geq 3, n \geq 2k$, 
  $\calS(n,6)$ for $n \geq 10$, 
  $\calS(7,4)$,
  $\calS(7,5)$, $\calS(8,5)$, $\calS(9,5)$, 
  $\calS(8,6)$, $\calS(9,6)$, 
  $\calS(9,7)$ cannot be realized in $\calB$. 
\end{enumerate} 
\end{thm}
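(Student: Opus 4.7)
The theorem breaks naturally into two halves: constructive realizability (items (1) and (2)) and obstructions to realizability (item (3)). My plan is to handle them with two separate techniques.

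\emph{Realizability.}  The two trivial families $\calS(n,n)=\{(n)\}$ and $\calS(n,1)=\{(1^n)\}$ are witnessed by any rigid indecomposable $\calB$--group of rank $n$ (via \lemref{existence indecomposable}) and by any rigid completely decomposable group of rank $n$ respectively.  For $\calS(n,2)=\{(2^a,1^{n-2a}):1\le a\le \lfloor n/2\rfloor\}$ the plan is to assemble a chain of overlapping indecomposable rank--$2$ pieces, say $X=[\tau_1\,\tau_2]\oplus[\tau_2\,\tau_3]\oplus\cdots$, and then exploit the basic refactoring identity $[\sigma\,\tau]\oplus[\sigma\,\tau]=[\sigma\,\tau]\oplus[\sigma]\oplus[\tau]$ provided by \lemref{sums of rigid crq groups} to move through all admissible values of $a$.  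For $\calS(n,n-1)$, whose partitions are exactly the two--part splits $(r,n-r)$, I would construct a single block--rigid crq group of rank $n$ in standard form (\lemref{integral coordinates}) with enough independent regulator--index primes that the Regulator Criterion (\lemref{regulator criterion 1}) permits splitting off an indecomposable rank--$r$ summand for every $1\le r\le n-1$, using \lemref{sums of rigid crq groups} to combine pieces.  The three isolated families $\calS(4,2),\ \calS(5,3),\ \calS(6,4)$ are small enough to be handled by writing down explicit generators and applying \lemref{sums of rigid crq groups} directly.

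\emph{Non--realizability.}  Here the workhorses are \lemref{bound for ranks}, \lemref{pinning down critical types}, and \lemref{indecomposable fully invariant summand}.  The general template is: suppose for contradiction that $X\in\calB$ realizes $\calS(n,k)$.  Then $X$ realizes the hook $(k,1^{n-k})$, so $X=[\tau_1\cdots\tau_k]\oplus[\sigma_1]\oplus\cdots\oplus[\sigma_{n-k}]$.  Whenever $\calS(n,k)$ also contains a two--part partition $(r_1,r_2)$ with $r_1,r_2\ge 2$, \lemref{pinning down critical types} forces $\Tc(X)=\{\tau_1,\ldots,\tau_k\}$ with each $\sigma_i$ among the $\tau_j$; and \lemref{bound for ranks} then forces $\rk(R_{\tau_j})\le 2$ for all $j$, so $n=\sum_j\rk(R_{\tau_j})\le 2k$.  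This already rules out $\calS(n,k)$ for $k\ge 3$ and $n=2k$ by examining a third partition in $\calS(2k,k)$ such as $(k-1,k-1,2)$ or $(2^k)$ and deriving an invariant--theoretic incompatibility via \thmref{product formula}.  For $n>2k$, $\calS(n,k)$ contains no two--part partition with both parts $\ge 2$, so one replaces $(r_1,r_2)$ by the shortest available partition (a three-- or four--part one) and argues analogously using \lemref{bound for ranks} together with \lemref{completely decomposable tau homogeneous fully invariant summand} to rule out rank--$1$ fully invariant summands.  The listed sporadic families $\calS(7,4), \calS(7,5), \calS(8,5), \calS(9,5), \calS(8,6), \calS(9,6), \calS(9,7)$ and the family $\calS(n,6)$ for $10\le n\le 11$ fall outside the $n\ge 2k$ regime and require case--by--case refinement: for each, one identifies two specific partitions in $\calS(n,k)$ whose simultaneous realization forces conflicting frames in the sense of \propref{indecomposable via frame} or conflicting $\mu_\tau$--invariants via \thmref{product formula}.

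\emph{Expected main obstacle.}  The realizability constructions for $\calS(n,2)$ and $\calS(n,n-1)$, while conceptually clear, demand that the chosen regulator indices and coefficients $\alpha_\rho$ meet the Regulator Criterion simultaneously for every intended refactoring; the bookkeeping for \lemref{sums of rigid crq groups} in iterated form is delicate.  On the obstruction side, the hardest cases are the sporadic ones, especially $\calS(n,6)$ with $n\in\{10,11\}$ and $\calS(9,7)$, because there $n<2k$ so the clean rank--count contradiction is unavailable and one must exhibit a specific pair of partitions in $\calS(n,k)$ whose joint realization would force an impossible configuration of frames or invariants.  I expect that step to absorb the bulk of the technical work and to require the full strength of the machinery of Section~5 on $\calB$--groups.
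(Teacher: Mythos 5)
Your overall architecture matches the paper's, but three concrete pieces of your plan would fail or are missing. First, the construction you describe for $\calS(n,2)$ --- a chain $[\tau_1\,\tau_2]\oplus[\tau_2\,\tau_3]\oplus\cdots$ of rank--$2$ pieces with \emph{distinct} overlapping types --- does not work: merging adjacent links produces summands of rank $\geq 3$ (e.g.\ $[\tau_1\,\tau_2]\oplus[\tau_2\,\tau_3]=[\tau_1\,\tau_2\,\tau_3]\oplus[\tau_2]$ realizes $(3,1)$), and since $\calP(X)$ is hooked for $X\in\calB$ such a group can then never realize $(2,1^{n-2})$. The refactoring identity you quote, $[\sigma\,\tau]\oplus[\sigma\,\tau]=[\sigma\,\tau]\oplus[\sigma]\oplus[\tau]$, is the right tool, but it forces the construction to use repeated copies over the \emph{same} type pair $\{\tau_1,\tau_2\}$ with pairwise coprime regulator indices, which is what the paper does. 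Second, for $\calS(5,3)$ and $\calS(6,4)$ (and for $\calS(n,n-1)=\calC(n,2)$, where the paper simply invokes Corner groups) the refactoring lemma alone cannot produce all required partitions: for instance $(4,2)$ and $(3,3)$ in $\calS(6,4)$ are not reachable by merging summands of $[\tau_1\tau_2]\oplus[\tau_1\tau_3]\oplus[\tau_2\tau_4]$. The paper instead constructs a second, differently decomposed group with the same regulator and the same invariants $\mu_\tau$ and invokes the near--classification (\thmref{near iso criterion}) together with $\calP(G)=\calP(G')$ for $G\nriso G'$. Your plan omits this near--isomorphism step, and without it item (2) does not go through.

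Third, and most seriously, your treatment of non--realizability for $n>2k$ has no workable mechanism. As you note, for $n>2k$ the family $\calS(n,k)$ contains no two--part partition with both parts $\geq 2$, so \lemref{pinning down critical types} and the resulting bound $n\leq 2k$ are unavailable; but substituting a three--part partition only yields $\rk(R_\tau)\leq 3$ via \lemref{bound for ranks}, which gives no contradiction, and a direct case analysis for arbitrary $n$ is not feasible. The missing idea is the monotonicity reduction (\propref{getting worse}): each $P\in\calS(n,k)$ extends to $(P,1)\in\calS(n+1,k)$, and factoring out the corresponding rank--one summand of a realizing group produces a group realizing $P$; hence non--realizability propagates upward in $n$ for fixed $k$. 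This single observation is what disposes of all $n>2k$ (given the base case $n=2k$), of $\calS(n,6)$ for $n\geq 11$, and of $\calS(8,5)$, $\calS(9,5)$, $\calS(9,6)$ from the sporadic computations at $\calS(7,5)$ and $\calS(8,6)$. Finally, your sketch of the sporadic contradictions (pin down the shape of $X$ from the hook and from $(2^m)$ or a similar short partition, then compare the $\mu_\tau$ via \thmref{product formula}) is the correct strategy, but essentially all of the content of item (3) lives in those explicit invariant computations, which your plan leaves unexecuted.
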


\begin{proof} Details of the proof are contained in the following
  lemmas, corollaries and examples:

(1) \lemref{special cases}.

(2) \exref{S53}, \exref{S64}, \exref{from (2,2) to (2,1,1)}.

(3) \corref{fail for n >= 2m m >= 3}, \exref{S106fail},
\corref{S(n,6)fail}, \exref{S74fail}, \exref{S75fail},
\exref{S86fail}, \exref{S97fail}.
\end{proof} 

\begin{lem}\label{special cases} \hfill 
\begin{enumerate}
\item $S(n,1) = \{(1, \ldots, 1)\}$ is realized by any completely
  decomposable group of rank $n$.
\item $S(n,n) = \{(n)\}$ is realized by any indecomposable
  $\calB$--group of rank $n$.
\item The set of partitions $\calS(n,2)$ can be realized.
\item $\calS(n,n-1)$ can be realized by suitable Corner groups.
\end{enumerate} 
\end{lem}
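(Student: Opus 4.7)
The proof splits naturally into the four parts, with (1), (2) and (4) immediate from definitions and earlier results, and (3) requiring a short construction.

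For (1), unpacking $\calS(n,1)=\{P:\ol P\le 1,\ |P|\le n\}$ forces every part to equal $1$, so $\calS(n,1)=\{(1^n)\}$. A completely decomposable group of rank $n$ has every direct summand again completely decomposable (by Baer's theorem), so each of its indecomposable decompositions is the rank-one decomposition, giving $\calP(G)=\{(1^n)\}$. For (2), the length constraint in $\calS(n,n)$ forces $|P|\le 1$, so $\calS(n,n)=\{(n)\}$, which is trivially the only partition realized by an indecomposable rank-$n$ group. For (4), the defining inequalities of $\calS(n,n-1)$ (largest part $\le n-1$ and length $\le 2$) cut out exactly the two-part partitions of $n$, so $\calS(n,n-1)=\calC(n,2)$. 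Now \thmref{decompositions of Corner groups}(2) applied with $k=2$ produces a Corner group $X$ of rank $n$ with $\calP(X)=\calC(n,2)=\calS(n,n-1)$.

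The substantive case is (3). Setting $m=\lfloor n/2\rfloor$, one checks that
$$\calS(n,2)=\{(2^j,1^{n-2j}):1\le j\le m\}.$$
The plan is to choose pairwise distinct primes $p_1,\ldots,p_m$, pairwise incomparable $p_1\cdots p_m$--free types $\tau_1,\tau_2$ (and a further such type $\tau_3$ if $n$ is odd), and form the indecomposable rank-two $\calB$--groups
$$X_j=(\tau_1 v_j\oplus\tau_2 w_j)+\Z\tfrac{1}{p_j}(v_j+w_j)$$
from \exref{from (2,2) to (2,1,1)}. Let $G=X_1\oplus\cdots\oplus X_m$ if $n$ is even and $G=X_1\oplus\cdots\oplus X_m\oplus\tau_3 u$ if $n$ is odd; by \lemref{direct sums in B} we have $G\in\calB$, and its given decomposition realizes $(2^m)$ (respectively $(2^m,1)$). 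To reach the remaining partitions, iterate the shorthand identity $[\tau_1\,\tau_2]\oplus[\tau_1\,\tau_2]=[\tau_1\,\tau_2]\oplus[\tau_1]\oplus[\tau_2]$ recorded after \lemref{sums of rigid crq groups}: each application trades two rank-two summands for one rank-two summand together with two rank-one summands, so $m-k$ applications yield $(2^k,1^{n-2k})\in\calP(G)$ for every $k=1,\ldots,m$.

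The only bookkeeping point is to check that \lemref{direct sums in B} and \lemref{sums of rigid crq groups} remain applicable at each step: the typeset stays the antichain $\{\tau_1,\tau_2\}$ (together with $\tau_3$ if present), and the regulator index of each newly-built rank-two summand is a product of some of the $p_j$, hence remains coprime to the indices of the remaining original summands and to the trivial index of $\tau_3 u$. I do not anticipate any deeper obstacle; the entire lemma is essentially a packaging of the constructions and shorthand already developed in Sections 5 and 6.
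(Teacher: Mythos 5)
Your proposal is correct and follows essentially the same route as the paper: part (3) is handled by summing copies of $[\tau_1\,\tau_2]$ with pairwise coprime regulator indices and repeatedly applying the merge identity from \lemref{sums of rigid crq groups}, and part (4) by identifying $\calS(n,n-1)$ with $\calC(n,2)$ and invoking Corner's theorem. The only difference is that you spell out parts (1) and (2) and the coprimality bookkeeping in (3), which the paper leaves implicit.
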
 

\begin{proof} (3) First note that $P \in S(n,2)$ means that $P =
  (2,\ldots,2,1,\ldots,1) = (2^{i},1^{n-2i})$. So, for $n$ even,
  $S(n,2) = \{(2^{n/2}), \ldots, (2,1^{n-2})\}$. Using
  \lemref{sums of rigid crq groups} repeatedly we have for $n$ even,
  $n[\tau_1,\tau_2] = [\tau_1] \oplus [\tau_2] \oplus
  (n-1)[\tau_1,\tau_2] = \cdots = [\tau_1,\tau_2] \oplus (n/2 -
  1)[\tau_1] + (n/2 - 1)[\tau_2]$. For odd $n$ we have an obvious
  variant.

  (4) $\calS(n,n-1) = \{(n-1,1), (n-2,2), \ldots\}$ can
  be realized by Corner's Theorem.
\end{proof}

\begin{rem} \rm In all examples that follow we assume that the groups
  are given in the form of \lemref{integral coordinates} so that we
  have a handy formula for the invariants. The naming of the elements
  of the decomposition bases are irrelevant as only the associated
  types matter, i.e., in two decompositions of a group the $v_1$ in
  one decomposition and the $v_1$ in the other decomposition need not
  be the same. 
\end{rem} 

\begin{ex}\rm\label{S53}\rm Let $p_1, p_2, p_3$ be pairwise relatively
  prime integers $\geq 2$, let $\tau_1, \tau_2, \tau_3$ be pairwise
  incomparable $p_1 p_2 p_3$--free types, and let
\begin{enumerate} 
\item $R = \tau_1 v_1 \oplus \tau_2 v_2 \oplus \tau_3 v_3
  \oplus \tau_1 w_1 \oplus \tau_2 w_2$,
\item $X_1 = (\tau_1 v_1 \oplus \tau_2 v_2) + \Z \frac{1}{p_2
    p_3}(v_1 + v_2)$,
\item $X_2 = (\tau_1 w_1 \oplus \tau_3 v_3) + \Z
  \frac{1}{p_1}(w_1 + v_3)$, and 
\item $X_3 = \tau_2 w_2$. 
\end{enumerate}
Then $X = X_1 \oplus X_2 \oplus X_3$ realizes $S(5,3) =
\{(3,1,1),(3,2),(2,2,1)\}$. 

\begin{proof} By \lemref{sums of rigid crq groups}
  $X_1 \oplus X_2 = [\tau_1 \ \tau_2] \oplus [\tau_1 \ \tau_3] \oplus
  [\tau_2] = [\tau_1 \ \tau_2 \ \tau_3] \oplus [\tau_1] \oplus [\tau_2]$
  Thus $X = X_1 \oplus X_2 \oplus X_3$ realizes $(2,2,1)$ and
  $(3,1,1)$. 

  Let 
\begin{enumerate} 
\item $Y_1 = (\tau_1 v_1 \oplus \tau_2 v_2 \oplus \tau_3 v_3) + \Z
  \frac{1}{p_1 p_2}(v_1 + p_1 v_2 + p_2 v_3)$,
\item $Y_2 = (\tau_1 w_1 \oplus \tau_2 w_2) + \Z \frac{1}{p_3}(w_1 +
  w_2)$. 
\end{enumerate} 
  Then 

  $Y_1 \oplus Y_2 = [(\tau_1 v_1'' \oplus \tau_2 v_2'' \oplus \tau_3 v_3) +
  \frac{\Z}{p_1 p_2 p_3}(v_1'' + p_1 v_2'' + p_2 p_3 v_3)] \oplus
  \tau_1 w_1'' \oplus \tau_2 w_2''$ where $\tau_1 v_1 \oplus \tau_1 w_1
  = \tau_1 v_1'' \oplus \tau_1 w_1''$, and $\tau_2 v_2 \oplus \tau_2 w_2
  = \tau_2 v_2'' \oplus \tau_2 w_2''$.

  Then $Y = Y_1 \oplus Y_2$ realizes $(3,2)$ by definition.

  So far $X = X_1 \oplus X_2 \oplus X_3$ and $Y = Y_1 \oplus Y_2$ are
  independently constructed groups although $R = \R(X) = \R(Y)$ and
  they are both contained in $\Q R$, the divisible hull of $R$. We will
  show that they are nearly isomorphic and therefore they share their
  decomposition properties.  

  Using the Product Formula (\thmref{product formula}) we immediately
  see that 
  \[\textstyle
  \mu_{\tau_1}(X) = p_1 p_2 p_3,\quad \mu_{\tau_2}(X) = p_2 p_3,\quad
  \mu_{\tau_3}(X) = p_1.
  \] 
  and 
  \[\textstyle
  \mu_{\tau_1}(Y) = p_1 p_2 p_3,\quad \mu_{\tau_2}(Y) = p_2 p_3,\quad
  \mu_{\tau_3}(Y) = p_1.  
  \] 
  Hence $X \nriso Y$, and we are done.
\end{proof} 
\end{ex} 
%\vfill\pagebreak

\begin{ex}\label{S64}\rm The $\calB$--group $X = X_1 \oplus X_2
  \oplus X_3$ where  
\begin{enumerate} 
\item $X_1 = (\tau_1 v_1 \oplus \tau_2 v_2) + \Z \frac{1}{5 \cdot
  11}(v_1 + v_2)$, \newline
  where $\R(X_1) = \tau_1 v_1 \oplus \tau_2
  v_2$ and $\{v_1, v_2\}$ is a $5 \cdot 11$--basis of $\R(X_1)$,
\item $X_2 = (\tau_1 w_1 \oplus \tau_3 v_3) + \Z \frac{1}{7}(w_1 +
  v_3)$, \newline
  where $\R(X_2) = \tau_1 w_1 \oplus \tau_3 v_3$ and $\{w_1,
  v_3\}$ is a $7$--basis of $\R(X_2)$,
\item $X_3 = (\tau_2 w_2 \oplus \tau_4 v_4) + \Z \frac{1}{3}(w_2 +
  v_4)$, \newline
  where $\R(X_3) = \tau_2 w_2 \oplus \tau_4 v_4$ and $\{w_2,
  v_4\}$ is a $3$--basis of $\R(X_3)$.
\end{enumerate} 
  realizes $\calS(6,4)$.
\end{ex} 

\begin{proof} For easy reference note that $$\calS(6,4) = \{(4,2),
  (4,1,1), (3,3), (3,2,1), (2,2,2)\}.$$ It is assumed that all
  critical types $\tau_i$ are $3 \cdot 5 \cdot 7 \cdot 11$--free.  By
  definition $X := [\tau_1 \, \tau_2] \oplus [\tau_1 \, \tau_3] \oplus
  [\tau_2 \, \tau_4] = [\tau_1 \, \tau_2 \, \tau_3] \oplus [\tau_1]
  \oplus [\tau_2 \, \tau_4] = [\tau_1 \, \tau_2 \, \tau_3 \, \tau_4]
  \oplus [\tau_1] \oplus [\tau_2]$ showing that any such $X$ will
  indeed realize $(4,1,1), (3,2,1)$ and $(2,2,2)$.  We record the
  invariants of $X$.
\begin{equation*}\label{222 invariants}
  \mu_{\tau_1}(X) = 5 \cdot 7 \cdot 11, \quad \mu_{\tau_2}(X) = 5
  \cdot 11 \cdot 3, \quad \mu_{\tau_3}(X) = 7, 
  \quad \mu_{\tau_4}(X) = 3. 
\end{equation*}

Next let $Y = Y_1 \oplus Y_2$ where 
\begin{enumerate} 
\item $Y_1 = (\tau_1 v_1 \oplus \tau_2 v_2 \oplus \tau_3 v_3 \oplus
  \tau_4 v_4) + \Z \frac{1}{7 \cdot 3}(3 v_1 + 7 v_2 +
  3 v_3 + 7 v_4)$,
\item $Y_2 = (\tau_1 w_1 \oplus \tau_2 w_2) + \Z
  \frac{1}{5 \cdot 11}(w_1 + w_2)$. 
\end{enumerate}
It is easily checked that $\R(Y_1) = \tau_1 v_1 \oplus \tau_2 v_2
\oplus \tau_3 v_3 \oplus \tau_4 v_4$, $\R(Y_2) = \tau_1 w_1 \oplus
\tau_2 w_2$ and $Y_1, Y_2$ are indecomposable. The invariants of $Y$
are as follows.
\begin{eqnarray*}%\label{}
  \mu_{\tau_1}(Y) &=& \frac{7 \cdot 3}{3} \cdot 5 \cdot 11 =  5 \cdot 11
  \cdot 7, \quad \mu_{\tau_2}(Y) = \frac{7 \cdot 3}{7} \cdot 5 \cdot 11 = 5
  \cdot 11 \cdot 3, \\
  \mu_{\tau_3}(Y) &=& \frac{7 \cdot 3}{3} = 7, \quad
  \mu_{\tau_4}(Y) = \frac{7 \cdot 3}{7} = 3.
\end{eqnarray*}

We see that $X$ and $Y$ have the same regulator and the same
invariants, hence $X \nriso Y$ and $X$ realizes $(4,2)$. 

Finally, let $Z = Z_1 \oplus Z_2$ where 
\begin{enumerate} 
\item $Z_1 = (\tau_1 v_1 \oplus \tau_2 v_2 \oplus \tau_3 v_3) +
  \Z \frac{1}{7 \cdot 5} (v_1 + 7 v_2 + 5 v_3)$,
\item $Z_2 = (\tau_1 w_1 \oplus \tau_2 w_2 \oplus \tau_4 v_4) +
  \Z \frac{1}{3 \cdot 11} (3 w_1 + w_2 + 11 v_4)$.
\end{enumerate}
It is routine to verify that $\R(Z_1) = \tau_1 v_1 \oplus
\tau_2 v_2 \oplus \tau_3 v_3$ and $\R(Z_2) = \tau_1 w_1 \oplus \tau_2
w_2 \oplus \tau_4 v_4$, hence $\R(Z) = \R(X)$. 
We now compute invariants. 
\begin{eqnarray*}%\label{}
  \mu_{\tau_1}(Z) &=& 7 \cdot 5 \cdot \frac{3 \cdot 11}{3} = 7 \cdot 5
  \cdot 11, \quad
  \mu_{\tau_2}(Z) = \frac{7 \cdot 5}{7} \cdot 3 \cdot 11 = 5 \cdot 3
  \cdot 11, \\ 
  \mu_{\tau_3}(Z) &=& \frac{5 \cdot 7}{5} = 7, \quad
  \mu_{\tau_4}(Z) = \frac{3 \cdot 11}{11} = 3.
\end{eqnarray*}
Hence $Z_1, Z_2$ are indecomposable, $Z \nriso X$ and $X$ realizes
$(3,3)$.
\end{proof}

It is intuitively clear that ``larger'' sets of partitions cannot be
realized if ``smaller'' ones cannot.

\begin{prop}\label{getting worse}
  Let $n' > n \geq k$. If $\calS(n',k)$ can be realized, then so can
  $\calS(n,k)$. Equivalently, if $\calS(n,k)$ cannot be realized, then
  neither can $\calS(n',k)$.
\end{prop}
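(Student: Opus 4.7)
The plan is to argue by induction on $d := n' - n$. The case $d = 0$ is vacuous, so the real work is the single inductive step: from a group $X$ of rank $n+1$ realizing $\calS(n+1, k)$, construct a group $Y$ of rank $n$ realizing $\calS(n, k)$.

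Since $(k, 1^{n+1-k}) \in \calS(n+1, k) \subseteq \calP(X)$, \thmref{main decomposition} gives a decomposition $X = A \oplus C$ with $A$ indecomposable of rank $k$ and $C$ completely decomposable of rank $n+1-k \ge 1$. I will write $C = D \oplus c$ with $c$ of rank $1$ and $D$ cd of rank $n-k$, and define $Y := A \oplus D$, which lies in $\calB$ by \lemref{direct sums in B} and has rank $n$. The choice of which rank-$1$ summand of $C$ to call $c$ is delicate; see the last paragraph.

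To show $Y$ realizes $\calS(n, k)$, note that the hook $(k, 1^{n-k})$ is realized by $Y$ immediately from its construction. For any other $Q = (r_1, \ldots, r_s) \in \calS(n, k)$, the extension $Q^+ := (r_1, \ldots, r_s, 1)$ lies in $\calS(n+1, k)$, as $\overline{Q^+} = \overline Q \le k$ and $k + |Q^+| = k + |Q| + 1 \le n + 2$. Hence $X$ realizes $Q^+$, giving $X = E_1 \oplus \cdots \oplus E_s \oplus F$ with $\rk E_i = r_i$ and $\rk F = 1$. Comparing this with $X = Y \oplus c$, the target is to cancel the rank-$1$ parts to conclude $Y \nriso E_1 \oplus \cdots \oplus E_s$; Arnold's theorem will then transfer this decomposition to $Y$ and thereby realize $Q$.

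The main obstacle is precisely this cancellation, since $c$ and $F$ can a priori differ in type, so near-isomorphism cancellation is not immediate. The remedy begins with \thmref{main decomposition}: the cd summand of $X$ is unique up to isomorphism, so $F$ is isomorphic to some rank-$1$ direct summand of $C$, which means $c$ can be chosen within $C$ to be isomorphic to $F$ for a single fixed $Q$. The genuinely subtle point is that $c$ must be fixed once and for all — a single $Y$ is wanted — while $F$ varies with $Q$. I will address this with a combinatorial argument: the requirement that $X$ realize all of $\calS(n+1, k)$, combined with \lemref{bound for ranks} and \lemref{pinning down critical types}, severely restricts the types that may occur as rank-$1$ summands of $C$ and as the rank-$1$ summand $F$ in the various $Q^+$-decompositions, to the point that a single type $\sigma$ may be chosen that occurs as the rank-$1$ part in some $Q^+$-decomposition of $X$ for every $Q \in \calS(n, k)$; taking $c$ of type $\sigma$ then makes the cancellation argument uniform in $Q$ and completes the construction.
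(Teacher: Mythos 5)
Your overall route is the same as the paper's: append a part $1$ to each $Q\in\calS(n,k)$ to land in $\calS(n+1,k)$, take a corresponding indecomposable decomposition of $X$, and delete the rank--one summand, iterating on $n'-n$. The paper's own proof simply exhibits $X=Y_1\oplus\cdots\oplus Y_t\oplus Y$ with $\rk(Y)=1$ and declares that $X/Y$ realizes $(r_1,\dots,r_t)$, without discussing whether one and the same quotient serves every partition; so the subtlety you isolate --- that the deleted rank--one summand $F$ varies with $Q$ while a single group of rank $n$ is required --- is genuine, and the paper elides it. Your reduction of the problem is also correct as far as it goes: once $c\cong F$, the isomorphism $Y\oplus c\cong E_1\oplus\cdots\oplus E_s\oplus F\cong E_1\oplus\cdots\oplus E_s\oplus c$ gives $Y\nriso E_1\oplus\cdots\oplus E_s$ straight from the definition of near--isomorphism, and Arnold's theorem then transfers the decomposition to $Y$.

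The gap is that your resolution of the uniformity problem is announced rather than carried out, and the tools you cite will not deliver it in general. The assertion that a single type $\sigma$ occurs as the type of the rank--one part of some $Q^+$--decomposition for \emph{every} $Q\in\calS(n,k)$ is precisely what needs proof; but \lemref{pinning down critical types} requires $X$ to realize a two--part partition with both parts at least $2$, which $\calS(n+1,k)$ supplies only when $n+1\le 2k$ (for example $\calS(n,2)$ with $n\ge 5$, a realizable case by \lemref{special cases}, contains no two--part partition at all), and \lemref{bound for ranks} only bounds multiplicities of critical types. What \thmref{main decomposition} does give cleanly is that every complement of a rank--one summand of type $\sigma$ is nearly isomorphic to $A\oplus(C\ominus\sigma)$, where $C\ominus\sigma$ is $C$ with one rank--one summand of type $\sigma$ removed; hence $\calS(n,k)$ is covered by the finitely many sets $\calP(A\oplus(C\ominus\sigma))$, $\sigma\in\Tc(C)$. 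But no pigeonhole converts a covering into a single $\sigma$ that works for all $Q$, so until that claim is actually established your argument only shows that each individual $Q\in\calS(n,k)$ is realized by \emph{some} group of rank $n$ --- weaker than the statement and insufficient for the corollaries that invoke it. In fairness, the paper's proof stops at exactly the same point without flagging it; you have at least named the missing step, but you have not closed it.
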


\begin{proof} Let $(r_1, \ldots, r_t) \in \calS(n,k)$ and $n' = n+1$.
  Then $k + t \leq n+1$. Consider $P = (r_1, \ldots, r_t, 1) \in
  \calP(n')$. Then $k + (t+1) \leq n+1 + 1 = n' + 1$, so $P \in
  \calS(n',k)$. Suppose $\calS(n',k)$ can be realized.  Then there is
  a group $X$ and a decomposition $X = Y_1 \oplus \cdots \oplus Y_t
  \oplus Y$ such that $\rk Y_i = r_i$, the $Y_i$ are indecomposable
  and $\rk(Y) = 1$. Then $X/Y$ realizes $(r_1, \ldots, r_t)$. Repeat
  for arbitrary $n' > n$ until $\calS(n,k)$ is reached. 
\end{proof} 
%\vfill\pagebreak

\begin{prop}\label{not realizable bunch} There is no $\calB$--group
  realizing $\calS(2m,m)$ provided $m \geq 3$.
\end{prop}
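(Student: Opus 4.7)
My plan is to assume for contradiction that $X \in \calB$ realizes $\calS(2m,m)$ with $m \geq 3$, and derive a contradiction by combining information from four partitions in $\calS(2m,m)$: the hook $(m,1^m)$, the pair $(m,m)$, the ``all twos'' partition $(2^m)$, and the length-$(m+1)$ partition $(2^{m-1},1,1)$. For $m \geq 3$ each satisfies $\olP \leq m$ and $|P| \leq m+1$, so all lie in $\calS(2m,m)$.

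I will first pin down the structure of $X$. From the realizations of $(m,1^m)$ and $(m,m)$ (the latter with both parts $m \geq 2$), \lemref{pinning down critical types} combined with \lemref{bound for ranks} yields $\Tc(X) = \{\tau_1,\ldots,\tau_m\}$ with $\rk R_\tau(X) = 2$ at every type, and a rank count then forces $X = A \oplus B$ where $A = [\tau_1 \cdots \tau_m]$ is indecomposable of rank $m$ and $B = [\tau_1] \oplus \cdots \oplus [\tau_m]$. Next, the $(2^m)$ realization $X = V_1 \oplus \cdots \oplus V_m$ makes each $\tau_i$ appear in exactly two $\Tc(V_l)$, yielding a $2$-regular multigraph on $\{\tau_1,\ldots,\tau_m\}$ whose underlying simple graph equals the frame of $A$ (via the Product Formula and pairwise coprimality of the regulator indices $e_{V_l} := [V_l : \R(V_l)]$ from \lemref{direct sums in B}). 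Indecomposability of $A$ forces this graph to be connected, and since a $2$-regular connected simple graph on $m \geq 3$ vertices must be a single $m$-cycle, after relabelling $V_l = [\tau_l \tau_{l+1}]$ cyclically; consequently $\mu_{\tau_i}(X) = e_{V_{i-1}} e_{V_i}$ and $[X : \R(X)] = \prod_l e_{V_l}$.

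The main obstacle --- and the heart of the argument --- is the realization $X = W_1' \oplus \cdots \oplus W_{m-1}' \oplus C_1 \oplus C_2$ of $(2^{m-1},1,1)$. For each rank-$2$ summand $W_i' = [\tau_a \tau_b]$, the regulator index $e_{W_i'}$ divides both $\mu_{\tau_a}(X)$ and $\mu_{\tau_b}(X)$, hence their gcd; by pairwise coprimality of the $e_{V_l}$ this gcd is either $1$ (when $\tau_a, \tau_b$ are cycle-non-adjacent) or $e_{V_{l_i}}$ for the unique cycle edge $V_{l_i} = \{\tau_a, \tau_b\}$ (when cycle-adjacent). Since $W_i'$ is clipped indecomposable of rank $2$, $e_{W_i'} > 1$ forces cycle-adjacency and $e_{W_i'} \mid e_{V_{l_i}}$. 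Finally, because $e_{C_j} = 1$ and all summand regulator indices are pairwise coprime, $\prod_i e_{W_i'} = [X : \R(X)] = \prod_l e_{V_l}$, so the primes of each $e_{V_l}$ must be partitioned among those $W_i'$ with $l_i = l$. Since every $e_{V_l} > 1$, each of the $m$ cycle edges demands at least one corresponding $W_i'$; but there are only $m - 1$ such summands, yielding the contradiction $m - 1 \geq m$.
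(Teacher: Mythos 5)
Your proof is correct, and while it follows the paper's skeleton up through the cycle structure, the final contradiction is obtained by a genuinely different and cleaner route. Like the paper, you use the four partitions $(m,1^m)$, $(m,m)$, $(2^m)$, $(2^{m-1},1,1)$, reduce to $X=[\tau_1\cdots\tau_m]\oplus[\tau_1]\oplus\cdots\oplus[\tau_m]$ with $\rk R_\tau=2$ for every $\tau$, and conclude that the $(2^m)$ decomposition is an $m$--cycle with pairwise coprime indices $e_{V_1},\ldots,e_{V_m}$, all $>1$ (the paper gets connectivity by excluding fully invariant summands directly, you get it via the frame of the rank--$m$ summand; same outcome). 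The divergence is in the treatment of $(2^{m-1},1,1)$: the paper first proves that this decomposition must be a path $[\tau_i]\oplus[\tau_j]\oplus[\tau_i\,\sigma_1;e_1]\oplus\cdots\oplus[\sigma_{m-2}\,\tau_j;e_{m-1}]$, computes its $\mu$--invariants, and derives a contradiction by a divisibility chase comparing $e_1=p_ip_{i-1}$ and $\mu_{\sigma_1}$, split into the cases $\sigma_1=\tau_k$ with $k\geq 2$ and $k=1$. You bypass the shape of that decomposition entirely: each clipped rank--$2$ summand has index $>1$ dividing $\gcd(\mu_{\tau_a}(X),\mu_{\tau_b}(X))$, hence dividing a single cycle--edge index $e_{V_{l_i}}$, and since the product of the $m-1$ pairwise coprime indices must equal $\prod_l e_{V_l}$ with every $e_{V_l}>1$, the assignment $i\mapsto l_i$ would have to hit all $m$ cycle edges --- a pigeonhole contradiction. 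This avoids the case analysis and makes transparent the real obstruction, namely that $m-1$ rank--two pieces cannot account for $m$ coprime cycle indices. One minor citation slip: the pairwise coprimality of the regulator indices of the summands in a given decomposition comes from the Product Formula (\thmref{product formula}), not from \lemref{direct sums in B}, which is the converse construction; this does not affect the validity of the argument.
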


\begin{proof} Suppose $X \in \calB$ realizes $\calS(2m,m)$ for $m \geq
  3$. Note that $\calS(2m,m)$ contains the partitions $(m,1^m)$ and
  $(m,m)$.  Then $X$ cannot have completely decomposable fully
  invariant summands (\lemref{completely decomposable tau homogeneous
    fully invariant summand}) and cannot be the direct sum of fully
  invariant subgroups (\lemref{no realization}). \lemref{pinning down
    critical types} implies that $\Tc(X) = \{\tau_1, \ldots, \tau_m\}$,
  $X = [\tau_1 \, \cdots \, \tau_m] \oplus [\tau_1] \oplus \cdots
  [\tau_m]$, and by \lemref{bound for ranks} $\forall\, \tau \in
  \Tc(X): \rk(X(\tau)) \leq 2$. The group $X$ also realizes $(2^m)$.
  We claim that, without loss of generality,
\begin{equation}\label{realization 2 hoch m}
  X = [\tau_1 \ \tau_2; p_1] \oplus [\tau_2 \ \tau_3; p_2] \oplus
  \cdots \oplus [\tau_{m-1} \ \tau_m; p_{m-1}] 
  \oplus [\tau_m \ \tau_1; p_m]
\end{equation}
  where
\begin{comment}
\begin{eqnarray*}%\label{}
  [\tau_1 \ \tau_2] &=& (\tau_1 w_1 \oplus \tau_2 v_2) +
  \frac{\Z}{p_1}(w_1 + v_2), \\
  &\vdots& \\
  [\tau_i \ \tau_{i+1}] &=& (\tau_i w_i \oplus \tau_{i+1} v_{i+1}) +
  \frac{\Z}{p_i}(w_i + v_{i+1}, \\ 
  &\vdots& \\
  [\tau_m \ \tau_1] &=& (\tau_m w_m \oplus \tau_1 v_1) +
  \frac{\Z}{p_m}(w_m + v_1). 
\end{eqnarray*}
\end{comment}

\begin{eqnarray}\label{}
  [\tau_1 \ \tau_2] &=& (\tau_1 w_1 \oplus \tau_2 v_2) +
  \frac{\Z}{p_1}(w_1 + v_2), \nonumber \\
  &\vdots& \nonumber \\
  {[\tau_i \ \tau_{i+1}]} &=& (\tau_i w_i \oplus \tau_{i+1} v_{i+1}) +
  \frac{\Z}{p_i}(w_i + v_{i+1}), \label{rank 2 summands} \\
  &\vdots& \nonumber \\
  {[\tau_m \ \tau_1]} &=& (\tau_m w_m \oplus \tau_1 v_1) +
  \frac{\Z}{p_m}(w_m + v_1). \nonumber
\end{eqnarray}

In fact, the first rank--$2$ summand of $X$ may be assumed to be
$[\tau_1\ \tau_2; p_1]$. The second rank--$1$ summand of $\R(X)$ of
type $\tau_2$ appears in a summand $[\tau_2\ \tau; p_2]$. The type
$\tau$ cannot be $\tau_1$ because this would create a decomposition
with fully invariant summands. So without loss of generality $\tau =
\tau_3$.  Continuing in this fashion we obtain the claimed
decomposition \eqref{realization 2 hoch m}. The summands may be
written in the form \eqref{rank 2 summands} by \lemref{integral
  coordinates}. Thus we have the following invariants $\mu_i =
\mu_i(X)$.
\begin{equation*}%\label{invariants}
  \mu_1 = p_1 p_m,\ \mu_2 = p_2 p_1,\ \ldots,\ \mu_{m-1} = p_{m-1}
  p_{m-2}, \ \mu_m = p_m p_{m-1}.
\end{equation*}
The group $X$ also realizes $(2^{m-1},1,1)$ and hence may be assumed
to be of the form 
\begin{equation}\label{realization of 2 hoch m-1,1,1}
  X = [\tau_i] \oplus [\tau_j] \oplus [\tau_i\ \sigma_1;e_1] \oplus
  [\sigma_1\ \sigma_2; e_2] \oplus \cdots \oplus [\sigma_{m-3}\
  \sigma_{m-2}; e_{m-2}] \oplus [\sigma_{m-1}\ \tau_j;e_{m-1}]
\end{equation}
where without loss of generality $i < j$. Again assume the
presentation of \lemref{integral coordinates} and obtain the invariants 
\begin{equation*}%\label{invariants}
  \mu_i = e_1,\quad \mu_j = e_{m-1},\quad \mu_{\sigma_s}(X) = e_s e_{s+1}
  \text{ otherwise.} 
\end{equation*}
We obtain in particular that 
  \[\textstyle
  \mu_i = e_1 = p_i p_{i-1}, \quad \mu_j = e_{m-1} = p_j p_{j-1}.
  \]
  Suppose that $\sigma_1 = \tau_k$, so $k \ne i$. Assume first that $k
  \ge 2$.  Then $\mu_k = p_k p_{k-1} = e_1 e_2 = p_i p_{i-1} e_2$. So
  $p_i$ divides $p_k p_{k-1}$ and it follows that $p_i = p_{k-1}$, so
  $i=k-1$. Hence $p_k p_{k-1} = p_{k-1} p_{k-2} e_2$, and $p_{k-2}
  \mid p_k$, a contradiction. This leaves the case $k=1$. In this case
  $\mu_1 = p_1 p_{m} = e_1 e_2 = p_i p_{i-1} e_2$. So $p_i$ divides
  $p_1 p_{m}$ and it follows that $p_i = p_m$ and $p_{i-1} = p_1$, so
  $i=2$. Hence $p_1 p_{m} = p_2 p_{1} e_2$, and $p_{2} \mid p_m$,
  again a contradiction as $m \geq 3$.
\end{proof} 

\begin{cor}\label{fail for n >= 2m m >= 3} There is no $\calB$--group
  realizing $\calS(n,m)$ for $m \geq 3$ and $n \geq 2m$.
\end{cor}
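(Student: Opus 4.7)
The plan is to combine the two immediately preceding results in a single line. Proposition \ref{not realizable bunch} supplies the base case $n = 2m$, namely that no $\calB$--group realizes $\calS(2m,m)$ whenever $m \geq 3$. Proposition \ref{getting worse}, read in its contrapositive form, propagates non-realizability upward in $n$: if $\calS(n_0,k)$ cannot be realized and $n \ge n_0 \ge k$, then $\calS(n,k)$ cannot be realized either.

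Concretely, I would fix $m \geq 3$ and $n \geq 2m$, set $k := m$ and $n_0 := 2m$, and verify the hypotheses of \propref{getting worse} in this form: $n \geq n_0 = 2m \geq m = k$. The base case from \propref{not realizable bunch} gives non-realizability of $\calS(2m,m) = \calS(n_0,k)$; the contrapositive of \propref{getting worse} then yields non-realizability of $\calS(n,m)$, which is the claim.

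There is essentially no obstacle: the corollary is a packaging of the two preceding results, and the only thing to check is that the indices satisfy the modest comparability hypotheses of \propref{getting worse}, which they do automatically from $n \geq 2m$ and $m \geq 3$.
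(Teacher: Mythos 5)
Your proposal is correct and is exactly the paper's argument: the paper's proof of this corollary consists of the two citations \propref{not realizable bunch} and \propref{getting worse}, combined precisely as you describe (base case $n=2m$, then upward propagation via the contrapositive). Nothing further is needed.
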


\begin{proof} \propref{not realizable bunch} and \propref{getting
    worse}.
\end{proof} 

\begin{ex}\label{S106fail}\rm The hooked family $\calS(10,6)$ cannot be
  realized by $\calB$--groups.
\end{ex}

\begin{proof} For easy reference
{\small
  \[\textstyle
  \calS(10,6) = 
  \] 
  \[
  \{(2,2,2,2,2), (3,2,2,2,1), (3,3,2,1,1), (4,2,2,1,1), (4,3,1,1,1),
  (5,2,1,1,1), (6,1,1,1,1),
  \] 
  \[\textstyle
  (3,3,3,1), (3,3,2,2), (4,2,2,2), (4,3,2,1), (4,4,1,1), (5,2,2,1),
  (5,3,1,1), (6,2,1,1),
  \] 
  \[\textstyle
  (4,4,2), (4,3,3), (5,4,1), (5,3,2), (6,2,2), (6,3,1), (5,5), (6,4)\}
  \] }

  Suppose that $X$ realizes $(6,1,1,1,1)$ and $(6,4)$.  Then $\Tc(X) =
  \{\tau_1, \tau_2, \tau_3, \tau_4, \tau_5, \tau_6\}$ and $X =
  [\tau_1\, \tau_2\, \tau_3\, \tau_4\, \tau_5\, \tau_6] \oplus
  [\tau_1] \oplus [\tau_2] \oplus [\tau_3] \oplus [\tau_4]$ by
  \lemref{pinning down critical types}. Note that the types $\tau_1,
  \ldots, \tau_4$ are interchangeable, and $\tau_5, \tau_6$ are
  interchangeable. Assume further that $X$ realizes $(2,2,2,2,2)$. We
  claim that after renaming types if necessary we may assume that $X =
  [\tau_1 \, \tau_2] \oplus [\tau_2 \, \tau_3] \oplus [\tau_3 \,
  \tau_4] \oplus [\tau_4 \, \tau_5] \oplus [\tau_1 \, \tau_6]$. In
  fact, $\tau_5$ and $\tau_6$ cannot belong to the same summand and
  must be paired with different types to avoid fully invariant
  summands, so without loss of generality $X = [\tau_4 \, \tau_5]
  \oplus [\tau_1 \, \tau_6] \oplus \cdots$. The remaining three
  summands must contain $\tau_1$ and $\tau_4$ once each and $\tau_2,
  \tau_3$ twice each. With proper labeling we get $X = [\tau_1 \,
  \tau_2] \oplus [\tau_2 \, \tau_3] \oplus [\tau_3 \, \tau_4] \oplus
  [\tau_4 \, \tau_5] \oplus [\tau_1 \, \tau_6]$. In order to compute
  invariants we need the explicit description of $X$. By
  \lemref{integral coordinates} we may assume that
\begin{eqnarray*}%\label{}
  X &=& 
  \left[(\tau_1 v_1 \oplus \tau_2 v_2) + \frac{\Z}{p_1}(v_1 +
    v_2)\right] 
  \oplus \left[(\tau_2 w_2 \oplus \tau_3 v_3) + \frac{\Z}{p_2}(w_2 +
    v_3)\right] \\
  &\oplus& \left[(\tau_3 w_3 \oplus \tau_4 v_4) + \frac{\Z}{p_3}(w_3 +
    v_4)\right] 
  \oplus \left[(\tau_4 w_4 \oplus \tau_5 v_5) + \frac{\Z}{p_4}(w_4 +
    v_5)\right] \\
  &\oplus& \left[(\tau_1 w_1 \oplus \tau_6 v_6) + \frac{\Z}{p_5}(w_1 +
    v_6)\right],
\end{eqnarray*}
  where $\tau_1, \tau_2$ are $p_1$--free, $\tau_2, \tau_3$ are
  $p_2$--free, $\tau_3, \tau_4$ are $p_3$--free, $\tau_4, \tau_5$ are
  $p_4$--free, $\tau_1, \tau_6$ are $p_5$--free, and the $p_i$ are
  pairwise relatively prime integers.
  The invariants $\mu_i = \mu_i(X)$ are easily read off. 
  \[\textstyle
  \mu_1 = p_1 p_5, \quad \mu_2 = p_1 p_2, \quad \mu_3 = p_2 p_3, \quad
  \mu_4 = p_3 p_4, \quad \mu_5 = p_4, \quad \mu_6 = p_5.
  \]

  The partition $(4,4,1,1)$ cannot be realized by $X$. By way of
  contradiction suppose $X$ realizes $(4,4,1,1)$. The exceptional
  types $\tau_5$ and $\tau_6$ cannot be the types of the rank--$1$
  summands because they would be fully invariant. So $\tau_5, \tau_6$
  appear together in one of the rank--$4$ summands or they are
  distributed over the rank--$4$ summands. We have to consider two
  cases.

{\bf Case I.} $X = [\tau_1 \, \tau_2 \, \tau_3 \, \tau_4] \oplus
[\tau_1 \, \tau_2 \, \tau_5 \, \tau_6] \oplus [\tau_3] \oplus
[\tau_4]$

In this case $X = X_1 \oplus X_2 \oplus [\tau_3] \oplus [\tau_4]$
where 
  \[\textstyle
  X_1 = [\tau_1 v_1 \oplus \tau_2 v_2 \oplus \tau_3 v_3 \oplus \tau_4
  v_4] + \frac{\Z}{e_1} (\alpha_1 v_1 + \alpha_2 v_2 + \alpha_3 v_3 +
  \alpha_4 v_4) 
  \] 
  and 
  \[\textstyle
  X_2 = [\tau_1 w_1 \oplus \tau_2 w_2 \oplus \tau_5 v_5 \oplus \tau_6
  v_6] + \frac{\Z}{e_2} 
  (\beta_1 w_1 + \beta_2 w_2 + \beta_3 v_5 + \beta_4 v_6)
  \]
  We now see that $\mu_4 = p_3 p_4 = \frac{e_1}{\alpha_4}$ and
  $\mu_5 = p_4 = \frac{e_2}{\beta_3}$. This means that $p_4$ is a
  factor both of $e_1$ and $e_2$ contradicting $\gcd(e_1,e_2) = 1$. 

  {\bf Case II.} $X = [\tau_1 \, \tau_2 \, \tau_3 \, \tau_5] \oplus
  [\tau_1 \, \tau_2 \, \tau_4 \, \tau_6] \oplus [\tau_3] \oplus
  [\tau_4]$

In this case $X = X_1 \oplus X_2 \oplus [\tau_3] \oplus [\tau_4]$
where 
  \[\textstyle
  X_1 = 
  [\tau_1 v_1 \oplus \tau_2 v_2 \oplus \tau_3 v_3 \oplus \tau_5 v_5] 
  + \frac{\Z}{e_1}  
  (\alpha_1 v_1 + \alpha_2 v_2 + \alpha_3 v_3 + \alpha_5 v_5)
  \] 
  and 
  \[\textstyle
  X_2 = 
  [\tau_1 w_1 \oplus \tau_2 w_2 \oplus \tau_4 w_4 \oplus \tau_6 w_6] 
  + \frac{\Z}{e_2}  
  (\beta_1 w_1 + \beta_2 w_2 + \beta_4 w_4 + \beta_6 v_6)
  \]
  We now see that $\mu_3 = p_2 p_3 = \frac{e_1}{\alpha_3}$ and
  $\mu_4 = p_3 p_4 = \frac{e_1}{\beta_4}$. This means that
  $p_3$ is a factor both of $e_1$ and $e_2$, a contradiction.
\end{proof}

\begin{cor} \label{S(n,6)fail} $\calS(n,6)$ cannot be realized in
  $\calB$ for $n \geq 10$. 
\end{cor}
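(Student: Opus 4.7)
The plan is to deduce the corollary directly from two results already in the excerpt, namely \exref{S106fail} (which establishes the base case $n=10$) and \propref{getting worse} (which propagates non-realizability upward in $n$ for fixed $k$). So this is essentially a one-line argument with no new content required.

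Concretely, I would proceed as follows. First, by \exref{S106fail}, the family $\calS(10,6)$ is not realized by any $\calB$--group. Next, fix any integer $n \geq 10$ and apply the contrapositive form of \propref{getting worse} with $k = 6$: since $\calS(10,6)$ cannot be realized and $n \geq 10$, the proposition tells us that $\calS(n,6)$ cannot be realized either. Since $n$ was arbitrary with $n \geq 10$, this completes the proof.

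The only thing to check is that the hypothesis $n \geq k$ of \propref{getting worse} is satisfied, which is immediate here since $n \geq 10 > 6 = k$. There is no genuine obstacle, as the heavy lifting (the case analysis using invariants and the decompositions into rank--$2$ cycles) was already carried out in the proof of \exref{S106fail}; the present corollary is a straightforward monotonicity consequence.
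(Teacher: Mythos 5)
Your proof is correct and is exactly the argument the paper intends: the base case is \exref{S106fail} and the upward propagation in $n$ is \propref{getting worse}, precisely the same two-step pattern the paper uses explicitly for \corref{fail for n >= 2m m >= 3}. Nothing further is needed.
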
 

\begin{ex}\label{S74fail}\rm The hooked family $\calS(7,4)$ cannot be
  realized in $\calB$. 
\end{ex} 

\begin{proof} For easy reference 
  \[\textstyle
  \calS(7,4) = \{(2,2,2,1),\, (3,2,1,1), \,
  (4,1,1,1), \, (4,2,1),\, (3,3,1),\, (3,2,2),\, (4,3)\}.
  \]
  Suppose that $X \in \calB$ realizes $\calS(7,4)$. Then $X = [\tau_1
  \, \tau_2 \, \tau_3 \, \tau_4] \oplus [\tau_1] \oplus [\tau_2]
  \oplus [\tau_3]$ (\lemref{pinning down critical types}). So there is
  the exceptional critical type $\tau_4$ that appears only once in
  decompositions. By assumption $X$ realizes $(2,2,2,1)$. The
  rank--$1$ summand cannot have type $\tau_4$ because, if so, it would
  be a fully invariant summand. Hence without loss of generality $X =
  [\tau_4 \, \tau_1] \oplus [\tau_1 \, \tau_2] \oplus [\tau_2 \,
  \tau_3] \oplus [\tau_3]$. Explicitly we have
\begin{eqnarray*}%\label{}
  X &=& X_1 \oplus X_2 \oplus X_3 \oplus \tau_3 w_3 \quad \text{where} \\
  X_1 &=& [\tau_4 v_4 \oplus \tau_1 v_1] + \frac{\Z}{p_1}(v_4 + v_1), \\
  X_2 &=& [\tau_1 w_1 \oplus \tau_2 v_2] + \frac{\Z}{p_2}(w_1 + v_2), \\
  X_3 &=& [\tau_2 w_2 \oplus \tau_3 v_3] + \frac{\Z}{p_3}(w_2 + v_3),
\end{eqnarray*}
  and $\tau_1$ is $p_1 p_2$--free, $\tau_2$ is $p_2 p_3$--free,
  $\tau_3$ is $p_3$--free, and $\tau_4$ is $p_1$--free. We can now
  compute the invariants $\mu_i = \mu_{\tau_i}(X)$. 
  \[\textstyle
  \mu_1 = p_1 p_2, \quad \mu_2 = p_2 p_3, \quad \mu_3 = p_3, \quad
  \mu_4 = p_1.
  \]
  Our group $X$ also realizes $(3,2,2)$. There are two cases depending
  on whether the exceptional type $\tau_4$ belongs to the summand of
  rank $3$ or not.

  {\bf Case I.} $X = [\tau_4 \, \tau_1 \, \tau_2] \oplus [\tau_2
  \tau_3] \oplus [\tau_3 \, \tau_1]$ and, explicitly,
\begin{eqnarray*}%\label{}
  X &=& X_1 \oplus X_2 \oplus X_3 \quad \text{where} \\
  X_1 &=& [\tau_4 v_4 \oplus \tau_1 v_1 \oplus \tau_2 v_2] 
  + \frac{\Z}{e_1}(\alpha_4 v_4 + \alpha_1 v_1 + \alpha_2 v_2), \\
  X_2 &=& [\tau_2 w_2 \oplus \tau_3 v_3] + \frac{\Z}{e_2}(w_2 + v_3), \\
  X_3 &=& [\tau_3 w_3 \oplus \tau_1 w_1] + \frac{\Z}{e_3}(w_3 + w_1),
\end{eqnarray*}
  where $\tau_2$ is $e_2$--free, $\tau_3$ is $e_2 e_3$--free, and
  $\tau_1$ is $e_3$--free. By way of invariants we obtain that 
  \[\textstyle
  \mu_2 = p_2 p_3 = \frac{e_1}{\alpha_2} e_2, \ 
  \mu_3 = p_3 = e_2 e_3.
  \]
  It follows that $\alpha_2 p_2 e_2 e_3 = \alpha_2 p_2 p_3 = e_1
  e_2$ and hence $e_3 \mid e_1$, a contradiction.
\smallskip

  {\bf Case II.} $X = [\tau_1 \, \tau_2 \, \tau_3] \oplus [\tau_3
  \tau_4] \oplus [\tau_1 \, \tau_2]$ and, explicitly,
\begin{eqnarray*}%\label{}
  X &=& X_1 \oplus X_2 \oplus X_3 \quad \text{where} \\
  X_1 &=& [\tau_1 v_1 \oplus \tau_2 v_2 \oplus \tau_3 v_3] 
  + \frac{\Z}{e_1}(\alpha_1 v_1 + \alpha_2 v_2 + \alpha_3 v_3), \\
  X_2 &=& [\tau_3 w_3 \oplus \tau_4 v_4] + \frac{\Z}{e_2}(w_3 + v_4), \\
  X_3 &=& [\tau_1 w_1 \oplus \tau_2 w_2] + \frac{\Z}{e_3}(w_1 + w_2),
\end{eqnarray*}
where $\tau_1, \tau_4$ are $e_2$--free, and $\tau_1, \tau_2$ are
$e_3$--free. By way of invariants we obtain that
  \[\textstyle
  \mu_1 = p_1 p_2 = \frac{e_1}{\alpha_1} e_3, \ 
%  \mu_2 = p_2 p_3 = \frac{e_1}{\alpha_2} e_3, \ 
%  \mu_3 = p_3 = \frac{e_1}{\alpha_3} e_2, \ 
  \mu_4 = p_1 = e_2.
  \]
  Hence $e_2 p_2 = \frac{e_1}{\alpha_1} e_3$, an immediate
  contradiction as $e_1, e_2, e_3$ are pairwise relatively prime.
\end{proof} 

\begin{ex}\label{S75fail}\rm The hooked family $\calS(7,5)$ cannot be
  realized in $\calB$. Consequently, neither can $\calS(8,5)$ and
  $\calS(9,5)$.
\end{ex} 

\begin{proof} For easy reference 
  \[\textstyle
  \calS(7,5) = 
  \{(3,2,2),\, (3,3,1),\, (4,2,1),\, (5,1,1),\, (4,3),\, (5,2)
  \}.
  \]
  Suppose that $X \in \calB$ realizes $\calS(7,5)$. Then $X = [\tau_1
  \, \tau_2 \, \tau_3 \, \tau_4 \, \tau_5] \oplus [\tau_1] \oplus
  [\tau_2]$ (\lemref{pinning down critical types}). So there are the
  exceptional critical types $\tau_3, \tau_4, \tau_5$ that appear only
  once in decompositions. By assumption $X$ realizes $(3,2,2)$. Each
  rank--$2$ summand must contain exactly one of $\tau_1$ and $\tau_2$
  in order to avoid a fully invariant summand. Hence without loss of
  generality $X = [\tau_1 \, \tau_2 \, \tau_3] \oplus [\tau_1 \,
  \tau_4] \oplus [\tau_2 \, \tau_5]$. Explicitly we have
\begin{eqnarray*}%\label{}
  X &=& X_1 \oplus X_2 \oplus X_3 \quad
  \text{where} \\ 
  X_1 &=& [\tau_1 v_1 \oplus \tau_2 v_2 \oplus \tau_3 v_3] +
  \frac{\Z}{p_1}(\alpha_1 v_1 + \alpha_2 v_2 + \alpha_3 v_3), \\
  X_2 &=& [\tau_1 w_1 \oplus \tau_4 v_4] + \frac{\Z}{p_2}(w_1 + v_4), \\
  X_3 &=& [\tau_2 w_2 \oplus \tau_5 v_5] + \frac{\Z}{p_3}(w_2 + v_5),
\end{eqnarray*}
where $\tau_1, \tau_4$ are $p_2$--free, and $\tau_2, \tau_5$ are
$p_3$--free. We can now compute the invariants $\mu_i =
\mu_{\tau_i}(X)$. 
  \[\textstyle
  \mu_1 = \frac{p_1}{\alpha_1} p_2, \quad 
  \mu_2 = \frac{p_1}{\alpha_2} p_3, \quad 
  \mu_3 = \frac{p_1}{\alpha_3}, \quad 
  \mu_4 = p_2, \quad 
  \mu_5 = p_3.
  \]
  It is easily seen that, in addition to $(3,2,2)$, our $X$ also
  realizes $(4,2,1)$ and $(5,1,1)$, but it does not realize $(3,3,1)$
  as we shall see, so not all of $\calC(7,3)$ is realized. Suppose
  that $X$ realizes $(3,3,1)$. So $X = Y_1 \oplus Y_2 \oplus Y_3$ with
  $Y_3 = \sigma w$ having rank $1$. Then $\sigma$ cannot be one of the
  exceptional type $\tau_3, \tau_4, \tau_5$ to avoid a fully invariant
  summand of rank $1$. So without loss of generality $X = [\tau_1 \,
  \tau_2 \, \tau_3] \oplus [\tau_2 \, \tau_4 \, \tau_5] \oplus \tau_1
  w_1$. Explicitly, we have
\begin{eqnarray*}%\label{}
  X &=& Y_1 \oplus Y_2 \oplus Y_3 \quad\text{where} \\
  Y_1 &=& [\tau_1 v_1 \oplus \tau_2 v_2 \oplus \tau_3 v_3] 
  + \frac{\Z}{q_1}(\beta_1 v_1 + \beta_2 v_2 + \beta_3 v_3), \\
  Y_2 &=& [\tau_2 w_2 \oplus \tau_4 v_4 \oplus \tau_5 v_5] +
  \frac{\Z}{q_2}(\gamma_2 w_2 + \gamma_4 v_4 + \gamma_5 v_5).
\end{eqnarray*}
  By way of invariants we obtain that 
  \[\textstyle
  \mu_1 = \frac{p_1}{\alpha_1} p_2 = \frac{q_1}{\beta_1}, \ 
%  \mu_2 = \frac{p_1}{\alpha_1} p_3 = \frac{q_1}{\beta_2} \frac{q_2}{\gamma_2}, \ 
%  \mu_3 = \frac{p_1}{\alpha_3} = \frac{q_1}{\beta_3}, \ 
  \mu_4 = p_2 = \frac{q_2}{\gamma_4}, \  
  \mu_5 = p_3 = \frac{q_2}{\gamma_5}.
  \]
  From the last two statements show that $p_2 \mid q_2$ and $p_3 \mid
  q_2$, and hence $q_2 = p_2 p_3 p_1'$ for some $p_1'$. Using that
  $[X:R(X)] = p_1 p_2 p_3 = q_1 q_2$ it follows that $p_1 = q_1 p_1'$.
  From the equation for $\mu_1$, it now follows that $\beta_1 q_1 p_1' p_2
  = \alpha_1 q_1$, so $\beta_1 p_1' p_2 = \alpha_1$. We arrive at the
  contradiction $p_2 \mid \alpha_1 \mid p_1$.
\end{proof} 

\begin{ex}\label{S86fail}\rm The hooked families $\calS(8,6)$ and
  $\calS(9,6)$ cannot be realized in $\calB$.
\end{ex} 

\begin{proof} For easy reference 
  \[\textstyle
  \calS(8,6) = 
  \{(6,1,1),\, (5,2,1),\, (4,3,1),\, (4,2,2),\, (3,3,2),\,  (6,2),\,
  (5,3) ,\, (4,4)\}.
  \]
  Suppose that $X \in \calB$ realizes $\calS(8,6)$. Then $X = [\tau_1
  \, \tau_2 \, \tau_3 \, \tau_4 \, \tau_5 \, \tau_6] \oplus [\tau_1]
  \oplus [\tau_2]$. The critical types $\tau_3, \ldots, \tau_6$ 
  appear only once in decompositions.

  By assumption $X$ realizes $(4,2,2)$. It then also realizes
  $(6,1,1), (5,2,1), (4,3,1)$ Either one of $\tau_1, \tau_2$ appears
  in the rank-$4$ summand, or both do.  We have two cases:
\begin{enumerate} 
\item {\bf Case I.} $X = [\tau_1 \, \tau_3 \, \tau_4 \, \tau_5] \oplus
  [\tau_1 \, \tau_2] \oplus [\tau_2 \, \tau_6]$ and
\item {\bf Case II.} $X = [\tau_1 \, \tau_2 \, \tau_3 \, \tau_4] \oplus
  [\tau_1 \, \tau_5] \oplus [\tau_2 \, \tau_6]$.
\end{enumerate} 

{\bf Case I.} Explicitly we have
\begin{eqnarray*}%\label{}
  X &=& X_1 \oplus X_2 \oplus X_3 \quad \text{where} \\ 
  X_1 &=& [\tau_1 v_1 \oplus \tau_3 v_3 \oplus \tau_4 v_4 \oplus
  \tau_5 v_5] + \frac{\Z}{p_1}(\alpha_1 v_1 + \alpha_3 v_3 + \alpha_4
  v_4 + \alpha_5 v_5), \\  
  X_2 &=& [\tau_1 w_1 \oplus \tau_2 v_2] + \frac{\Z}{p_2}(w_1 + v_2), \\
  X_3 &=& [\tau_2 w_2 \oplus \tau_6 v_6] + \frac{\Z}{p_3}(w_2 + v_6),
\end{eqnarray*}
  $\tau_1, \tau_2$ are $p_2$--free, and $\tau_2, \tau_6$ are $p_3$--free.
  We can now compute the invariants $\mu_i = \mu_{\tau_i}(X)$. 
  \[\textstyle
  \mu_1 = \frac{p_1}{\alpha_1} p_2, \quad 
  \mu_2 = p_2 p_3, \quad 
  \mu_3 = \frac{p_1}{\alpha_3}, \quad
  \mu_4 = \frac{p_1}{\alpha_4}, \quad 
  \mu_5 = \frac{p_1}{\alpha_5}, \quad
  \mu_6 = p_3.
  \]

{\bf Case II.} Explicitly we have
\begin{eqnarray*}%\label{}
  X &=& X_1 \oplus X_2 \oplus X_3 \quad \text{where} \\ 
  X_1 &=& [\tau_1 v_1 \oplus \tau_2 v_2 \oplus \tau_3 v_3 \oplus
  \tau_4 v_4] + \frac{\Z}{p_1}(\alpha_1 v_1 + \alpha_2 v_2 + \alpha_3
  v_3 + \alpha_4 v_4), \\  
  X_2 &=& [\tau_1 w_1 \oplus \tau_5 v_5] + \frac{\Z}{p_2}(w_1 + v_5), \\
  X_3 &=& [\tau_2 w_2 \oplus \tau_6 v_6] + \frac{\Z}{p_3}(w_2 + v_6),
\end{eqnarray*}
  and $\tau_1, \tau_5$ are $p_2$--free, $\tau_2, \tau_6$ are $p_3$--free.
  We can now compute the invariants $\mu_i = \mu_{\tau_i}(X)$. 
  \[\textstyle
  \mu_1 = \frac{p_1}{\alpha_1} p_2, \quad 
  \mu_2 = \frac{p_1}{\alpha_2} p_3, \quad 
  \mu_3 = \frac{p_1}{\alpha_3}, \quad
  \mu_4 = \frac{p_1}{\alpha_4}, \quad 
  \mu_5 = p_2, \quad
  \mu_6 = p_3.
  \]

  Our group $X$ also realizes $(3,3,2)$. There are two cases depending
  on whether the types $\tau_1, \tau_2$ belongs to different summands
  of rank $3$ or not.

\begin{enumerate} 
\item {\bf Case A.} $X = [\tau_1 \, \tau_3 \, \tau_4] \oplus [\tau_2 \,
  \tau_5 \, \tau_6] \oplus [\tau_1 \, \tau_2]$
\item {\bf Case B.} $X = [\tau_1 \, \tau_2 \, \tau_3] \oplus [\tau_2 \, 
  \tau_4 \, \tau_5] \oplus [\tau_1 \, \tau_6]$ 
\end{enumerate} 

  {\bf Case A.} Explicitly, 
\begin{eqnarray*}%\label{}
  X &=& X_1 \oplus X_2 \oplus X_3 \quad \text{where} \\
  X_1 &=& [\tau_1 v_1 \oplus \tau_3 v_3 \oplus \tau_4 v_4] 
  + \frac{\Z}{q_1}(\beta_1 v_1 + \beta_3 v_3 + \beta_4 v_4), \\
  X_2 &=& [\tau_2 v_2 \oplus \tau_5 v_5 \oplus \tau_6 v_6] +
  \frac{\Z}{q_2}(\gamma_2 v_2 + \gamma_5 v_5 + \gamma_6 v_6), \\ 
  X_3 &=& [\tau_1 w_1 \oplus \tau_2 w_2] + \frac{\Z}{q_3}(w_1 + w_2)
\end{eqnarray*}
  where $\tau_1, \tau_2$ are $q_3$--free. The invariants are
  \[\textstyle
  \mu_1 = \frac{q_1}{\beta_1} q_3, \quad
  \mu_2 = \frac{q_2}{\gamma_2} q_3, \quad
  \mu_3 = \frac{q_1}{\beta_3}, \quad 
  \mu_4 = \frac{q_1}{\beta_4}, \quad 
  \mu_5 = \frac{q_2}{\gamma_5}, \quad
  \mu_6 = \frac{q_2}{\gamma_6}.
  \]
\bigskip

  {\bf Case B.} Explicitly, 
\begin{eqnarray*}%\label{}
  X &=& X_1 \oplus X_2 \oplus X_3 \quad \text{where} \\
  X_1 &=& [\tau_1 v_1 \oplus \tau_2 v_2 \oplus \tau_3 v_3 \oplus ] 
  + \frac{\Z}{q_1}(\beta_1 v_1 + \beta_2 v_2 + \beta_3 v_3), \\
  X_2 &=& [\tau_2 v_2 \oplus \tau_4 v_4 \oplus \tau_5 v_5] 
  + \frac{\Z}{q_2}(\gamma_2 w_2 + \gamma_4 v_4 + \gamma_5 v_5), \\
  X_3 &=& [\tau_1 w_1 \oplus \tau_6 v_6] + \frac{\Z}{q_3}(w_1 + v_6),
\end{eqnarray*}
where $\tau_6, \tau_1$ are $q_2$--free. The invariants are
  \[\textstyle
  \mu_1 = \frac{q_1}{\beta_1} q_3, \ 
  \mu_2 = \frac{q_1}{\beta_2} \frac{q_2}{\gamma_2}, \ 
  \mu_3 = \frac{q_1}{\beta_3}, \ 
  \mu_4 = \frac{q_2}{\gamma_4}, \
  \mu_5 = \frac{q_2}{\gamma_5}, \ 
  \mu_6 = q_3.
  \]

{\bf Case I.A.} The relevant invariants are
  \[\textstyle
  \mu_2 = p_2 p_3 = \frac{q_2}{\gamma_2} q_3, \quad \mu_5 =
  \frac{p_1}{\alpha_5} = \frac{q_2}{\gamma_5}, \quad
  \mu_6 = p_3 = \frac{q_2}{\gamma_6}.
  \] 

The equation for $\mu_6$ implies $q_2 = p_3 \gamma_6$. Together with
the equation for $\mu_5$ we get $\frac{p_1}{\alpha_5} = \frac{p_3
    \gamma_6}{\gamma_5}$, further $\frac{p_1}{\alpha_5} \gamma_5 = p_3
  \gamma_6$, so $\gamma_6 = \frac{p_1}{\alpha_5} \gamma_6'$ for some
  $\gamma_6'$. Next we get $q_2 = p_3\frac{p_1}{\alpha_5}
  \gamma_6'$. Using the equation for $\mu_2$ we conclude that
  $\gamma_2 p_2 = \frac{p_1}{\alpha_5} \gamma_6' q_3$. Hence $\gamma_2
  =  \frac{p_1}{\alpha_5} \gamma_2'$ for some $\gamma_2'$. Finally,
  $1 \neq \frac{p_1}{\alpha_5} \mid \gcd(\gamma_2,\gamma_6)$ which
  contradicts the Regulator Criterion for rank--$3$ groups. 
%\vfill\pagebreak

{\bf Case I.B.} The relevant invariants are 
  \[\textstyle
  \mu_1 = \frac{p_1}{\alpha_1} p_2 = \frac{q_1}{\beta_1} q_3, \quad
  \mu_6 = p_3 = q_3.
  \] 

\begin{comment}
\begin{enumerate}
\item $\mu_1 = \frac{p_1}{\alpha_1} p_2 = \frac{q_1}{\beta_1} q_3$,
\item $\mu_2 = p_2 p_3 = \frac{q_1}{\beta_2} \frac{q_2}{\gamma_2}$,
\item $\mu_3 = \frac{p_1}{\alpha_3} =  \frac{q_1}{\beta_3}$,
\item $\mu_4 = \frac{p_1}{\alpha_4} = \frac{q_2}{\gamma_4}$,
\item $\mu_5 = \frac{p_1}{\alpha_5} =  \frac{q_2}{\gamma_5}$,
\item $\mu_6 = p_3 = q_3$.
\end{enumerate} 
\end{comment}

The two equations together imply that $\frac{p_1}{\alpha_1} p_2 =
\frac{q_1}{\beta_1} p_3$, so $p_3 \mid p_1 p_2$, a contradiction as
the $p_i$ are pairwise relatively prime.

{\bf Case II.A.} The relevant invariants are 
  \[\textstyle
  \mu_1 = \frac{p_1}{\alpha_1} p_2 = \frac{q_1}{\beta_1} q_3, \quad 
  \mu_5 = p_2 = \frac{q_2}{\gamma_5}, \quad 
  \mu_6 = p_3 = \frac{q_2}{\gamma_6}.
  \] 

\begin{comment}
\begin{enumerate}
\item $\mu_1 = \frac{p_1}{\alpha_1} p_2 = \frac{q_1}{\beta_1} q_3$, 
\item $\mu_2 = \frac{p_1}{\alpha_2} p_3 = \frac{q_2}{\gamma_2} q_3$,
\item $\mu_3 = \frac{p_1}{\alpha_3} = \frac{q_1}{\beta_3}$,
\item $\mu_4 = \frac{p_1}{\alpha_4} = \frac{q_1}{\beta_4}$,
\item $\mu_5 = p_2 = \frac{q_2}{\gamma_5}$, so $q_2 = \gamma_5 p_2$,
\item $\mu_6 = p_3 = \frac{q_2}{\gamma_6}$, so $q_2 = \gamma_6 p_3$.
\end{enumerate} 
\end{comment} 

So $q_2 = \gamma_5 p_2$, and $q_2 = \gamma_6 p_3$.  It follows that
$q_2 = p_2 p_3 q_2'$ for some $q_2'$. As $p_1 p_2 p_3 = q_1 q_2 q_3$
it follows that $p_1 = q_1 q_2' q_3$. We now get from the equation for
$\mu_1$ that $\beta_1 (q_1 q_2' q_3) p_2 = \alpha_1 q_1 q_3$, hence
$\beta_1 q_2' p_2 = \alpha_1$. So $p_2$ divides $\alpha_1$ that is a
factor of $p_1$, a contradiction.

{\bf Case II.B.} The relevant invariants are 
  \[\textstyle
  \mu_1 = \frac{p_1}{\alpha_1} p_2 = \frac{q_1}{\beta_1} q_3, \quad
  \mu_5 = p_2 = \frac{q_2}{\gamma_5}, \quad \mu_6 = p_3 = q_3.
  \] 

\begin{comment}
\begin{enumerate}
\item $\mu_1 = \frac{p_1}{\alpha_1} p_2 = \frac{q_1}{\beta_1} q_3$, 
\item $\mu_2 = \frac{p_1}{\alpha_2} p_3 = \frac{q_1}{\beta_2}
  \frac{q_2}{\gamma_2}$, 
\item $\mu_3 = \frac{p_1}{\alpha_3} = \frac{q_1}{\beta_3}$,
\item $\mu_4 = \frac{p_1}{\alpha_4} = \frac{q_2}{\gamma_4}$,
\item $\mu_5 = p_2 = \frac{q_2}{\gamma_5}$, so $q_2 = \gamma_5 p_2$.
\item $\mu_6 = p_3 = q_3$.
\end{enumerate}
\end{comment}

So $q_2 = \gamma_5 p_2$. From $p_1 p_2 p_3 = q_1 q_2 q_3$, and the
equations for $\mu_5$ and $\mu_6$ we get that $p_1 p_2 p_3 = q_1
\gamma_5 p_2 p_3$, hence $p_1 = q_1 \gamma_5$. Substituting in the
equation for $\mu_1$ we get $\beta_1 (q_1 \gamma_5) p_2 = \alpha_1 q_1
p_3$ and further $\beta_1 \gamma_5 p_2 = \alpha_1 p_3$. Hence $p_2$
divides $\alpha_1$ that is a factor of $p_1$, again a contradiction.
\end{proof} 

\begin{ex}\label{S97fail}\rm The hooked family $\calS(9,7)$ cannot be
  realized in $\calB$.
\end{ex} 

\begin{proof} For easy reference 
  \[\textstyle
  \calS(9,7) = 
  \]
  \[
   \{(7,1,1),\, (6,2,1),\, (5,3,1),\,
  (5,2,2),\, (4,4,1),\,(4,3,2),\, (3,3,3),\, (7,2),\, (6,3) ,\,
  (5,4)\}.
  \]
  Suppose that $X \in \calB$ realizes $\calS(9,7)$. Then $X = [\tau_1
  \, \tau_2 \, \tau_3 \, \tau_4 \, \tau_5 \, \tau_6 \, \tau_7] \oplus
  [\tau_1] \oplus [\tau_2]$. The critical types $\tau_3, \ldots,
  \tau_7$ appear only once in decompositions.

  By assumption $X$ realizes $(5,2,2)$. Either one of $\tau_1, \tau_2$
  appears in the rank-$5$ summand, or both do.  We have two cases:
\begin{enumerate} 
\item {\bf Case I.} $X = [\tau_1 \, \tau_2 \, \tau_3 \, \tau_4 \,
  \tau_5] \oplus [\tau_1 \, \tau_6] \oplus [\tau_2 \, \tau_7]$ and
\item {\bf Case II.} $X = [\tau_1 \, \tau_3 \, \tau_4 \, \tau_5 \,
  \tau_6] \oplus [\tau_1 \, \tau_2] \oplus [\tau_2 \, \tau_7]$
\end{enumerate} 

{\bf Case I.} Explicitly we have
\begin{eqnarray*}%\label{}
  X &=& X_1 \oplus X_2 \oplus X_3 \quad \text{where} \\ 
  X_1 &=& [\tau_1 v_1 \oplus \tau_2 v_2 \oplus \tau_3 v_3 \oplus
  \tau_4 v_4 \oplus \tau_5 v_5] + \frac{\Z}{p_1}(\alpha_1 v_1 +
  \alpha_2 v_2 + \alpha_3 v_3 + \alpha_4 v_4 + \alpha_5 v_5), \\  
  X_2 &=& [\tau_1 w_1 \oplus \tau_6 v_6] + \frac{\Z}{p_2}(w_1 + v_6), \\
  X_3 &=& [\tau_2 w_2 \oplus \tau_6 v_7] + \frac{\Z}{p_3}(w_2 + v_7),
\end{eqnarray*}
  and $\tau_1, \tau_6$ are $p_2$--free, and $\tau_2, \tau_7$ are $p_3$--free.
  We can now compute the invariants $\mu_i = \mu_{\tau_i}(X)$. 
  \[\textstyle
  \mu_1 = \frac{p_1}{\alpha_1} p_2, \quad 
  \mu_2 = \frac{p_1}{\alpha_2} p_3, \quad 
  \mu_3 = \frac{p_1}{\alpha_3}, \quad
  \mu_4 = \frac{p_1}{\alpha_4}, \quad 
  \mu_5 = \frac{p_1}{\alpha_5}, \quad 
  \mu_6 = p_2, \quad
  \mu_7 = p_3.
  \]

{\bf Case II.} Explicitly we have
\begin{eqnarray*}%\label{}
  X &=& X_1 \oplus X_2 \oplus X_3 \quad \text{where} \\ 
  X_1 &=& [\tau_1 v_1 \oplus \tau_3 v_3 \oplus \tau_4 v_4 \oplus
  \tau_5 v_5 \oplus \tau_6 v_6] + \frac{\Z}{p_1}(\alpha_1 v_1 +
  \alpha_3 v_3 + \alpha_4 v_4 + \alpha_5 v_5 + \alpha_6 v_6), \\  
  X_2 &=& [\tau_1 w_1 \oplus \tau_2 v_2] + \frac{\Z}{p_2}(w_1 + v_2), \\
  X_3 &=& [\tau_2 w_2 \oplus \tau_7 v_7] + \frac{\Z}{p_3}(w_2 + v_7),
\end{eqnarray*}
  and $\tau_1, \tau_2$ are $p_2$--free, $\tau_2, \tau_7$ are $p_3$--free.
  We can now compute the invariants $\mu_i = \mu_{\tau_i}(X)$. 
  \[\textstyle
  \mu_1 = \frac{p_1}{\alpha_1} p_2, \quad 
  \mu_2 = p_2 p_3, \quad 
  \mu_3 = \frac{p_1}{\alpha_3}, \quad
  \mu_4 = \frac{p_1}{\alpha_4}, \quad 
  \mu_5 = \frac{p_1}{\alpha_5}, \quad
  \mu_6 = \frac{p_1}{\alpha_6}, \quad
  \mu_7 = p_3.
  \]

  Our group $X$ also realizes $(4,4,1)$. There is only one case as
  fully invariant summands are excluded.

\begin{enumerate} 
\item {\bf Case 441. } $X = [\tau_1 \, \tau_2 \, \tau_3 \, \tau_4]
  \oplus [\tau_2 \, \tau_5 \, \tau_6 \, \tau_7] \oplus [\tau_1]$
\end{enumerate} 

  Explicitly, 
\begin{eqnarray*}%\label{}
  X &=& Y_1 \oplus Y_2 \oplus \tau_1 w_1 \quad \text{where} \\
  Y_1 &=& [\tau_1 v_1 \oplus \tau_2 v_2 \oplus \tau_3 v_3 \oplus
  \tau_4 v_4] + \frac{\Z}{q_1}(\beta_1 v_1 + \beta_2 v_2 + \beta_3
  v_3 + \beta_4 v_4), \\ 
  Y_2 &=& [\tau_2 w_2  \oplus \tau_5 v_5 \oplus \tau_6 v_6 \oplus \tau_7 v_7] +
  \frac{\Z}{q_2}(\gamma_2 w_2 + \gamma_5 v_5 + \gamma_6 v_6 + \gamma_7 v_7).
\end{eqnarray*}
  The invariants are
  \[\textstyle
  \mu_1 = \frac{q_1}{\beta_1}, \quad
  \mu_2 = \frac{q_1}{\beta_2} \frac{q_2}{\gamma_2}, \quad
  \mu_3 = \frac{q_1}{\beta_3}, \quad 
  \mu_4 = \frac{q_1}{\beta_4}, \quad
  \mu_5 = \frac{q_2}{\gamma_5}, \quad 
  \mu_6 = \frac{q_2}{\gamma_6}, \quad
  \mu_7 = \frac{q_2}{\gamma_7}.
  \]
\bigskip

{\bf Case I.441.} The invariants are

\begin{enumerate}
\item $\mu_1 = \frac{p_1}{\alpha_1} p_2 = \frac{q_1}{\beta_1}$,
\item $\mu_2 = \frac{p_1}{\alpha_2}  p_3 = \frac{q_1}{\beta_2}
  \frac{q_2}{\gamma_2}$, 
\item $\mu_3 = \frac{p_1}{\alpha_3} = \frac{q_1}{\beta_3}$,
\item $\mu_4 = \frac{p_1}{\alpha_4} = \frac{q_1}{\beta_4}$,
\item $\mu_5 = \frac{p_1}{\alpha_5} = \frac{q_2}{\gamma_5}$,
\item $\mu_6 = p_2 = \frac{q_2}{\gamma_6}$,
\item $\mu_7 = p_3 = \frac{q_2}{\gamma_7}$.
\end{enumerate} 
\medskip

From (6) and (7) it follows that $q_2 = p_2 p_3 p_1'$ for some $p_1'$, 
and $p_1 = q_1 p_1'$. It then follows from (1) that
$\frac{p_1}{\alpha_1} p_2 = \frac{p_1'}{\beta_1}$, so $p_2 \mid  p_1'
\mid p_1$, a contradiction.

This excludes the case I.

$X$ realizes $(3,3,3)$. 
\begin{enumerate} 
\item {\bf Case 333. } $X = [\tau_1 \, \tau_2 \, \tau_3] \oplus
  [\tau_1 \, \tau_4 \, \tau_5] \oplus [\tau_2 \, \tau_6 \, \tau_7]$.
\end{enumerate} 
Explicitly, $X = Y_1 \oplus Y_2 \oplus Y_3$ where 
  \[\textstyle
  Y_1 = [\tau_1 v_1 \oplus \tau_2 v_2 \oplus \tau_3 v_3] +
  \frac{\Z}{q_1} (\beta_1 v_1 + \beta_2 v_2 + \beta_3 v_3),
  \] 
  \[\textstyle
  Y_2 = [\tau_1 w_1 \oplus \tau_4 v_4 \oplus \tau_5 v_5] +
  \frac{\Z}{q_2} (\gamma_1 w_1 + \gamma_4 v_4 + \gamma_5 v_5),
  \] 
  \[\textstyle
  Y_3 =  [\tau_2 w_2 \oplus \tau_6 v_6 \oplus \tau_7 v_7] +
  \frac{\Z}{q_3} (\delta_2 w_2 + \delta_6 v_6 + \delta_7 v_7).
  \] 
{\bf Case II.333. } The invariants are 
\begin{enumerate}
\item $\mu_1 = \frac{q_1}{\beta_1}\frac{q_2}{\gamma_1} =
  \frac{p_1}{\alpha_1} p_2$,
\item $\mu_2 = \frac{q_1}{\beta_2}\frac{q_3}{\delta_2} = p_2 p_3$, 
\item $\mu_3 = \frac{q_1}{\beta_3} = \frac{p_1}{\alpha_3}$,
\item $\mu_4 = \frac{q_2}{\gamma_4} = \frac{p_1}{\alpha_4}$,
\item $\mu_5 = \frac{q_2}{\gamma_5} = \frac{p_1}{\alpha_5}$,
\item $\mu_6 = \frac{q_3}{\delta_6} = \frac{p_1}{\alpha_6}$,
\item $\mu_7 = \frac{q_3}{\delta_7} = p_3$.
\end{enumerate} 

Recall that $q_1 q_2 q_3 = p_1 p_2 p_3$. We have the following
consequences. 
\begin{enumerate}
\item[{[$\mu_7$]}] $q_3 = p_3 \delta_7$. Hence $q_1 q_2 \delta_7 = p_1
  p_2$. 
\item[{[$\mu_1$]}] $q_1 q_2 \alpha_1 = p_1 p_2  \beta_1 \gamma_1 = q_1
  q_2 \delta_7 \beta_1 \gamma_1$, 
  hence $\alpha_1 = \delta_7 \beta_1 \gamma_1$ and further, $\delta_7,
  \beta_1, \gamma_1$ all divide $p_1$.
\item[{[$\mu_2$]}] $q_1 p_3 \delta_7 = q_1 p_3 \delta_7 = p_2 p_3 \beta_2
  \delta_2$, hence $q_1 \delta_7 = p_2 \beta_2 \delta_2$. As $\delta_7
  \mid p_1$, it follows that $\beta_2 \delta_2 = \delta_7 \omega$ for
  some $\omega$ and hence $q_1 = p_2 \omega$.
\item[{[*]}] $q_1 q_2 q_3 = p_1 p_2 p_3$ so $p_2 \omega q_2 p_3 \delta_7 =
  p_1 p_2 p_3$, and $\omega q_2 \delta_7 = p_1$.
\item[{[$\mu_3$]}] $\alpha_3 q_1 = p_1 \beta_3$, so
  $\alpha_3 p_2 \omega = p_1 \beta_3$, $\beta_3 = p_2 \beta_3'$, and 
  $\alpha_3 \omega = p_1 \beta_3'$. Also $\alpha_3 q_1 = \omega q_2
  \delta_7 \beta_3$, hence $\alpha_3 = q_2 \alpha_3'$, and $q_2
  \alpha_3' q_1 = \omega q_2 \delta_7 \beta_3$, so  $\alpha_3' q_1 =
  \omega \delta_7 \beta_3$, $\alpha_3' p_2 \omega = \omega \delta_7
  p_2 \beta_3'$, thus $\alpha_3' = \delta_7 \beta_3'$.
\item[{[$\mu_4$]}] $\alpha_4 q_2 = p_1 \gamma_4 = \omega q_2 \delta_7
  \gamma_4$, so $\alpha_4 = \omega \delta_7 \gamma_4$.
\item[{[$\mu_5$]}] $q_2 \alpha_5 = p_1 \gamma_5 = \omega q_2
  \delta_7\gamma_5$, hence $\alpha_5 = \omega \delta_7 \gamma_5$. 
\item[{[$\mu_6$]}] $\alpha_6 q_3 = p_1 \delta_6 = \omega q_2 \delta_7
  \delta_6$, hence $\alpha_6 = q_2 \alpha_6'$ and $\alpha_6' q_3 =
  \omega \delta_7 \delta_6$ and $\alpha_6' p_3 \delta_7 = \omega
  \delta_7 \delta_6$, so $\alpha_6' p_3 = \omega \delta_6$.
\end{enumerate} 

We summarize the results. 
\[
q_3 = p_3 \delta_7, \quad
q_1 = p_2 \omega, \quad p_1 = \omega q_2 \delta_7.
\]
%\vfill\pagebreak %\medskip
We seek the most general solution to the equations with given values
$p_1, p_2, p_3, q_2, \delta_7, \omega$ where $p_1 = q_2 \delta_7
\omega$ with pairwise relatively prime factors. Then
  \[
  \alpha_1 = \delta_7 \beta_1 \gamma_1 \quad 
  \alpha_3 = q_2 \delta_7 \beta_3' \quad 
  \alpha_4 = \delta_7 \omega \gamma_4 \quad 
  \alpha_5 = \delta_7 \omega \gamma_5 \\ 
  \alpha_6 = q_2 \alpha_6' \\ 
  \] 
We see that four of the five $\alpha$ contain the factor $\delta_7$,
and the Regulator Criterion requires that $\delta_7 = 1$. Using this
fact we have 
  \[\textstyle
  q_3 = p_3 \quad q_1 = p_2 \omega \quad p_1 = \omega q_2 \quad 
  q_1 q_2 = p_1 p_2.
  \] 
  So $\omega \mid q_1$, hence $\gcd(\omega, q_3) = 1$ and
  $\gcd(\omega, p_3) = 1$ as $q_3 = p_3$. It then follows from
  $\alpha_6' p_3 = \omega \delta_6$ ([$\mu_6$]) that $q_3 = p_3 \mid
  \delta_6$ which is a contradiction because this makes $Y_3$
  decomposable.
\end{proof}

\begin{ex} \rm There is a group in $\calB$ realizing $(5,2,2)$,
  $(5,3,1)$, $(6,2,1)$, $(7,1,1)$ and $(4,3,2)$.
  
 Let 
\begin{eqnarray*}
  X &=& [(\tau_1 v_1 \oplus \tau_3 v_3 \oplus \tau_4 v_4 \oplus
  \tau_5 v_5 \oplus \tau_6 v_6) + \frac{\Z}{3 \cdot 5}(v_1 +
  3 v_3 + 3 v_4 + 5 v_5 + 5 v_6)] \\  
   &\oplus& [(\tau_1 w_1 \oplus \tau_2 v_2) + \frac{\Z}{7}(w_1 + v_2)] \\
   &\oplus& [(\tau_2 w_2 \oplus \tau_7 v_7) + \frac{\Z}{11}(w_2 +
   v_7)] \\
  Y &=& [(\tau_1 v_1 \oplus \tau_2 v_2 \oplus \tau_3 v_3 \oplus
  \tau_4 v_4) + \frac{\Z}{7 \cdot 5} (v_1 + 5 v_2 + 7 v_3
  + 7 v_4)] \\  
  &\oplus& [(\tau_1 w_1 \oplus \tau_5 v_5 \oplus \tau_6 v_6)
  + \frac{\Z}{3} (w_1 + v_5 + v_6)] \\
  &\oplus& [(\tau_2 w_2 \oplus \tau_7 v_7)
  + \frac{\Z}{11} (w_2 + v_7)].
\end{eqnarray*}
It is clear the $\R(X) \cong \R(Y)$ and it is easily checked that
$\forall\, i : \mu_i(X) = \mu_i(Y)$. Hence $X \nriso Y$ and $X$
realizes both $(5,2,2)$ and $(4,3,2)$. It follows from \lemref{sums
    of rigid crq groups} that $X$ also realizes $(5,3,1)$, $(6,2,1)$,
and $(7,1,1)$.
\end{ex}

\end{document}